\newtheorem{thm}{Theorem}[section]
\newtheorem{lem}[thm]{Lemma}
\newtheorem{definition}[thm]{Definition}
\newtheorem{example}[thm]{Example}
\newtheorem{prop}[thm]{Proposition}
\newtheorem{cor}[thm]{Corollary}
\newtheorem{rem}[thm]{Remark}
\def\Q{\mathbb{Q}}
\def\F{\mathbb{F}}
\def\R{\mathbb{R}}
\def\Z{\mathbb{Z}}
\def\A{\mathbb{A}}
\def\C{\mathbb{C}}
\def\G{\mathbb{G}}
\def\B{\mathcal{B}}
\def\Fl{\mathcal{F}\ell}
\def\cL{\mathcal{L}}
\def\M{\mathcal{M}}
\def\N{\mathcal{N}}
\def\Hom{\mathrm{Hom}}
\def\Lie{\mathrm{Lie}}
\def\GL{\mathrm{GL}}
\def\PGL{\mathrm{PGL}}
\def\SL{\mathrm{SL}}
\def\Perf{\mathrm{Perf}}
\def\Spa{\mathrm{Spa}}
\def\Spec{\mathrm{Spec}}
\def\Par{\mathrm{Par}}
\def\Bun{\mathrm{Bun}}
\def\Gr{\mathrm{Gr}}
\def\dR{\mathrm{dR}}
\def\Berk{\mathrm{Berk}}
\def\ad{\mathrm{ad}}
\def\adm{\mathrm{adm}}
\def\HN{\mathrm{HN}}
\def\Osc{\mathrm{Osc}}
\def\lan{\langle}
\def\ran{\rangle}
\def\lra{\longrightarrow}
\def\ra{\rightarrow}
\def\ov{\overline}
\def\wh{\widehat}
\def\wt{\widetilde}
\def\st{\stackrel}
\def\tr{\textrm}
\begin{document}

\title{Bruhat-Tits buildings and $p$-adic period domains}
\author{Xu Shen}
\author{Ruishen Zhao}
\date{}
\address{Morningside Center of Mathematics\\
	Academy of Mathematics and Systems Science\\
	Chinese Academy of Sciences\\
	No. 55, Zhongguancun East Road\\
	Beijing 100190, China}
\address{University of Chinese Academy of Sciences, Beijing 100149}
\email{shen@math.ac.cn}

\address{Morningside Center of Mathematics\\
	Academy of Mathematics and Systems Science\\
	Chinese Academy of Sciences\\
	No. 55, Zhongguancun East Road\\
	Beijing 100190, China}
 
\email{zrs13@tsinghua.org.cn}

\renewcommand\thefootnote{}
\footnotetext{2020 Mathematics Subject Classification. Primary: 11E95; Secondary: 14G22.}

\renewcommand{\thefootnote}{\arabic{footnote}}

\begin{abstract}
Let $G$ be a connected reductive group over a $p$-adic local field $F$. R\'emy-Thuillier-Werner constructed embeddings of the (reduced) Bruhat-Tits building $\B(G,F)$ into the Berkovich spaces associated to suitable flag varieties of $G$, generalizing the work of Berkovich in split case. They defined compactifications of $\B(G,F)$ by taking closure inside these Berkovich flag varieties. We show that, in the setting of a basic local Shimura datum, the R\'emy-Thuillier-Werner embedding factors through the associated $p$-adic Hodge-Tate period domain. Moreover, we compare the boundaries of the Berkovich compactification of $\B(G,F)$  with non basic Newton strata. In the case of $\GL_n$ and the cocharacter $\mu=(1^d, 0^{n-d})$ for an integer $d$ which is coprime to $n$, we further construct a continuous retraction map from the $p$-adic period domain to the building. This reveals new information on these $p$-adic period domains, which share many similarities with the Drinfeld spaces.
\end{abstract}

\maketitle
\setcounter{tocdepth}{1}
\tableofcontents

\section{Introduction}

  Let $p$ be a fixed prime number. In this paper, we continue to study the geometry of $p$-adic period domains as initiated from \cite{cfs2021}.  
  More precisely, we compare $p$-adic period domains with the Bruhat-Tits building (\cite{bt1972, bt1984}) of the associated $p$-adic reductive group $G$.  Both are
  candidates of  the so called ``$p$-adic symmetric spaces'' for $G$, although the $p$-adic analogues of symmetric spaces that they show are in different aspects ($p$-adic Hodge theoretic and analytic vs. combinatorial and topological), cf. \cite{RZ96} and \cite{Yu, rtw2010}.  Previously, the only known relation between $p$-adic period domains and Bruhat-Tits buildings was in the Drinfeld case \cite{Dri74, Ber95}. Here we explore the link between these objects in the general setting. Our results shed new lights on both theories.
 \\
 
 Let $F$ be a finite extension of $\Q_p$, $G$ a connected reductive group over $F$, and $\{\mu\}$ the geometric conjugacy class of a minuscule cocharacter $\mu$ of $G$. Attached to the pair\footnote{Here we work with $\mu^{-1}$ temporarily, as later we will mainly work with an opposite parabolic, so we reserve $(G,\{\mu\})$ for later use.} $(G,\{\mu^{-1}\})$,  we have the  flag variety $\Fl(G,\mu^{-1})$ defined over the reflex field $E=E(G,\{\mu\})$, the field of definition of $\{\mu\}$.  Let $\breve{E}$ be the completion of maximal unramified extension of $E$.
 We consider the associated $p$-adic analytic space $\Fl(G,\mu^{-1})^{\tr{ad}}$ over $\breve{E}$, viewed as an adic space in the sense of Huber \cite{Hub96}. Let $b\in G(\breve{F})$ be an element up to $\sigma$-conjugacy (where $\breve{F}$ is defined similarly as $\breve{E}$, and $\sigma$ is the Frobenius for the extension $\breve{F}|F$), then Scholze constructed an open subspace \[\Fl(G, \mu^{-1}, b)^{\adm} \]
 of  $\Fl(G,\mu^{-1})^{\tr{ad}}$, cf. \cite{sw2020} Theorem 22.6.2 and page 220. See also \cite{cfs2021} section 3, where one describes $\Fl(G, \mu^{-1}, b)^{\adm}$ by modifications of $G$-bundles on the Fargues-Fontaine curve. 
 %By construction, there is  a pro-\'etale $\ul{G(F)}$-torsor $\mathbb{P}$ over $\Fl(G, \mu^{-1}, b)^{\adm}$.  
 The space $\Fl(G, \mu^{-1}, b)^{\adm}$ is called the admissible locus, or the $p$-adic period domain attached to $(G, \{\mu^{-1}\}, b)$.
 Here to ensure $\Fl(G, \mu^{-1}, b)^{\adm}\neq \emptyset$, the data $b$ and $\{\mu^{-1}\}$ have to be compatible in the sense that the $\sigma$-conjugacy class $b\in B(G,\mu^{-1})$, cf. \cite{sw2020} Definition\footnote{Note that the sign convention there is different from us.} 24.1.1. Here $B(G,\mu^{-1})$ is the Kottwitz set as introduced in \cite{kott1997} section 6.
 For any open compact subgroup $K\subset G(F)$, there is a rigid analytic space $\M_K$ over $\breve{E}$, together with an \'etale morphism (called the de Rham period morphism) \[\pi_{\dR, K}: \M_K\lra \Fl(G, \mu^{-1}, b)^{\adm}\] which is surjective. By construction, the associated  diamond (in the sense of \cite{Sch17}) $\M_K^\Diamond$ admits an interpretation as a  moduli space of $p$-adic $G$-shtukas, cf. \cite{sw2020} Lectures 11 and 23. 
 %The \'etaleness of the period map $\pi_{\dR,K}$ comes from another interpretation of $\M_K^\Diamond$ as the moduli of pro-\'etale  $\ul{K}$-torsors inside $\mathbb{P}$. 
 If $G=D^\times$ with $D$ the division algebra over $F$ of invariant $\frac{1}{n}$, $\mu^{-1}=(1, \cdots, 1, 0)$, with the uniquely determined $b$, then the admissible locus $\Fl(G, \mu^{-1}, b)^{\adm}$ and the \'etale covers $\M_K$ were studied by Drinfeld in \cite{Dri76} as moduli spaces of $p$-divisible groups with certain additional structure. 
 
 Historically, $p$-adic period domains were studied intensively by Rapoport-Zink in \cite{RZ96} and Dat-Orlik-Rapoport in \cite{DOR10} (see also \cite{Rap, rap1997}), as vast generalizations of the work \cite{Dri76} of Drinfeld. Attached to the above $(G, \{\mu^{-1}\}, b)$, Rapoport and Zink constructed another open subspace
 \[ \Fl(G, \mu^{-1}, b)^{\tr{wadm}}\]
 of $\Fl(G,\mu^{-1})^{\tr{ad}}$, the weakly admissible locus, which parametrizes weakly filtered isocrystals (in the sense of Fontaine) with additional structure in this setting.  The complement of  $\Fl(G, \mu^{-1}, b)^{\tr{wadm}}$ in $\Fl(G,\mu^{-1})^{\tr{ad}}$ is a profinite union of certain Schubert varieties. By construction, we have \[\Fl(G, \mu^{-1}, b)^{\adm}\subset \Fl(G, \mu^{-1}, b)^{\tr{wadm}}. \] Moreover, the two have the same classical points, cf. \cite{cfs2021} Proposition 3.2.
 Assume that $b\in B(G, \mu^{-1})$ is \emph{basic} (cf. \cite{kott1985} section 5). In \cite{cfs2021} Chen, Fargues, and the first author here proved that $\Fl(G, \mu^{-1}, b)^{\adm}=\Fl(G, \mu^{-1}, b)^{\tr{wadm}}\quad \Leftrightarrow\quad  B(G, \mu^{-1})$ is fully Hodge-Newton decomposable.
 Here the right hand side is a group theoretic condition which roughly says that any non basic element $b'\in B(G, \mu^{-1})$ is decomposable with respect to $\{\mu^{-1}\}$. If this condition holds, the admissible locus $\Fl(G, \mu^{-1}, b)^{\adm}$ admits a simpler linear-algebra theoretic description since so does $\Fl(G, \mu^{-1}, b)^{\tr{wadm}}$. However, the classification of fully Hodge-Newton decomposable pairs $(G, \{\mu^{-1}\})$ shows that this condition is rather restrictive. For example, the pair $(G, \{\mu^{-1}\})$ with $G=\GL_5$ and $\mu^{-1}=(1,1,0,0,0)$ is not fully Hodge-Newton decomposable. In this case, we have \[\Fl(G, \mu^{-1}, b)^{\adm}\subsetneq\Fl(G, \mu^{-1}, b)^{\tr{wadm}},\] see \cite{hartl2013} Example 6.7. So far, beyond the fully Hodge-Newton decomposable case, the admissible locus  $\Fl(G, \mu^{-1}, b)^{\adm}$ is still very mysterious.
 
 Assume that $b$ is \emph{basic} as above. Let $G_b$ be the reductive group over $F$ defined as the $\sigma$-centralizer of $b$, which is then an inner form of $G$ as $b$ is basic. The group $G_b(F)$ acts naturally on $
 \Fl(G,\mu^{-1})^{\tr{ad}}$ by the inclusion $G_b(F)\subset G(\breve{E})$. This $G_b(F)$-action stabilizes $\Fl(G, \mu^{-1}, b)^{\adm}$. One of the main results of this paper roughly says that, we can find the Bruhat-Tits building of $G_b$ \[\B(G_b, F)\] ``inside'' the admissible locus $\Fl(G, \mu^{-1}, b)^{\adm}$.
 \\
 
Let us describe our results more precisely. We slightly change our perspective.  Consider the Bruhat-Tits building $\B(G, F)$ and the Berkovich space $\Fl(G, \mu)^\Berk$ (\cite{Ber90, Ber93}) attached to the flag variety $\Fl(G, \mu)$ over $E$. Here, it is natural to work with Berkovich flag variety $\Fl(G, \mu)^\Berk$,  as the underlying topological space is Hausdorff and compact, thus more suitable to be compared with the building $\B(G, F)$, which is  locally compact and Hausdorff.
In \cite{rtw2010} R\'emy-Thuillier-Werner constructed a continuous map\footnote{Note that we need the most subtle case of the construction in \cite{rtw2010}:  the flag variety $\Fl(G, \mu)$ is not necessary of rational type over $E$, and we need to work with the building over the subfield $F$ of $E$.} of topological spaces
\[\theta: \B(G, F)\lra  \Fl(G, \mu)^\Berk.\]
Roughly, one can view $\B(G, F)\subset \B(G, E)$ (see \cite{rtw2010} 1.3.4), and for any point $x\in \B(G, E)$, by \cite{rtw2010} Theorem 2.1 there is an affinoid subgroup \[G_x\subset G_E^\Berk\] which admits a unique Shilov boundary point $\wt{\theta}(x)$. Then one defines $\theta(x)\in \Fl(G, \mu)^\Berk$ as the ``projection''\footnote{More precisely, if $G$ is quasi-split over $E$, then one can take an $E$-rational point of $\Fl(G, \mu)$ to identify  $\Fl(G, \mu)=G_E/P_\mu$; the definition of $\theta$ does not depend on the choice of the  $E$-rational point, cf. \cite{rtw2010} Proposition 2.4. In the general case, one first works over an extension $E'|E$ which splits $G$, then uses some descent argument to back to $E$, cf. \cite{rtw2010} 2.4.4.} of $\wt{\theta}(x)$.  This construction generalizes the previous work of Berkovich \cite{Ber90}, who only treated split groups over $F$ and used a different approach. If the conjugacy class $\{\mu\}$ is non-degenerate, in the sense that it is non trivial on each $F$-quasi simple factors of $G$, then the map $\theta$ is an embedding of topological spaces, cf. \cite{rtw2010} Proposition 3.29.

On the other hand, in \cite{cs2017} Caraiani-Scholze defined the Newton stratification on the adic space $\Fl(G, \mu)^\ad$ over $E$ indexed by $B(G,\mu^{-1})$, which factors through $\Fl(G, \mu)^\Berk$. Thus we have a decomposition of $\Fl(G, \mu)^\Berk$ into locally closed subspaces \[\Fl(G, \mu)^\Berk=\coprod_{b\in B(G,\mu^{-1})}\Fl(G,\mu)^b,\] with the unique open stratum $\Fl(G,\mu)^{b_0}$ given by the basic element $b_0\in B(G,\mu^{-1})$, which we call the Hodge-Tate period domain for the local Shimura datum $(G, \{\mu^{-1}\}, b_0)$, since it serves as the target of the Hodge-Tate period morphism for the local Shimura variety at infinite level (cf. \cite{shen2023}). Here one can again recover the case of Drinfeld space: by considering $G=\GL_n$ and $\mu=(1,0,\cdots, 0)$, the space $\Fl(G,\mu)^{b_0}$ is the $n-1$ dimensional Drinfeld space over $F$.

Now,  the natural question is that which Newton strata $\Fl(G,\mu)^b$ contain the image $\theta(\B(G,F))$ of the Bruhat-Tits building under the R\'emy-Thuillier-Werner map? It turns out this has a pleasant answer.
\begin{thm}[Theorem \ref{thm BT vs p-adic period}]\label{thm 1.1}
The continuous map $\theta$ factors through the open Newton stratum $\Fl(G,\mu)^{b_0}$, i.e. $\theta(\B(G,F))\subset \Fl(G,\mu)^{b_0}$.
\end{thm}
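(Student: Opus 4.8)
The plan is to combine the $G(F)$-equivariance of both objects with a reduction to a single apartment, on which $\theta$ is explicit. Both $\theta$ and the Newton stratification are equivariant for the action of $G(F)$ --- acting on $\B(G,F)$ through the building action and on $\Fl(G,\mu)^{\Berk}$ through $G(F)\subset G(E)$ by left translation. For $\theta$ this is part of the construction in \cite{rtw2010}. For the Newton stratification, a point $z$ of $\Fl(G,\mu)^{\ad}$ with values in an algebraically closed perfectoid field $C$ amounts, since $\mu$ is minuscule, to a modification of type $\mu$ of the trivial $G$-bundle on the Fargues--Fontaine curve $X_C$, and the associated Newton class in $B(G,\mu^{-1})$ is the isomorphism class of the modified bundle; an element $g\in G(F)$, acting as an automorphism of the trivial $G$-bundle on $X_C$, sends the modification attached to $z$ to the one attached to $gz$ and so preserves the resulting isomorphism class. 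Hence $\theta^{-1}\big(\Fl(G,\mu)^{b_0}\big)$ is $G(F)$-stable, and since $G(F)$ acts transitively on apartments and every point of $\B(G,F)$ lies in an apartment, it suffices to prove $\theta\big(A(S,F)\big)\subset\Fl(G,\mu)^{b_0}$ for the apartment $A(S,F)$ of a fixed maximal $F$-split torus $S$.

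Next I recall the shape of $\theta$ on $A(S,F)$. By the explicit description of the Berkovich--R\'emy--Thuillier--Werner map on apartments (\cite{Ber90} in the split case, \cite{rtw2010} in general), for $x\in A(S,F)$ the point $\theta(x)$ lies in a fixed big cell $\simeq\A^N_E$ of $\Fl(G,\mu)$ stable under $S$, where it is the \emph{monomial} seminorm whose weights on the linear coordinates are prescribed by pairing $x$ with the corresponding roots of $S$. A monomial seminorm is multiplicative --- its associated graded ring is a domain --- so it has zero kernel; equivalently, the image of $\theta(x)$ under the canonical map $\Fl(G,\mu)^{\Berk}\to\Fl(G,\mu)$ is the generic point $\eta$ of the irreducible scheme $\Fl(G,\mu)$. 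Only this last fact about $\theta(x)$ will be used.

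It therefore remains to show that any point $z$ of $\Fl(G,\mu)^{\Berk}$ whose image under $\Fl(G,\mu)^{\Berk}\to\Fl(G,\mu)$ is the generic point $\eta$ lies in the open stratum $\Fl(G,\mu)^{b_0}$. The complement of $\Fl(G,\mu)^{b_0}$ in $\Fl(G,\mu)^{\Berk}$ is the closed union of the non-basic strata. For $z$ in a non-basic stratum $\Fl(G,\mu)^{b'}$, the corresponding modification $\mathcal{E}_z$ of the trivial $G$-bundle on the Fargues--Fontaine curve is not semistable; its canonical Harder--Narasimhan reduction, combined with the fact that $\mathcal{E}_z$ is a one-point modification of the trivial bundle, forces $z$ into a prescribed relative position with respect to a parabolic reduction of the trivial $G$-bundle defined over a $p$-adic field --- which is how the non-basic strata are cut out by Hodge--Newton data, cf.\ \cite{cs2017}. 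Consequently the image of $z$ in $\Fl(G,\mu)$ lies on a proper Zariski-closed subset defined over a $p$-adic field, hence is not $\eta$. Therefore no point over $\eta$ lies in a non-basic stratum, and, with the first two steps, $\theta(\B(G,F))\subset\Fl(G,\mu)^{b_0}$.

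The main obstacle is this last step: making precise and proving that the non-basic strata are confined to preimages of proper Zariski-closed subsets defined over $p$-adic fields --- equivalently, that non-semistability of a modification of the trivial $G$-bundle is always ``witnessed over $\breve{E}$''. This is the mechanism that, in the Drinfeld case $G=\GL_n$, $\mu=(1,0,\dots,0)$, exhibits the complement of $\Fl(G,\mu)^{b_0}=\Omega^{n-1}$ as a profinite union of $F$-rational hyperplanes; in general it should be read off from the Harder--Narasimhan theory of $G$-bundles on the Fargues--Fontaine curve together with the Caraiani--Scholze description of the Newton strata. A more computational alternative bypasses this step: compute $\mathcal{E}_{\theta(x)}$ directly from the affinoid subgroup $G_x$ of \cite{rtw2010} and verify semistability through the Harder--Narasimhan slope inequalities --- for $\GL_n$ this reduces to checking that a monomial seminorm satisfies the strict inequalities cutting out the admissible locus --- but the equivariance argument above is cleaner.
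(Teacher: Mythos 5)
Your first two steps are fine (the $G(F)$-equivariance of the Newton stratification and the reduction to an apartment, and the fact that $\theta(x)$ has support the generic point $\eta$ of $\Fl(G,\mu)$ --- modulo the caveat that the explicit monomial description holds after base change to a splitting field, which the functoriality of $\theta$ handles). The gap is your third step, which you yourself flag as the main obstacle: the claim that every non-basic Newton stratum is contained in the preimage of a proper Zariski-closed subset defined over a $p$-adic field is neither proved nor available from \cite{cs2017}, and the mechanism you sketch for it does not work. The Harder--Narasimhan reduction of the modified bundle $\mathcal{E}_z$ is a reduction of $\mathcal{E}_z$, not of the trivial bundle; restricting it away from $\infty$ gives a reduction of the trivial $G$-bundle by a sub-object of possibly negative degree, and such reductions (e.g.\ $\mathcal{O}(-1)\subset\mathcal{O}^2$) are parametrized by huge Banach--Colmez-type spaces and are \emph{not} defined over $p$-adic fields. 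Only the slope-zero, rational reductions are, and those cut out the Harder--Narasimhan (weakly admissible) stratification, not the Newton one. So your argument establishes exactly the weaker statement $\theta(\B(G,F))\subset \Fl(G,\mu)^{\Berk,\HN=b_0}$ (the paper's Theorem \ref{thm BT vs ss}, proved by the same ``support $=\eta$ avoids all Schubert varieties'' reasoning), but it does not reach the Newton statement: since $\Fl(G,\mu)^{b_0}\subsetneq\Fl(G,\mu)^{\HN=b_0}$ in general (e.g.\ $\GL_5$, $\mu^{-1}=(1,1,0,0,0)$, \cite{hartl2013}), ``non-semistability is witnessed over $\breve{E}$'' is false for the Newton stratification, and the paper's Remark after Theorem \ref{thm BT vs p-adic period} makes clear that upgrading the semistable statement by genericity of the point alone would require (a case of) Hartl's conjecture, which is open.

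The paper closes this gap with a different invariant: not the support of $\theta(x)$, but the transcendence degree of the residue field $\wt{\mathcal{H}(\theta(x))}$ over the residue field of the (large) base field, which for the Gauss-type points equals $N=\dim\Fl(G,\mu)$. By the dimension formula for locally spectral spaces (\cite{ext2022}), the closure of the lifted point in the adic space then has dimension $N$, it lies entirely in one Newton stratum because the stratification factors through the Berkovich quotient, and the dimension formula $\dim\Fl(G,\mu)^{\ad,b}=\lan\mu-\nu(b),2\rho\ran<N$ for non-basic $b$ (Proposition \ref{prop Newton}) forces the stratum to be basic. Note that ``support $=\eta$'' alone does not give this dimension bound: already on the disc there are points over the generic point whose residue field extension is algebraic (type 3/4 points), so your genericity input is strictly weaker than the one the paper exploits. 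To repair your proof you would either have to prove the confinement claim for non-basic Newton strata (which would be a new and strong result about the admissible locus) or replace step 3 by the transcendence-degree/dimension argument above.
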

One key point in the proof is the base change functoriality of the map $\theta$: for any non-archimedean extension $K|E$, the following diagram
\[\xymatrix{
	\B(G,K)\ar^{\theta_K}[r] & \Fl(G,\mu)^\Berk_K\ar[d]^{\tr{pr}_{K|E} } \\
	\B(G,F)\ar[r]^\theta \ar@{^{(}->}[u] & \Fl(G, \mu)^\Berk
}\]
commutes. This is also the key property of the map $\theta$ in the construction in \cite{rtw2010}. For any point $x\in \B(G,F)$, let $x_K\in \B(G,K)$ be its image under the embedding $\B(G,F)\hookrightarrow \B(G,K)$, and denote $y=\theta_K(x_K)$. The next key point is that one can compute $y$ explicitly for large enough $K$. Let $\mathcal{H}(y)$ be the complete residue field at $y$, and $\wt{\mathcal{H}(y)}$ the residue field in characteristic $p$ of $\mathcal{H}(y)$. By the explicit computation we get \[\tr{tr.deg}(\wt{\mathcal{H}(y)}|\wt{K})=\dim\,\Fl(G, \mu),\] where $\wt{K}$ is the residue field of $K$. Then we work in adic spaces and consider the quotient map $\Fl(G, \mu)_K^\ad\ra \Fl(G, \mu)^\Berk_K$. We view $y$ as a point of $\Fl(G, \mu)_K^\ad$ (by the natural discontinuous inclusion from the Berkovich space into adic space) and let $\ov{\{y\}}$ be its closure. By some arguments on dimensions of locally spectral spaces (for example see \cite{ext2022}), we get \[\dim\,\ov{\{y\}}=\tr{tr.deg}(\wt{\mathcal{H}(y)}|\wt{K}),\] which is thus $\dim\,\Fl(G, \mu)$. By construction, $\ov{\{y\}}$ entirely lie in some Newton stratum, which has to be the open stratum $\Fl(G, \mu)^{b_0}_K$ by the dimension formula of Newton strata, cf. Proposition \ref{prop Newton}. Roughly speaking, for any $x\in \B(G,F)$, the associated analytic point $\theta(x)\in \Fl(G, \mu)^\Berk_K$ is ``very generic'' in some sense, so that it can not lie in non basic Newton strata. Thus, one sees that Theorem \ref{thm 1.1} holds essentially due to a topological reason.

Even better, we can compare the boundaries of $\theta(\B(G,F))$ and $\Fl(G,\mu)^{b_0}$. Let $t$ be the type\footnote{See \cite{rtw2010} 1.1.3, which means a connected component of the total flag variety $\Par(G)$} of the flag variety $\Fl(G,\mu)$ and denote $\B_t(G,F)=\theta(\B(G,F))$. Consider the closure $\ov{\B_t(G,F)}$ of $\B_t(G,F)$ inside the compact Hausdorff space $\Fl(G, \mu)^\Berk$. If $\{\mu\}$ is non-degenerate, the space $\ov{\B_t(G,F)}$ defines a compactification of the building $\B(G,F)$. By \cite{rtw2010} Theorem 4.1, we have a stratification 
\[\overline{\B_t(G,F)}= \coprod_{\substack{Q\in \Par(G)(F)\\ Q\; \tau\tr{-relevant}}} \B_\tau(Q_{ss},F),\]
where $\tau$ is an $F$-rational type uniquely determined by $\{\mu\}$ (or $t$),  $Q$ runs over the set of $\tau$-relevant $F$-parabolic subgroups of $G$, and $Q_{ss}$ is the Levi quotient of $Q$. For the notion of $\tau$-relevant parabolic subgroups, see \cite{rtw2010} subsection 3.2 or our subsection 2.2.
For $Q=G$,  $\B_\tau(Q_{ss},F)=\B_t(G, F)$. 
The natural $G(F)$-action on $\B(G,F)$ extends uniquely to an action on $\overline{\B_t(G,F)}$. For a proper $Q\subsetneq G$, one may ask which non basic Newton strata contain the boundary stratum $\B_\tau(Q_{ss},F)$?
\begin{thm}[Theorem \ref{thm boundaries}]\label{thm 1.2}
With the above notations, we have 
\[ \B_\tau(Q_{ss},F)\subset \Fl(G,\mu)^b,\]
where let $M_Q$ be the Levi subgroup of $Q$, then $b\in B(G,\mu^{-1})$ is the image of the basic element $b_{M_Q,0}\in B(M_Q,\mu^{-1})$ under the map $B(M_Q,\mu^{-1})\ra B(G,\mu^{-1})$ induced by the inclusion $M_Q\subset G$.  

Moreover, one can characterize all $b\in B(G,\mu^{-1})$ such that the corresponding Newton stratum $\Fl(G,\mu)^b$ contains a boundary stratum of $\B_t(G, F)$. 
\end{thm}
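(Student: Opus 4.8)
The plan is to reduce the first assertion to Theorem~\ref{thm 1.1} applied to the Levi $M_Q$ in place of $G$, together with the functoriality of the Caraiani--Scholze Newton stratification under a closed immersion of flag varieties. The starting point is the precise description of a boundary stratum inside $\Fl(G,\mu)^\Berk$ furnished by \cite{rtw2010} \S4: for a $\tau$-relevant $F$-parabolic $Q$ with Levi $M_Q$, the relevance condition produces a well-defined minuscule $M_Q$-conjugacy class $\{\mu_{M_Q}\}$ lying over $\{\mu\}$ and a closed immersion $\iota_Q\colon\Fl(M_Q,\mu_{M_Q})\hookrightarrow\Fl(G,\mu)$ which, after replacing $\mu$ by a suitable conjugate and choosing compatible Borel subgroups, is the morphism of flag varieties induced by $M_Q\subset G$ --- concretely, $\Fl(M_Q,\mu_{M_Q})^\ad$ is a connected component of the fixed locus of a cocharacter defining $Q$, and $\iota_Q$ is $M_Q$-equivariant. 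Since $\theta$ is functorial under passing to Levi subgroups, $\iota_Q^\Berk$ identifies
\[ \B_\tau(Q_{ss},F)=\iota_Q^\Berk\!\big(\theta_{M_Q}(\B(M_Q,F))\big)\subset\Fl(G,\mu)^\Berk, \]
where we use the canonical isomorphism $Q_{ss}\cong M_Q$. Reading this datum --- in particular the class $\{\mu_{M_Q}\}$, that is, the restriction of $\{\mu\}$ to the $\tau$-relevant Levi --- carefully off of \cite{rtw2010} is the step I expect to be the main obstacle.

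Granted this, the first assertion follows from two functoriality statements. By Theorem~\ref{thm 1.1} applied to $(M_Q,\{\mu_{M_Q}\})$ we have $\theta_{M_Q}(\B(M_Q,F))\subset\Fl(M_Q,\mu_{M_Q})^{b_{M_Q,0}}$, the basic (hence open) Newton stratum. On the other hand the Newton stratification of \cite{cs2017} is functorial along $\iota_Q$: a point of $\Fl(M_Q,\mu_{M_Q})$, viewed as a $\mu_{M_Q}$-modification of the trivial $M_Q$-bundle on the Fargues--Fontaine curve, is sent by $\iota_Q$ to the induced $\mu$-modification of the trivial $G$-bundle, which comes with the corresponding reduction to $M_Q$; hence $\iota_Q$ carries $\Fl(M_Q,\mu_{M_Q})^{b'}$ into the stratum of $\Fl(G,\mu)$ indexed by the image of $b'$ under $B(M_Q,\mu^{-1})\to B(G,\mu^{-1})$, for every $b'$. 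Combining the two, $\B_\tau(Q_{ss},F)\subset\Fl(G,\mu)^b$ with $b$ the image of $b_{M_Q,0}$, as claimed. One could also argue directly in the style of the proof of Theorem~\ref{thm 1.1}: for $\xi\in\B_\tau(Q_{ss},F)$ pass to a large non-archimedean extension $K|E$, compute $\xi_K=\theta_{M_Q,K}(x_K)$ as in loc.\ cit.\ but for $M_Q$, deduce $\mathrm{tr.deg}(\widetilde{\mathcal H(\xi_K)}\,|\,\widetilde{K})=\dim\Fl(M_Q,\mu_{M_Q})$; since $\xi_K$ lies on the closed subspace $\Fl(M_Q,\mu_{M_Q})^\ad_K$, its closure there has full dimension, hence lies in the basic stratum by the dimension formula for Newton strata (Proposition~\ref{prop Newton} applied inside $\Fl(M_Q,\mu_{M_Q})$), and one then transports along $\iota_Q$.

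For the last assertion, \cite{rtw2010} Theorem~4.1 identifies the boundary strata of $\overline{\B_t(G,F)}$ with the nonempty spaces $\B_\tau(Q_{ss},F)$ for $Q\subsetneq G$ a proper $\tau$-relevant $F$-parabolic; hence $\Fl(G,\mu)^b$ contains a boundary stratum exactly when $b$ is the image of the basic class $b_{M_Q,0}$ for some such $Q$. Since that image depends only on the $G(F)$-conjugacy class of the pair $(M_Q,\{\mu_{M_Q}\})$, the task reduces to classifying the $\tau$-relevant parabolics and their classes $\{\mu_{M_Q}\}$ (\cite{rtw2010} \S3.2), which one then carries out. It becomes completely explicit in the case of interest: for $G=\GL_n$ and $\mu=(1^d,0^{n-d})$, a proper $F$-parabolic is $\tau$-relevant precisely when it is the stabilizer of a nonzero proper subspace $W$ (of any dimension), or of a flag $0\subsetneq W_1\subsetneq W_2\subsetneq\F^n$ with $\dim W_1<d<\dim W_2$; in both situations $\iota_Q$ is the inclusion of a smaller Grassmannian, and one computes that $\Fl(G,\mu)^b$ contains a boundary stratum if and only if $b$ is non-basic and the Newton polygon of $b$ has at most one slope in the open interval $(0,1)$ (equivalently, $b$ is the image of the basic class of a Levi $\GL_a\times\GL_m\times\GL_c$ with $\mu_{M_Q}$ central on the outer factors). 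In particular, already for $\GL_5$ with $\mu=(1,1,0,0,0)$ the stratum with Newton slopes $(\tfrac12,\tfrac12,\tfrac13,\tfrac13,\tfrac13)$ contains no boundary stratum (compare the discussion of full Hodge--Newton decomposability in \cite{cfs2021}).
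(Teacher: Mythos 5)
Your proposal is correct and follows essentially the same route as the paper's proof: identify each boundary stratum, via the osculatory-variety description of \cite{rtw2010} (the paper's Proposition \ref{levistrata}), with the image of $\theta_{M_Q}$ inside an embedded flag variety $\Fl(M_Q,\mu_{M_Q})\hookrightarrow\Fl(G,\mu)$, apply Theorem \ref{thm 1.1} to $(M_Q,\{\mu_{M_Q}\})$ to land in the basic stratum there, and transport along the functoriality of the Newton stratification (the paper's Lemma \ref{lem newton sets} and Proposition \ref{natural}), with the ``Moreover'' part answered by exactly the classes so obtained (the paper's strongly regular elements). Your explicit $\GL_n$ classification of the $\tau$-relevant parabolics and of the resulting strata (non-basic, at most one slope in the open interval $(0,1)$) is consistent with, and slightly more explicit than, the paper's discussion, which records the criterion that every break point of $\nu(b)$ lie on the Hodge polygon and the count $d(n-d)+1$.
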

The proof of this theorem is based on Theorem \ref{thm 1.1}, and the functoriality of the Newton stratification, cf. Lemma \ref{lem newton sets} and Proposition \ref{natural}. Here, the non basic elements $b\in B(G,\mu^{-1})$ such that the corresponding Newton stratum $\Fl(G,\mu)^b$ contains a boundary stratum of $\B_t(G, F)$ are characterized in Definition \ref{def SR elements}, which we call strongly regular elements. It turns out that for quasi-split $G$, strongly regular elements are Hodge-Newton decomposable with respect to $\{\mu^{-1}\}$. In general, a strongly regular element coming from a proper Levi is non basic, cf. Lemma \ref{lem SR non basic}.  As a corollary, the subset $\B_t(G,F)$ is  closed in $\Fl(G, \mu)^{b_0}$. If $G$ is quasi-split, it is also closed in the larger semistable locus\footnote{This is the analogue of the previous weakly admissible locus $\Fl(G, \mu^{-1}, b_0)^{\tr{wadm}}$, for more information about the precise relation, see \cite{shen2023}.} $\Fl(G, \mu)^{\HN=b_0}$, which is the maximal (open) stratum of the Harder-Narasimhan stratification on $\Fl(G, \mu)^\Berk$. For more information of the  Harder-Narasimhan stratification, see \cite{DOR10} and \cite{shen2023}. Here to prove\footnote{In fact, by a different approach it may be possible to prove the stronger statement without the quasi-split assumption, cf. Remark \ref{rem stronger closed}.} the second stronger statement, we apply Theorem 1.3 of \cite{eva2024}.

Consider the case $G=\GL_n$ and $\mu=(1^d,0^{n-d})$ with $(d,n)=1$.  In this case, 
we can construct a retraction map for the embedding $\theta: \B(G,F)\hookrightarrow \Fl(G,\mu)^{b_0}$.
\begin{thm}[Theorem \ref{thm retract continuous}, Theorem \ref{retract}, Proposition \ref{prop retract Drinfeld}]\label{thm 1.3}
Under the above assumptions, there exists a continuous map
\[r:  \Fl(G,\mu)^{b_0}\lra \B(G,F)\] such that $r\circ \theta=\tr{Id}$. Moreover, for $d=1$, this map $r$ coincides with the Drinfeld retraction map as constructed in \cite{Dri74} using an explicit formula by restriction of norms (see also \cite{Ber95} and \cite{DOR10} XI.3).
\end{thm}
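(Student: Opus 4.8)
The plan is to construct $r$ first on classical points, then to propagate it to the whole Berkovich space, and to verify continuity at the end. Fix $C|\breve E$ complete and algebraically closed and a point $x\in\Fl(G,\mu)^{b_0}(C)$, which we think of as a $d$-dimensional subspace $W\subset C^n$ together with the resulting modification $\iota_x\colon\mathcal{O}_X^n\hookrightarrow\mathcal{E}_x$ of the trivial bundle on the Fargues--Fontaine curve $X$, of type $\mu$ at $\infty$. That $x$ lies in the basic stratum means exactly that $\mathcal{E}_x$ is semistable, and since $(d,n)=1$ this forces $\mathcal{E}_x\cong\mathcal{O}_X(d/n)$, the \emph{stable} bundle, with $\underline{\mathrm{Aut}}(\mathcal{O}_X(d/n))=D^\times$ and $D$ the central division algebra over $F$ of invariant $\pm d/n$ --- this is the input that makes the $(d,n)=1$ case behave like Drinfeld's. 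I would then extract from the pair $\big(\iota_x,\ \mathcal{E}_x\cong\mathcal{O}_X(d/n)\big)$ a homothety class of norms on $F^n$: the composite $F^n=H^0(X,\mathcal{O}_X^n)\hookrightarrow H^0(X,\mathcal{E}_x)\xrightarrow{\sim}H^0(X,\mathcal{O}_X(d/n))$ (the last map canonical up to $D^\times$) lands in the natural $F$-Banach space $H^0(X,\mathcal{O}_X(d/n))$, and pulling back along it the canonical $D^\times$-invariant-up-to-homothety norm on that Banach space --- obtained by averaging over the compact group $\mathcal{O}_D^\times$ and using that $\Pi_D$ scales the averaged norm by a fixed constant --- produces a norm $r(x)$ on $F^n$ depending only on $x$, up to scaling. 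Since $F$ is complete discretely valued, hence spherically complete, every norm on $F^n$ is splittable, so $r(x)\in\B(G,F)$ automatically; and the construction is visibly equivariant for $G(F)=\GL_n(F)=\mathrm{Aut}(\mathcal{O}_X^n)$ acting on $F^n=H^0(\mathcal{O}_X^n)$ and on $\B(G,F)$ in the usual way.

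Next I would verify $r\circ\theta=\mathrm{Id}$ by a direct computation. By Theorem \ref{thm 1.1} the map $r\circ\theta$ is defined on all of $\B(G,F)$, and by equivariance it is enough to evaluate it on one point of each apartment. Feeding into the recipe above the explicit description of $\theta$ --- for a splittable norm $\gamma$ with orthogonal basis $(e_i)$ and weights $(c_i)$, $\theta(\gamma)$ is a Gauss-type point of a standard affine chart of $\mathrm{Gr}(d,n)$ with polyradii prescribed by the ratios $c_i/c_j$ --- and using the base change functoriality of $\theta$ and of the Newton stratification to work over a large field where $\theta(\gamma)$ is an honest generic $d$-plane, one recovers $\gamma$ (up to the standard self-duality of $\B(G,F)$). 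For $d=1$ one has $\mathrm{Gr}(1,n)=\mathbb{P}^{n-1}$ and $\Fl(G,\mu)^{b_0}=\Omega^{n-1}$, and the recipe unwinds through the fundamental exact sequence to the assignment $W=Cw\mapsto\big(\xi\mapsto|\xi(w)|\big)$, i.e.\ to Drinfeld's formula by restriction of norms; its identification with the retraction onto the skeleton of Deligne's semistable model of $\widehat{\Omega}^{n-1}$ is recalled in \cite{Ber95} and \cite{DOR10}~XI.3. This gives Theorem \ref{retract} and Proposition \ref{prop retract Drinfeld}.

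Finally, for Theorem \ref{thm retract continuous}, I would extend $r$ to an arbitrary point $y\in\Fl(G,\mu)^{b_0}$ by running the classical construction over $C=\widehat{\overline{\mathcal{H}(y)}}$ (which still lies in the basic stratum, the latter being stable under such base change) and checking that the resulting norm descends to $F^n$ independently of auxiliary choices; then I would prove continuity by working in a standard affinoid chart of $\mathrm{Gr}(d,n)$, where the ``periods'' entering $r(y)$ --- the coordinates of a lift $\widetilde W$, equivalently the images of $F^n$ in $H^0(\mathcal{O}_X(d/n))$ --- are convergent analytic functions of $y$ whose absolute values therefore vary continuously, which suffices since the topology of $\B(G,F)$ is generated on each apartment by the maps $\mu\mapsto\mu(a)$, $a\in F^n$. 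Alternatively one might reduce continuity to the Drinfeld case through an auxiliary morphism. I expect this continuity statement to be the main obstacle: $r(y)$ is manufactured by a transcendental $p$-adic Hodge-theoretic procedure over the varying residue fields $\mathcal{H}(y)$, and controlling its variation --- particularly towards the boundary, where by Theorem \ref{thm 1.2} the point degenerates into non-basic strata and $r(y)$ escapes to infinity along an apartment --- requires uniform estimates; a secondary point to pin down is the well-definedness up to homothety, resting on the compactness of $\mathcal{O}_D^\times$ and the constancy of the $\Pi_D$-scaling factor.
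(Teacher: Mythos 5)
Your route is genuinely different from the paper's: the paper never touches the Fargues--Fontaine-theoretic description of points at all, but instead works through GIT following van der Put--Voskuil and Voskuil --- it first shows $\Fl(G,\mu)^{b_0}\subset \Fl(G,\mu)^{\HN=b_0}=X^{ss}=X^s$ (using $(d,n)=1$), constructs for each apartment $A$ a retraction $r_A\colon X(T,\cL)^{s,\Berk}\to A$ out of the integral geometric quotient $X(T,\cL)^s/T$ over $O_F$, proves continuity of each $r_A$ and a compatibility lemma for two apartments overlapping near an interior point, and then glues the fibers $Y_z=\bigcap_{z\in A}r_A^{-1}(z)$ into a continuous $G(F)$-equivariant map $r\colon X^s\to\B(G,F)$. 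Your proposal replaces all of this by a single $p$-adic Hodge-theoretic formula, but the two places where it is only asserted are exactly where the difficulty lives. First, well-definedness: you need a norm on the infinite-dimensional Banach--Colmez space $H^0(X,\mathcal{O}_X(d/n))$ that is $D^\times$-invariant up to homothety, since the trivialization $\mathcal{E}_x\cong\mathcal{O}_X(d/n)$ is only canonical up to $D^\times$. Averaging over the compact group $\mathcal{O}_D^\times$ produces an $\mathcal{O}_D^\times$-invariant norm, but your claim that $\Pi_D$ rescales it by a fixed constant does not follow: conjugation by $\Pi_D$ preserves $\mathcal{O}_D^\times$, so the transported norm is again $\mathcal{O}_D^\times$-invariant, and without a uniqueness-up-to-scalar statement for invariant norms (not available on an infinite-dimensional space) there is no reason it is proportional to the original one. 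Without genuine $D^\times$-invariance up to scalar, $r(x)$ depends on the chosen trivialization and is not defined.

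Second, continuity, which you yourself flag as the main obstacle, is not merely a matter of "periods being convergent analytic functions on an affinoid chart": a trivialization $\mathcal{E}_y\cong\mathcal{O}_X(d/n)$ does not vary analytically, or even continuously, over $\Fl(G,\mu)^{b_0}$ --- it exists only after pullback along the pro-\'etale $D^\times$-torsor $\M_\infty\to\Fl(G,\mu)^{b_0}$ given by the Hodge--Tate period map. So the quantities entering your $r(y)$ are at best functions on $|\M_\infty|$, and descending their continuity to the period domain requires precisely the invariance of the first gap plus a topological-quotient argument, none of which is supplied; the degeneration towards non-basic strata that worries you is a symptom of this, not a separate estimate one can add at the end. (The verification of $r\circ\theta=\mathrm{Id}$ also needs an actual computation of the modification and a trivialization at the transcendental Gauss-type points $\theta(\gamma)$, which you only gesture at.) The paper sidesteps all of this: its apartment retractions are defined by finite-dimensional GIT over $O_F$, their continuity (Proposition \ref{prop continuous retract torus}), the compatibility of different apartments (Lemma \ref{lemma compatibility}) and the continuity of the glued map (Theorem \ref{thm retract continuous}) are proved using the local compactness and simplicial structure of $\B(G,F)$, and $r\circ\theta=\mathrm{Id}$ (Theorem \ref{retract}) comes down to the observation that the reduction of the Gauss point is the generic point of the special fiber, hence stable. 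If you want to pursue your construction, the two items to settle are a proof (or disproof) of the existence of a $D^\times$-invariant-up-to-homothety norm on $H^0(X,\mathcal{O}_X(d/n))$, and a descent-of-continuity argument through $\M_\infty$; as it stands the proposal is a plausible program rather than a proof.
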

The construction of the map $r$ is inspired from the works of van der Put-Voskuil \cite{pv1992} and Voskuil \cite{vos2000}. First of all, we have the inclusion \[\Fl(G,\mu)^{b_0}\subset \Fl(G,\mu)^{ss}:=\Fl(G,\mu)^{\HN=b_0},\] cf. \cite{shen2023} Proposition 3.4. On the other hand, we have the subspace of stable locus $\Fl(G,\mu)^{s}\subset \Fl(G,\mu)^{ss}$.
Then under the assumption $(d, n)=1$, the semistable locus coincides with the stable locus \[\Fl(G,\mu)^{s}=\Fl(G,\mu)^{ss}.\] Moreover, both admit descriptions in terms on geometric invariant theory; for these facts see \cite{pv1992} sections 1 and 2  (see also \cite{totaro} and \cite{DOR10} Theorem 9.7.3 for the statement on GIT).  Then it suffices to construct a retraction map $r: \Fl(G,\mu)^{s}\ra \B(G,F)$ with the desired properties. To this end,
 we proceed as \cite{vos2000} by firstly constructing continuous maps $r_A$ for each department $A\subset \B(G, F)$, cf. Propositions \ref{prop continuous retract torus} and \ref{prop properties retract torus}. To construct these continuous $r_A$, we use crucially the fact that the semistable locus coincides with the stable locus. Moreover, we
  in fact need to use some GIT arguments over the integral base $O_F$. Next, we
show these maps $r_A$ are compatible, cf. Lemma \ref{lemma compatibility}, which is very crucial to the following. Based on these careful analysis on Bruhat-Tits buildings and GIT, we can construct analytic subspaces $Y_z\subset \Fl(G,\mu)^{s}$ for each $z\in \B(G, F)$, such that $\Fl(G,\mu)^{s}=\coprod_{z\in \B(G, F)}Y_z$ as in \cite{vos2000}. Then we define a map \[r: \Fl(G,\mu)^{s}\lra \B(G, F)\] by contracting these $Y_z$, and prove that it is continuous, cf. Theorem \ref{thm retract continuous}. By restricting to the open subspace $\Fl(G, \mu)^{b_0}$, we get a continuous map $r$. Then we check that this indeed gives a retraction map for $\theta$, cf. Theorem \ref{retract}, and it coincides with the Drinfeld map in the case $d=1$, cf. Proposition \ref{prop retract Drinfeld}. Note that in the Drinfeld case, the definition of $r$ is quite simple, as it is given by an explicit restriction formula of norms, cf. \cite{Dri74, Ber95}. 

 In \cite{vos2000} under the assumption $\Fl(G,\mu)^{ss}=\Fl(G,\mu)^{s}$, a map from the rigid analytic version of $\Fl(G,\mu)^{s}$ to the building $\B(G,F)$ was announced. Here we work with Berkovich spaces, in contrast with \cite{vos2000}. Moreover, many crucial properties and arguments such as in Propositions \ref{prop continuous retract torus} and \ref{prop properties retract torus} and Lemma \ref{lemma compatibility} are missing in \cite{vos2000}. 
Furthermore, in the setting of \cite{pv1992}  and \cite{vos2000},  this map could not be called a retraction map, since at that time there was no embedding of $\B(G,F)$  into $\Fl(G,\mu)^{s}$ at all.

By Theorems \ref{thm 1.1} and \ref{thm 1.3}, when $\{\mu\}$ varies, the same building $\B(\GL_n, F)$ admits embeddings and receives retractions to/from different $p$-adic period domains $\Fl(\GL_n, \mu)^{b_0}$. This reveals that these $p$-adic period domains admit similar combinatorial structure in some sense.   To study $\Fl(\GL_n, \mu)^{b_0}$, we are reduced to study the fibers of the retraction map, which are \[\Fl(\GL_n, \mu)^{b_0}\cap Y_z\] for $z\in \B(\GL_n, F)$.  In the extreme\footnote{The flag variety $=\mathbb{P}^{n-1}$ has the minimal dimension, which equals to the dimension of $\B(\GL_n, F)$.} case $d=1$, we have $\Fl(\GL_n, \mu)^{b_0}=\Fl(\GL_n,\mu)^{s}$ and these fibers are clear; in fact this was one way to construct the $p$-adic analytic structure of the Drinfeld space, cf. \cite{Dri74, ss91}.
On the other hand, the retraction map $r$ may be useful to offer a new\footnote{The other ``standard'' approach is to use the combinatorial structure of the complement of the semistable locus, cf. \cite{DOR10} Chapter X. } approach for computing the $\ell$-adic cohomology of the semistable locus $\Fl(\GL_n, \mu)^{ss}$, as what Dat did in \cite{dat2006} in the Drinfeld case $d=1$.
\\

So far we mainly state our results in the Hodge-Tate setting. In section \ref{section de Rham}, we will briefly translate the previous constructions and results to the de Rham setting, discussing the relations between the building $\B(G_{b_0},F)$ and the admissible locus $\Fl(G,\mu^{-1}, b_0)^\adm$ for a basic local Shimura datum $(G, \{\mu^{-1}\}, b_0)$  as in the very beginning of this introduction. In particular, in the setting of Theorem \ref{thm 1.3}, the whole picture is quite similar to the Drinfeld case as in \cite{Dri76}, although the retraction map is much more complicated in general, and it is not any more a fully Hodge-Newton decomposable case.
\\

We briefly describe the structure of this paper. In section \ref{ber}, we review the Berkovich map $\theta$ and Berkovich compactification of the building $\B(G,F)$, following the paper of R\'emy-Thuillier-Werner \cite{rtw2010}. In section \ref{newtonstrata}, we introduce the Newton stratification on the $p$-adic flag variety $\Fl(G,\mu)^\Berk$, and prove Theorem \ref{thm 1.1}. In section \ref{boundary}, we compare the boundary strata of the Berkovich compactification and non basic Newton strata, proving the Theorem \ref{thm 1.2}. In section \ref{retraction}, we consider some special examples, with $G=\GL_n$ and $\mu=(1^d, 0^{n-d})$ with $(d, n)=1$. We construct a continuous retraction map from $\Fl(G,\mu)^{b_0}$ to $\B(G, F)$ inspired by the works \cite{pv1992} and \cite{vos2000}, and prove Theorem \ref{thm 1.3}. We also make some remarks about potential applications of such a retraction map. Finally in  section \ref{section de Rham}, we briefly translate the previous results to the de Rham setting, and summarize the relations between the building $\B(G_{b_0},F)$ and the admissible locus $\Fl(G,\mu^{-1}, b_0)^\adm$ for a basic local Shimura datum $(G, \{\mu^{-1}\}, b_0)$. We also discuss some related open problems on  cohomological applications in the setting of Fargues-Scholze \cite{FS}.\\
 \\
\textbf{Acknowledgments.} We would like to thank the institutes YMSC and BICMR in Beijing, as this work was started after both authors attended some talks in these institutes during January 2024.  We would like to thank Laurent Fargues and Peter Scholze for helpful conversations during the preparation of this work.    The first author was partially supported by the National Key R$\&$D Program of China 2020YFA0712600, the CAS Project for Young Scientists in Basic Research, Grant No. YSBR-033, and the NSFC grant No. 12288201.\\
 \\
\textbf{Notations}:
We will use the following notions (mainly follow the conventions in \cite{cfs2021} and \cite{rtw2010}):
\begin{itemize}
	\item 

$F$ is a finite extension of $\mathbb{Q}_p$ with residue field $\mathbb{F}_q$.

\item  $\overline{F}$ is an algebraic closure of $F$ with Galois group $\Gamma=\mathrm{Gal}(\overline{F}|F)$.

\item $\breve{F}=\widehat{F^{ur}}$ is the completion of the maximal unramified extension $F^{ur}$ of $F$.

\item  $G$ is a connected reductive group over $F$.

\item $H$ is a quasi-split inner form of $G$, equipped with an inner twisting $G_{\ov{F}}\st{\sim}{\ra} H_{\ov{F}}$.

\item $A\subset T\subset B$ are maximal $F$-split torus, maximal torus, and Borel subgroup of $H$. Sometimes we also use $A$ and $T$ to denote the corresponding notions for $G$.

\item $(X^\ast(T), \Phi, X_\ast(T), \Phi^\vee)$ is the absolute root datum of $H$, with positive roots $\Phi^+$ and simple roots $\Delta$ with respect to the choice of $B$; while $(X^\ast(A), \Phi_0, X_\ast(A), \Phi_0^\vee)$ is the relative root datum of $H$, with positive roots $\Phi^+_0$ and simple roots $\Delta_0$ with respect to the choice of $B$.

\item For a standard Levi subgroup $M$ of $H$, we note by a subscript $M$ the corresponding roots or coroots appearing in $\Lie\,M$.

\item Fix a minimal $F$-parabolic and $F$-Levi subgroups $M_0\subset P_0 \subset G$. Standard $F$-parabolic and $F$-Levi subgroups of $G$ are those containing $P_0$ and $M_0$ respectively.

\item  $G_{ad}$ is the adjoint group associated to $G$.

\item  $\B(G,F)$ denotes the reduced Bruhat-Tits building for $G(F)$.  We have a homeomorphism \[\B(G,F)\cong \B(G_{ad},F)\] induced by the natural map $G\rightarrow G_{ad}$.

\item For a cocharacter $\mu$, we will use $\{\mu\}$ to denote its conjugacy class.

%\item  For a flag variety $\Fl(G,\mu)$, we will use $\Fl(G,\mu)^{\Berk}$ to denote the associated Berkovich space and use $\Fl(G,\mu)^{\ad}$ to denote the associated adic space. 
%If there is no confusion according to the context, we may also omit the superscript for simplicity.

\item Let $\mathbb{D}$ be the pro-torus with character group $X^\ast(\mathbb{D})=\Q$. The Newton chamber of $G$ is defined by \[\N(G)=[\Hom(\mathbb{D}_{\ov{F}}, G_{\ov{F}})/G(\ov{F})\tr{-conjugacy}]^\Gamma.\]
Via the inner twisting between $G$ and $H$, we have an identification
\[\N(G)=\N(H)=X_\ast(A)^+_\Q.\]
This is equipped with the partial order $v_1\leq v_2$ if and only if $v_2-v_1\in \Q_{\geq 0}\Phi_0^+$.
\item $\pi_1(G)=X_\ast(T)/\lan \Phi^\vee\ran$ is the algebraic fundamental group of $G$, and $\pi_1(G)_\Gamma$ is its Galois coinvariant. Via the inner twisting between $G$ and $H$, we have identifications
\[\pi_1(G)=\pi_1(H), \quad \pi_1(G)_\Gamma=\pi_1(H)_\Gamma.\]

\item For an algebraic variety $X$ over a non-archimedean field $k$, $X^\Berk$ is the associated Berkovich space (\cite{Ber90}), and $X^\ad$ is the associated adic space (\cite{Hub96}).

\end{itemize}

\section{Bruhat-Tits buildings and $p$-adic flag varieties}
\label{ber}

In this section, we review the continuous map from Bruhat-Tits buildings to Berkovich flag varieties and the Berkovich compactification of  buildings constructed in \cite{rtw2010}. In loc. cit. the authors use
the language of \emph{types} for parabolic subgroups to define connected flag varieties. We will compare it with the language of cocharacters, which is more suitable in the setting of $p$-adic Hodge theory. We assume that the reader has some familiarities with the theory of Berkovich spaces, see \cite{Ber90, Ber93} (or \cite{rtw2010} subsection 1.2 for a very brief review). For basics on Bruhat-Tits buildings, we refer to \cite{tits1979,Yu, bt1972, bt1984} (or  \cite{rtw2010} subsection 1.3 for a very brief summary). See also \cite{rtw2015} for a survey of both theories.

\subsection{Embeddings of Bruhat-Tits buildings into Berkovich flag varieties}
Let  $G$ be a connected reductive group over $F$ as above. As in \cite{rtw2010} section 1.1, let \[\Par(G)\] denote the moduli space of all parabolic subgroups of $G$. This is a projective and smooth scheme over $F$. For any $F$-scheme $S$, $\Par(G)(S)$ consists of the set of smooth subgroups $P$ of $G_S=G\times_FS$ such that for any geometric point $\ov{s}$ of $S$, the quotient $G_{\ov{s}}/P_{\ov{s}}$ is a proper $\ov{s}$-scheme (in other words, $P_{\ov{s}}$ is a parabolic subgroup of $G_{\ov{s}}$). For a parabolic subgroup $P \subset G_{\overline{F}}$,  the \emph{type} of $P$ is the connected component (over $F$) of $\Par(G)$ containing $P$. Sometimes we say also $F$-types of parabolic subgroups of $G$, as this notion depends on the base field. The set of types of parabolic subgroups of $G$ is by definition the set of connected components of $\Par(G)$ over $F$, which is in natural bijection with the $\mathrm{Gal}(\ov{F}/F)$-stable subsets of the simple roots of $G_{\ov{F}}$.
The connected component corresponding to a given type $t$ is denoted by \[\Par_t(G).\] The type $t$ is called $F$-\emph{rational} if the flag variety $\Par_t(G)$ contains an $F$-point, which corresponds to a parabolic subgroup of $G$ over $F$ equivalently. 

In this paper, we will mainly produce flag varieties from cocharacters. Given a cocharacter $\mu: \G_m\ra G_{\overline{F}}$ for $G_{\overline{F}}$, we can associate it a parabolic subgroup of  $G_{\overline{F}}$ (see \cite{cs2017} section 2.1):
\[P_{\mu}=\{g| \lim_{t \to 0}ad(\mu(t))g \ \text{exists}\}.\]
It has a Levi component being the centralizer of $\mu$: \[M_{\mu}=\mathrm{Cent}_{G_{\overline{F}}}(\mu).\]
Each parabolic subgroup of $G_{\bar{F}}$ can be obtained in this way, but two different cocharacters may correspond to a common parabolic subgroup. For example, if we rescale $\mu$ by $\mu_1=\mu^{d}$ ($d$ is a positive integer), then $P_{\mu}=P_{\mu_1}$.
Passing to conjugacy  class, each conjugacy class $\{\mu\}$ will determine a conjugacy class of $P_{\mu}$, then a flag variety \[\Fl(G,\mu) \cong G_{\bar{F}}/P_{\mu}.\] It is a connected component of $\Par(G)_{\ov{F}}$, thus  projective and smooth over $\ov{F}$. Moreover, it is defined over the (local) reflex field $E=E(G,\{\mu\})$,  which is by definition the field of definition for the conjugacy class $\{\mu\}$ (thus a finite extension of $F$). In the following, we still denote by $\Fl(G,\mu)$ the corresponding flag variety over $E$.  If $G$ is quasi-split over $F$, then the flag variety $\Fl(G,\mu)$ contains an $E$-point by a result of Kottwitz (see \cite{DOR10} Lemma 6.1.5). For any algebraically closed field $L|E$, the set $\Fl(G,\mu)(L)$ can be described as the set of $G$-filtrations over $L$ of type $\{\mu\}$, cf. \cite{DOR10} Theorem 6.1.4.

Now we make  a comparison of notions. Each conjugacy class $\{\mu\}$ determines a flag variety $\Fl(G,\mu)$, which  correspond to a connected component of $\Par(G)_{\bar{F}}$ which is defined over $E$, thus an $E$-type $t_E$ for $G_E$ or $\Par(G_E) (=\Par(G)_E)$. This $E$-type may or may not be rational (depending on whether $\Fl(G,\mu)$ has an $E$-rational point).
%In particular, it may not correspond to a connected component of $\Par(G)$ over $F$. This is different from the notion of $F$-types, where $\Par_t(G)_{\bar{F}}$ may become disconnected after base change. In the setting of \cite{rtw2010}, for $F$-rational type $t$, $\Par_t(G)$ is also geometrically connected. For a general type $t$, we can take a finite Galois extension $F_1|F$ such that $\Par_t(G)(F_1)$ is non-empty, then we can pick up an $F_1$-rational type $t_1$ above $t$, work with $\Par_{t_1}(G)_{F_1}$, and use the natural projection \[\Par_{t_1}(G)_{F_1}\longrightarrow \Par_{t}(G)_{F}.\] 
%Combine the base change functoriality and some argument with Galois invariants, we will see that the latter projection does not influence the image of the Bruhat-Tits building and its compactification. See the section 2.4 of \cite{rtw2010} and its proof of proposition 3.32 for more details.  The only slight difference is that 
We will work with the geometrically connected flag variety $\Fl(G,\mu)$ over $E$. It determines a unique $F$-type $t=t_\mu$ (which may not be $F$-rational) by considering all the Galois conjugates of $\mu$ (or of $t_E$). We have a natural morphism
\[\Fl(G,\mu)\hookrightarrow \Par_{t}(G)_E, \] which realizes $\Fl(G,\mu)$ as a connected component of  $\Par_{t}(G)_E$.
On the other hand, we have a projection map (for algebraic varieties over different base fields)
\[\tr{pr}_{E|F}:  \Fl(G,\mu)\ra \Par_{t}(G).\]
If $E=F$, then we have $t=t_E$ and $\Fl(G,\mu)=\Par_{t}(G)$.

Since our base field $F$ is a $p$-adic local field (locally compact), the \textbf{functoriality} assumption for Bruhat-Tits buildings in \cite{rtw2010} (section 1.3.4) is satisfied:
The Bruhat-Tits building construction forms
 a functor $\B(G,\cdot )$ from the category $\mathrm{Extfd}(F)$ of non-archimedean extensions of $F$ to the category of topological spaces, mapping a field extension $F_2|F_1$ to a $G(F_1)$-equivariant continuous \emph{injection} \[i_{F_1,F_2}: \B(G,F_1)\hookrightarrow \B(G,F_2).\]
This functorial property is frequently used in \cite{rtw2010} and will also be a useful technique for this paper. It enables us to convert the general case into split case and work with special vertices.

In section 2.2 of \cite{rtw2010}, the authors construct a canonical map \[\theta: \B(G,F) \longrightarrow G^{\Berk},\] which is continuous and $G(F)$-equivariant (for the conjugation action of $G(F)$ on $G^{\Berk}$). The construction proceeds in two steps: for any $x\in \B(G,F)$,
\begin{enumerate}
	\item[Step 1:] One constructs an affinoid subgroup $G_x\subset G^{\Berk}$ such that for any non-archimedean extension $L|F$, we have \[G_x(L)=\mathrm{Stab}_{G(L)}(x_L),\] where $x_L\in \B(G,L)$ is the image of $x$ under the canonical map $\B(G,F)\hookrightarrow \B(G,L)$; see \cite{rtw2010} Theorem 2.1.
	\item[Step 2:] The affinoid subgroup $G_x$ has a unique Shilov boundary \[\theta(x)\in G_x\subset G^{\Berk}.\] By \cite{rtw2010}  Proposition 2.4, this defines a continuous map $\theta: \B(G,F) \longrightarrow G^{\Berk}$.
\end{enumerate}
By \cite{rtw2010} Proposition 2.7 this map is functorial with respect to fields extensions: for any non-archimedean  extension $L|F$, the natural diagram 
\[\xymatrix{
\B(G,L)\ar[r]^{\theta_L}& G_L^{\Berk}\ar[d]^{\tr{pr}_{L|F}}\\
\B(G,F)\ar[u]^{i_{F,L}}\ar[r]^\theta& G^{\Berk}
}\]
is commutative.

Recall our conjugacy class of cocharacters $\{\mu\}$ and the associated flag variety $\Fl(G,\mu)$ over $E$. If this flag variety has an $E$-rational point $P$, 
then through the map \[\lambda_P: G_E\longrightarrow G_E/P\cong \Fl(G,\mu),\] after passing to the associated morphism of Berkovich spaces and by composing with the inclusion $i_{F,E}: \B(G,F)\hookrightarrow \B(G,E)$, we get a $G(F)$-equivariant continuous map:
\[\theta_{\mu,F,E}=\lambda_P\circ\theta_E\circ i_{F,E}: \B(G,F)\longrightarrow \Fl(G,\mu)^{\Berk}.\] It does not depend on the choice of the $E$-rational point $P$. In the general case, take a finite extension $L|E$ such that $\Fl(G,\mu)$ has an $L$-rational point. We have $\theta_{\mu, F, L}: \B(G,F)\longrightarrow \Fl(G,\mu)^{\Berk}_{L}$ as above. Then define \[\theta_{\mu, F, E}=\tr{pr}_{L|E}\circ \theta_{\mu, F, L}: \B(G,F)\longrightarrow \Fl(G,\mu)^{\Berk}.\] As in \cite{rtw2010} 2.4.4 this does not depend on the choice of $L$. We call $\theta_{\mu, F, E}$ \textbf{the Berkovich map}. It is canonical, only relies on the conjugacy class $\{\mu\}$. It has the following base change functoriality.

\begin{prop}
\label{bc1}

For any non-archimedean extension $L|E$, 
the following diagram is commutative:

\[\xymatrix{
\B(G,L) \ar[r]^{\theta_{\mu,L,L}} &\Fl(G,\mu)^{\Berk}_L\ar[d]^{\tr{pr}_{L|E}}\\
\B(G,F) \ar[u]^{i_{F,L}}\ar[r]^{\theta_{\mu,F,E}} &\Fl(G,\mu)^{\Berk}},\]
where $\theta_{\mu,L,L}$ is defined similarly as above.

\end{prop}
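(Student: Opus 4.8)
The plan is to reduce the statement to the functoriality properties of the map $\theta$ that are already established in \cite{rtw2010}, combined with the compatibility of the construction $\theta_{\mu,-,-}$ with base change of fields. First I would handle the case where the flag variety $\Fl(G,\mu)$ has an $E$-rational point $P$. Then $\theta_{\mu,F,E} = \lambda_P \circ \theta_E \circ i_{F,E}$ by definition, and similarly $\theta_{\mu,L,L} = \lambda_{P_L} \circ \theta_L$ where $P_L$ is the base change of $P$ to $L$. I would stack two commutative squares: the outer one involving $\theta$ (coming from \cite{rtw2010} Proposition 2.7, with $F$ replaced by $E$ and $L$ playing the role of the larger field), which gives $\tr{pr}_{L|E} \circ \theta_L \circ i_{E,L} = \theta_E$; and the naturality square for $\lambda$, namely $\tr{pr}_{L|E} \circ \lambda_{P_L}^{\Berk} = \lambda_P^{\Berk} \circ \tr{pr}_{L|E}$ on $G^{\Berk}$, which follows because $\lambda_P: G_E \to \Fl(G,\mu)$ is an $E$-morphism of algebraic varieties and passing to Berkovich spaces commutes with base change. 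Combining these with the identity $i_{F,L} = i_{E,L} \circ i_{F,E}$ (functoriality of the building construction on $\mathrm{Extfd}(F)$) yields the claim.

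Next I would treat the general case, where one only has an $L'$-rational point for some finite extension $L'|E$. Here $\theta_{\mu,F,E}$ is defined as $\tr{pr}_{L'|E} \circ \theta_{\mu,F,L'}$, and the content of \cite{rtw2010} 2.4.4 is that this is independent of the choice of $L'$. Given an arbitrary non-archimedean extension $L|E$, I would choose a common finite extension $L''$ that is an extension of both $L'$ and (if necessary) of a field over which the point is rational relative to $L$; more precisely, it suffices to pick a finite extension $M|L$ over which $\Fl(G,\mu)$ has an $M$-rational point and which also contains (a copy of) $L'$ over $E$. Then one applies the already-proven rational-point case twice — once over the pair $(M, \text{larger field})$ and once to relate $L$-level to $M$-level — and uses the transitivity of the projection maps $\tr{pr}_{M|L} \circ \tr{pr}$, together with the well-definedness statement, to descend back to $L$ and to $E$. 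The key bookkeeping is that all the projection maps $\tr{pr}_{?|?}$ on Berkovich flag varieties compose correctly, which is immediate from their definition as maps underlying base change of schemes.

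The main obstacle I anticipate is purely organizational rather than conceptual: keeping track of the various field extensions and ensuring that the independence-of-choice statements from \cite{rtw2010} are invoked correctly, so that the definition of $\theta_{\mu,L,L}$ (which itself may require passing to a further extension of $L$) is compatible with that of $\theta_{\mu,F,E}$. One has to be careful that when $L$ is not finite over $E$, the auxiliary splitting field must be chosen as a finite extension of $L$, not of $E$, and then one needs a diagram chase over a sufficiently large field dominating everything in sight. Once the diagram is set up, each individual square commutes either by \cite{rtw2010} Proposition 2.7, by the functoriality $i_{F,L} = i_{E,L}\circ i_{F,E}$ of buildings, or by base-change compatibility of the Berkovich functor applied to the morphism $\lambda_P$; no new geometric input is needed. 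I would also remark that this proposition is exactly the tool used later (as noted in the introduction) to reduce Theorem~\ref{thm 1.1} to a computation after base change to a large field $K$.
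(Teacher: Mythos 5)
Your proposal is correct and follows essentially the same route the paper takes: the paper's ``proof'' consists of citing \cite{rtw2010} Proposition 2.16, and your argument is a faithful expansion of exactly that citation --- stacking the $\theta$-functoriality square from \cite{rtw2010} Proposition 2.7 with the base-change naturality of $\lambda_P^{\Berk}$, using $i_{F,L}=i_{E,L}\circ i_{F,E}$, and handling the non-rational case by passing to a sufficiently large auxiliary field and invoking the independence-of-choice statement from \cite{rtw2010} 2.4.4. No gap; the bookkeeping over a dominating field is precisely what is needed.
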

This can be proved exactly as \cite{rtw2010} Proposition 2.16. In fact, as in \cite{rtw2010} 2.4.2 and 2.4.4, we can consider the Berkovich map over the same base field $F$ (here $t=t_\mu$ as above): \[\theta_{t, F}: \B(G, F)\lra \Par_{t}(G)^\Berk.\] Then there is a similar commutative diagram
\[\xymatrix{
\B(G, E)\ar[r]^{\theta_{\mu,E,E}} &\Fl(G,\mu)^{\Berk} \ar[d]^{\tr{pr}_{E|F}}\\
\B(G,F)\ar[u]^{i_{F,E}}\ar[r]^{\theta_{t,F}} & \Par_{t}(G)^\Berk.
} \]
By construction, we have $\theta_{\mu, F,E}=\theta_{\mu, E, E}\circ i_{F,E}$ and thus $\theta_{t,F}=\tr{pr}_{E|F}\circ \theta_{\mu, F,E}$. The image $\theta_{\mu,F,E}(\B(G,F))$ is isomorphic to the image $\theta_{t,F}(\B(G,F))$ under the projection map $\tr{pr}_{E|F}$.
Indeed, this can be easily deduced from the base change diagram by choosing a common Galois extension $L$ of $E$ and $F$.
On the other hand,
for any non-archimedean field extension $L|E$,  as the above we get a map \[\theta_{\mu,F,L}:\B(G,F)\longrightarrow \Fl(G,\mu)^{\Berk}_L,\] which can be seen as a lift for the map $\theta_{\mu,F,E}$ respect to the projection map $\tr{pr}_{L|E}$. This already shows that the image of the Bruhat-Tits building is special. For a general subspace of $\Fl(G,\mu)^{\Berk}$, we do not have  such a canonical lift. 
%Due to this nice property, we  do not need to be care about the base field and for simplicity we may also write $\theta$ or $\theta_{\mu,F}$ for the Berkovich map.   

Now, it is clear that by base change we can pass to split reductive groups to study the Berkovich map. In the split setting, we can make the map $\theta_{\mu,F, E}$ more explicitly.
Assume that $G$ is split over $F$ (so that $E=F$). 
We only need to consider $F$-cocharacters. Let $\mu$ be such a cocharacter,  $P_{\mu}$ the corresponding parabolic subgroup over $F$. Take a maximal split torus $T$ inside $P=P_\mu$ and it is also a maximal split torus for $G$. It will determine an apartment $A(T,F)$ inside the Bruhat-Tits building $\B(G,F)$. On the other hand, let $P^{op}$ denote the parabolic subgroup of $G$ opposite to $P$ with respect to $T$ and $N^{op}$ denote its unipotent radical. Let $t$ be the type of $P$. The $F$-morphism \[N^{op}\longrightarrow \Par_t(G) \cong \Fl(G,\mu), \quad  g\mapsto g P_{\mu} g^{-1}\] is an isomorphism onto an open subvariety (open Bruhat cell) of $\Fl(G,\mu)$, which we denote by $\Omega(T,P)$.

Let $\Phi (G,T)$ be the set of roots of $G$ with respect to $T$ and pick a special vertex $o$ in $\B(G,F)$ compatible with $T$ ($o$ lies in the apartment $A(T,F)$). Let $\Psi=\Phi(N^{op},T)$, the subset of $\Phi(G,T)$ determined by $N^{op}$. The special vertex $o$ will give us an integral model for $G$ and the choice of  an integral Chevalley basis in $\Lie\,G$ gives isomorphisms  \[\Omega(T,P)\cong N^{op}\cong\Spec\, F[(X_{\alpha})_{\alpha \in \Psi}].\]

\begin{prop}
\label{explicit}
Under the above assumptions, we have the following statements.
\begin{enumerate}
	\item 

The Berkovich map $\theta_{t,F}$ sends the point $o$ to the point of $\Omega(T,P)^{\Berk}$ corresponding to the multiplicative norm (Gauss point) \[F[(X_{\alpha})_{\alpha \in \Psi}]\rightarrow \mathbb{R}_{\geq 0},\quad   \sum_{v \in \mathbb{N}^{\Psi}}a_v X^{v}\mapsto \max_{v} |a_v|. \]

\item Use the point $o$ as the origin to  identify the apartment $A(T,F)$ with the vector space $V(T)=\mathrm{Hom}(X^{*}(T),\mathbb{R})$, the map \[V(T)\longrightarrow \Fl(G,\mu)^{\Berk}\] induced by $\theta_{t,F}$ sends an element $u$ of $V(T)$ to the point of $\Omega(P,T)^{\Berk}$ corresponding to the multiplicative norm \[F[(X_{\alpha})_{\alpha \in \Psi}]\rightarrow \mathbb{R}_{\geq 0},\quad   \sum_{v \in \mathbb{N}^{\Psi}}a_v X^{v}\mapsto \max_{v} |a_v|\prod_{\alpha \in \Psi} e^{v(\alpha)\langle u,\alpha\rangle}. \]
\end{enumerate}

\end{prop}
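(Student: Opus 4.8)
The plan is to reduce to the case of a torus and then unwind the definition of the Shilov boundary point of the affinoid subgroup $G_o$. First I would recall from \cite{rtw2010} Section 2 that the affinoid subgroup $G_o \subset G^{\Berk}$ attached to the special vertex $o$ is, after the choice of the integral Chevalley basis, precisely the analytification of the Iwahori-type model $\mathcal{G}_o$, i.e.\ the affinoid domain in $G^{\Berk}$ whose $L$-points (for $L|F$ non-archimedean) are $\mathcal{G}_o(L^\circ) = \mathrm{Stab}_{G(L)}(o_L)$; its Shilov boundary consists of the single Gauss point with respect to the coordinates coming from the big cell $N^{op} \times T \times N$. The key reduction is the ``iso-triviality along the apartment'' statement of \cite{rtw2010}: for $u \in V(T)$, the affinoid subgroup $G_u$ is the translate of $G_o$ by the diagonalizable action of $u$, so its Shilov point is the image of the Gauss point under the corresponding ``twist'' of coordinates. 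Concretely, on the big cell $\Omega(T,P) \cong \Spec F[(X_\alpha)_{\alpha\in\Psi}]$ the point $u$ acts on $X_\alpha$ by scaling with the value $e^{\langle u,\alpha\rangle}$ (this is exactly the effect of conjugating $N^{op}$ by the relevant one-parameter subgroup), which is why the Shilov point of $G_u$ pushed to $\Fl(G,\mu)^{\Berk}$ is the multiplicative seminorm $\sum a_v X^v \mapsto \max_v |a_v| \prod_\alpha e^{v(\alpha)\langle u,\alpha\rangle}$.

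The steps I would carry out are: (1) Identify $G_o$ explicitly using the Chevalley model attached to the special vertex $o$, and verify that the Shilov boundary of the affinoid $N^{op,\Berk}_o$ (a polydisc of polyradius $1$ in the coordinates $X_\alpha$) is the Gauss point; this gives statement (1) after pushing forward under $\lambda_P \circ$ (inclusion), noting that the big cell $\Omega(T,P)$ is exactly the image of $N^{op}$ under $g \mapsto gP_\mu g^{-1}$, which is an open immersion, so the Shilov point lands in $\Omega(T,P)^{\Berk}$ and is computed in these coordinates. (2) For general $u \in V(T)$, use that the apartment $A(T,F)$ is a torsor under $V(T)$ and that the $T$-action (via the torus-valued points given by $u$) intertwines $G_o$ with $G_u$; track how this action transforms the coordinate functions $X_\alpha$. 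Here one uses that for the root space indexed by $\alpha \in \Psi$, an element of the (split) torus acts by the character $\alpha$, and that the valuation of $u$ applied to $\alpha$ is $\langle u,\alpha\rangle$ — this is essentially the same computation that underlies the definition of the affine root hyperplanes. (3) Conclude that the Gauss point is replaced by the weighted Gauss point with weights $e^{\langle u,\alpha\rangle}$ on each coordinate direction, and that the push-forward to $\Fl(G,\mu)^{\Berk}$ is the stated seminorm. One should also check the compatibility with the normalization of the valuation on $F$ (so that the factor $e^{v(\alpha)\langle u,\alpha\rangle}$ rather than some power appears), but this is bookkeeping once the convention identifying $A(T,F)$ with $V(T)$ via $o$ is fixed.

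The main obstacle, I expect, is Step (2): making precise the claim that the affinoid subgroup $G_u$ is literally obtained from $G_o$ by the $V(T)$-action and verifying that this action is the ``scaling by $e^{\langle u,\alpha\rangle}$'' one on the big-cell coordinates. This requires care because $G_u$ is defined intrinsically via stabilizers (\cite{rtw2010} Theorem 2.1), and one must match this with the explicit coordinate description; the cleanest route is probably to first establish the formula for $u$ in the cocharacter lattice $X_*(T) \subset V(T)$ (where the ``action'' is an honest algebraic conjugation by $\mu'(\varpi)$ for an appropriate cocharacter $\mu'$, and the computation is a direct check on monomials $X^v$), and then extend to all of $V(T)$ by continuity of $\theta_{t,F}$ together with the density of $X_*(T)_\Q$ in $V(T)$ and the fact that both sides vary continuously in $u$. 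Everything else — identifying the Gauss point as the Shilov boundary of a unit polydisc, and the functoriality of push-forward along $\lambda_P$ — is standard Berkovich-space input already available from \cite{rtw2010}, \cite{Ber90}.
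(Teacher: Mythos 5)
The paper offers no proof of this proposition; it refers directly to \cite{rtw2010}, Proposition 2.17. Your sketch is in substance a reconstruction of that argument --- reduce to the big cell, identify the Shilov point of $G_o$ via the Chevalley model, translate along the apartment --- and it is correct in outline. Two remarks. First, the $O_F$-model attached to a \emph{special} vertex of a split group is the Chevalley reductive model, not an ``Iwahori-type'' model; you use the right term later in the sketch, but the initial phrasing is wrong and should be fixed. Second, for step (2), your density route (establish the scaling formula for $u\in X_*(T)_{\Q}$ via conjugation by a suitable $\mu'(\varpi^{1/n})$ over a ramified extension, then extend by density and by continuity of $\theta_{t,F}$) is sound; a variant that avoids the rational/irrational dichotomy is to pass to a non-archimedean extension $L|F$ with value group $\R$, where every $u\in V(T)$ equals $v_T(t)$ for some $t\in T(L)$, apply the conjugation argument verbatim over $L$, and descend via the base-change functoriality of Proposition \ref{bc1}. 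In fact \cite{rtw2010} does yet a third version: it describes the affinoid $N^{op}_u\subset (N^{op})^{\Berk}$ for \emph{arbitrary} $u$ in the apartment directly as the polydisc of polyradii $(e^{\langle u,\alpha\rangle})_{\alpha\in\Psi}$ and reads off the Shilov point, so no translation or density argument is needed. All three versions rest on the same underlying bookkeeping.
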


We refer to \cite{rtw2010} Proposition 2.17 for the proof. 

\begin{rem}
\begin{enumerate}
	\item 
This proposition shows that the Berkovich map will map an apartment into the corresponding open Bruhat cell. Moreover, each point lying in the image is ``\textbf{very generic}'', looks like a generalized Gauss point. This intuition later will help us to prove Theorems \ref{main1} and \ref{retract}.

\item Since $\B(G,F)$ is a union of apartments, the second statement already determines the image of  the Berkovich map. Even better, through the base change map, each point in $\B(G,F)$ will become a special vertex inside $\B(G,L)$ after a suitable non-archimedean field extension $L|F$ (cf. \cite{rtw2010} Proposition 1.6), so the first statement already suffices for many applications.
\end{enumerate}
\end{rem}

Before going on, we discuss when the Berkovich map $\theta_{t, F}: \B(G,F)\lra \Par_t(G)^\Berk$ is an embedding, slightly generalizing the discussions in \cite{rtw2010} 3.4.1 and 3.4.2.
If $G$ is a semisimple  $F$-group (in practice, we can pass to $G_{ad}$, which will not influence the Bruhat-Tits building or the Newton stratification on the $p$-adic flag variety to be introduced later), then there exists a unique finite family $(G_i)_{i\in I}$ of pairwise commuting smooth, normal and connected closed subgroups of $G$, each of them is quasi-simple, such that the product morphism
\[\prod_{i \in I} G_i \longrightarrow G\]
is a central isogeny. These $G_i$ are called the \emph{quasi-simple components} of $G$.
The isogeny $\prod_{i\in I} G_i \longrightarrow G$ induces an isomorphism of buildings \[\prod_{i\in I} \B(G_i,F)=\B(\prod_{i\in I} G_i, F) \xrightarrow{\sim} \B(G,F)\] and an isomorphism of flag varieties over $F$ \[\prod_{i\in I} \Par(G_i)=\Par(\prod_{i\in I} G_i)\xrightarrow{\sim} \Par(G).\]
Let $t$ be an $F$-type of $G$. For each $G_i$, through projection, it determines an  $F$-type $t_i$ of $G_i$. Then we get Berkovich maps $\theta_{t_i, F}$. %the map $\theta_{t,F}$ is a product of maps $\prod \theta_{t_i,F}$: if the type $t$ is $F$-rational, this is proved in \cite{rtw2010} Lemma 3.2.8; the same proof works for a general type. 
We call the type $t$ \emph{non-degenerate} if each $t_i$ is non trivial, i.e. the component $\Par_{t_i}(G_i)$ is non trivial.
Note that this generalizes the notion of non-degenerate types in \cite{rtw2010} subsection 3.1, where it is defined only for $F$-rational types.  If $t_i$ is trivial, the corresponding map $\theta_{t_i,F}$ is also trivial: \[\B(G_i,F)\longrightarrow \{\ast\}.\]  Therefore, it is mild to restrict to non-degenerate types.  Moreover, the Berkovich map $\theta_{t, F}: \B(G,F)\lra \Par_t(G)^\Berk$ is an embedding  if the type $t$ is non-degenerate. Indeed, if $t$ is $F$-rational, this is \cite{rtw2010} Proposition 3.29.  In the general case, take a finite Galois extension $L|F$ to split $G$. Then for each $i$, there exists a non trivial $L$-type $t_{i, L}$ (which is thus rational) dominating $t_i$. Decomposing each $G_{i,L}$ and $t_{i,L}$ into quasi-simple components and using the base change diagram, one easily sees that $\theta_{t, F}$ is an injection.
%More precisely, in loc. cit. there is an assumption that $t$ is $F$-rational, but this can be removed by the proof there, which uses \cite{rtw2010}  Proposition 2.16 to reduce to the split case, while this proposition holds in the non rational type case by \cite{rtw2010} 2.4.4. 
%Applying these discussions to the map $\theta_{\mu,F,E}: \B(G,F)\longrightarrow \Fl(G,\mu)^{\Berk}$, we see that if the conjugacy class $\{\mu\}$ is non-degenerate, in the sense the associated $E$-type $t_E$ is non-degenerate, then $\theta_{\mu,F,E}$ is an embedding.

\subsection{Berkovich compactifications of Bruhat-Tits buildings}
Now we discuss the Berkovich compactification of $\B(G,F)$ with respect to the map \[\theta=\theta_{\mu,F,E}: \B(G,F)\lra \Fl(G,\mu)^{\Berk}.\] Recall that we have the associated $F$-type $t=t_\mu$. 
We denote the image of the Berkovich map as \[\B_t(G,F)=\theta(\B(G,F))\subset \Fl(G,\mu)^{\Berk},\] since it is isomorphic to the image $\theta_{t,F}(\B(G,F))$ under the projection map $\tr{pr}_{E|F}$ as discussed above.
Let \[\overline{\B_t(G,F)}\] denote the closure of $\B_t(G,F)$ inside the flag variety $\Fl(G,\mu)^{\Berk}$.  It is also the closure\footnote{In \cite{rtw2010} there is an equivalent definition by taking the image of the map $G(F)\times \ov{A_t(S,F)}\ra \Par_t(G)^\Berk, (g,x)\mapsto gxg^{-1}$, endowed with the quotient topology, where $\ov{A_t(S,F)}$ is the closure of the image of some apartment $A(S,F)$ in   $\Par_t(G)^\Berk$.} of $\B_t(G,F)$ inside the flag variety $\Par_t(G)^{\Berk}$.
It will be a desired compactification.

An important property of the compactification is that it  extends  the base change functoriality:

\begin{prop}
\label{bc2}
\begin{enumerate}
	\item 

 Let $L|F$ be an extension of non-archimedean fields, then there exists a unique $G(F)$-equivariant continuous map $\overline{\B_t(G,F)}\longrightarrow \overline{\B_t(G,L)}$ making the following diagram commutes:

\[\xymatrix{
\B(G,F) \ar[r]^{i_{F,L}} \ar[d]^{\theta_{t,F}} &\B(G,L)\ar[d]^{\theta_{t,L}}\\
\overline{\B_t(G,F)} \ar[r] & \overline{\B_t(G,L)}}.\]

This map is a  homeomorphism onto its image.

\item The map $\theta_{t,F}: \B(G,F) \longrightarrow \overline{\B_t(G,F)}$  is continuous, open and its image $\B_t(G,F)$ is dense. 
\end{enumerate}
\end{prop}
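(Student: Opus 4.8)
The plan is to deduce both assertions from the corresponding functoriality and density properties of the map $\theta_{t,F}\colon \B(G,F)\to\Par_t(G)^\Berk$ established in \cite{rtw2010} (Theorems 4.11 and 4.18 of loc. cit.), together with the fact, recorded after Proposition \ref{bc1}, that the projection $\tr{pr}_{E|F}\colon\Fl(G,\mu)^\Berk\to\Par_t(G)^\Berk$ restricts to a homeomorphism from $\B_t(G,F)$ (the image inside $\Fl(G,\mu)^\Berk$) onto $\theta_{t,F}(\B(G,F))$ (the image inside $\Par_t(G)^\Berk$). Since the topology used to define the compactification is the subspace topology inherited from a compact Hausdorff space, and since $\tr{pr}_{E|F}$ is a continuous map of compact Hausdorff spaces, it is proper and closed; hence it carries the closure $\overline{\B_t(G,F)}$ inside $\Fl(G,\mu)^\Berk$ homeomorphically onto the closure of $\theta_{t,F}(\B(G,F))$ inside $\Par_t(G)^\Berk$. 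This reduces everything to the case $E=F$, $\Fl(G,\mu)=\Par_t(G)$, which is exactly the situation treated in \cite{rtw2010}.

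For part (1), I would first produce the dashed arrow. Choose a finite Galois extension $L'|F$ containing $L$ and splitting $G$; over $L'$ the type $t$ refines to a rational type and the relevant Bruhat cells are honest affine spaces, so the explicit formula of Proposition \ref{explicit} gives, apartment by apartment, a $G(F)$-equivariant continuous map of the closed apartments $\overline{A_t(S,F)}\to\overline{A_t(S,L)}$ compatible with $i_{F,L}$; gluing over the $G(F)$-translates of a fixed maximal split torus apartment (using the quotient-topology description of $\overline{\B_t(G,F)}$ recalled in the footnote to the definition, namely that $\overline{\B_t(G,F)}$ is the image of $G(F)\times\overline{A_t(S,F)}$ with the quotient topology) yields the desired continuous map. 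Uniqueness follows because $\B_t(G,F)$ is dense in $\overline{\B_t(G,F)}$ (part (2)) and both sides are Hausdorff, so two continuous extensions agreeing on a dense subset coincide. That the map is a homeomorphism onto its image is then a consequence of the source being compact (it is closed in the compact space $\Fl(G,\mu)^\Berk$) and the target Hausdorff, once injectivity is checked; injectivity is inherited from the injectivity of $i_{F,L}$ on the building together with the compatibility of the stratifications of $\overline{\B_t(G,F)}$ and $\overline{\B_t(G,L)}$ by Levi buildings from \cite{rtw2010} Theorem 4.1, tracking that a boundary stratum $\B_\tau(Q_{ss},F)$ maps into $\B_{\tau_L}((Q_L)_{ss},L)$ injectively.

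For part (2), continuity of $\theta_{t,F}\colon\B(G,F)\to\overline{\B_t(G,F)}$ is immediate since $\theta_{t,F}$ is continuous into $\Fl(G,\mu)^\Berk$ and $\overline{\B_t(G,F)}$ carries the subspace topology. Density of the image is true by definition of the closure. Openness onto the image is the only point requiring work: one must show that $\theta_{t,F}$ is a homeomorphism onto $\B_t(G,F)$, i.e. that the continuous bijection $\theta_{t,F}\colon\B(G,F)\to\B_t(G,F)$ has continuous inverse. Here I would invoke, or re-prove, the corresponding statement from \cite{rtw2010} (this is their Theorem 4.11, that $\theta_{t,F}$ is an open topological embedding when $t$ is non-degenerate, discussed in the excerpt just above this proposition): the inverse is constructed from the retraction maps onto apartments, using that on each apartment Proposition \ref{explicit} identifies $\theta_{t,F}$ with an explicit homeomorphism onto its image in the Bruhat cell, and that the building is locally compact so local homeomorphisms onto locally closed pieces patch to a global one. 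The main obstacle is precisely this last openness/embedding claim: continuity and the existence of the base-change map are formal once one has the apartment-level formulas, but establishing that $\theta_{t,F}$ is an \emph{open} map onto its (non-compact, merely locally closed) image $\B_t(G,F)\subset\Fl(G,\mu)^\Berk$ requires the non-degeneracy hypothesis on $t$ and a careful local analysis of the building near each point, which is where the real content of \cite{rtw2010} enters.
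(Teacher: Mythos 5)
Your proposal reconstructs the argument that the paper simply delegates to \cite{rtw2010} (the paper's own ``proof'' is one sentence of citation to section 2.4 and Appendix C of that reference), and the route you take---reduce to $E=F$ via $\tr{pr}_{E|F}$, construct the base-change map apartment by apartment via Proposition \ref{explicit}, glue using the quotient description of $\overline{\B_t(G,F)}$, and read off openness and injectivity from the apartment-level formulas and the boundary stratification---is the route RTW actually follows. So in spirit the proposal matches what the paper intends.

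Two points deserve flagging. First, you assert that $\tr{pr}_{E|F}$ carries $\overline{\B_t(G,F)}\subset\Fl(G,\mu)^\Berk$ \emph{homeomorphically} onto the closure of $\theta_{t,F}(\B(G,F))$ in $\Par_t(G)^\Berk$. Continuity plus compactness of source and Hausdorffness of target only gives a closed surjection onto that closure; homeomorphism additionally requires injectivity of the projection on the \emph{closure}, not just on $\B_t(G,F)$ itself. The paper states this identification as a fact (``it is also the closure of $\B_t(G,F)$ inside the flag variety $\Par_t(G)^\Berk$'') but you need to justify it, e.g.\ by tracking the stratification of the closure (Proposition \ref{levistrata}) through the projection stratum by stratum. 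Second, for part (2) you try to prove openness by proving that $\theta_{t,F}$ is a homeomorphism onto $\B_t(G,F)$, calling it ``the continuous bijection.'' But the map $\B(G,F)\to\B_t(G,F)$ is not a bijection when $t$ is degenerate (any degenerate quasi-simple factor is collapsed to a point), and the proposition carries no non-degeneracy hypothesis. Openness onto the image nevertheless holds in general: after passing to $G_{ad}$ one decomposes $\theta_{t,F}$ as a product over quasi-simple factors, each factor being either an embedding (non-degenerate $t_i$, handled as you describe) or the constant map to a point (degenerate $t_i$), and a product of open-onto-image maps is open onto its image. So the correct statement is that openness is unconditional, while \emph{injectivity} is what requires non-degeneracy; conflating these is a small but real slip in your argument.
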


We refer to section 2.4 of \cite{rtw2010} for more details.  
Although their work  is about $F$-rational type $t$, but these results indeed also hold for general types (use their arguments in section 2.4 and Appendix C). 
%It shows that the Berkovich compactification is independent of the source fields, each $\overline{\theta_{t,F,L}(\B(G,F))}$ are canonically identified for any non-archimedean extension $L$.
By section 4 of \cite{rtw2010} we can describe the compactification $\overline{\B_t(G,F)}$ through a stratification indexed by certain parabolic subgroups of $G$. Each boundary stratum is isomorphic to the Bruhat-Tits building of the Levi quotient of such a parabolic subgroup. To introduce these results, we need more notions. We follow the routine of \cite{rtw2010}, first assume $t=t_\mu$ is $F$-rational, and then deduce the general case.

We first introduce the notions of \emph{osculatory} and \emph{relevant} for parabolic groups, following \cite{rtw2010} subsection 3.2.
For a group scheme $H$ over a base $S$, two parabolic subgroups $P$ and $Q$ are called osculatory if their intersection $P \cap Q$ is still a parabolic subgroup. For our fixed reductive group $G/F$, an $F$-rational type $t$, and a parabolic subgroup $Q$ of $G$ over $F$, we can define the  \emph{osculatory variety} \[\Osc_t(Q),\] which is an algebraic variety over $F$ representing the following functor
\[(\textbf{Sch}/F)^{op} \longrightarrow \textbf{Sets},\quad S \mapsto \{P \in  \Par_t(G)(S)|\,  P \  is \ osculatory \  with \  Q \times_{F} S \}.\]   It is a closed subvariety of $\Par_t(G)$. On the other hand, the map $\Osc_t(Q)\longrightarrow \Par(Q)$ defined by \[\Osc_t(Q)(S)\longrightarrow \Par(Q)(S), \quad P \mapsto P \cap (Q\times_F S)\] is an isomorphism onto a connected component of $\Par(Q)$. Moreover, since the type $t$ is $F$-rational,  $\Osc_t(Q)(F)$ is non empty, thus the resulting type for $Q$ is still $F$-rational and for simplicity we still denote it by $t$. Let $Q_{ss}$ denote the reductive (or named Levi) quotient\footnote{Our notation $Q_{ss}$ is thus different from that in \cite{rtw2010}, where it means the semisimple quotient. Since we will use reduced Bruhat-Tits buildings throughout, this difference will disappear when passing to buildings.} of $Q$. Then $\Par(Q)=\Par(Q_{ss})$. In summary, we have  the following canonical maps: \[\Par_t(Q_{ss})=\Par_t(Q)\cong \Osc_t(Q)\hookrightarrow \Par_t(G).\]
Note that different $Q$ and $Q'$ can represent the same osculatory variety $\Osc_t(Q)$. The $F$-parabolic subgroup
$Q$ is called \emph{$t$-relevant} if it is maximal among all $F$-parabolic subgroups $Q'$ representing $\Osc_t(Q)$.

For later application, it is more convenient to use Levi subgroups. Take a Levi decomposition for $Q=N \rtimes M$. Pick up  an element $P \in \Osc_t(Q)(F)$ compatible with $M$ in the sense that \[M/(M \cap P)\cong Q/(Q \cap P)\] through the natural inclusion $M\subset Q$. Through this identification, we can further identify $\Par_{t}(M)$ with $\Par_t(Q_{ss})$ which is compatible with the isomorphism $M \cong Q_{ss}$.  Then we may replace $Q_{ss}$ by the Levi subgroup $M$ and we can rewrite the above canonical embedding as follows: \[\Par_t(M)\cong M/(M \cap P) \hookrightarrow G/P \cong \Par_t(G).\]

Now through such map, each Bruhat-Tits building $\B(Q_{ss},F)$ (or equivalently $\B(M,F)$) will contribute to the compactification of $\B(G,F)$ in the following way:
\[\B(Q_{ss},F)\xrightarrow{\theta_{t,F}}\Par_t(Q_{ss})^{\Berk}=\Par_t(Q)^{\Berk}\hookrightarrow \Par_t(G)^{\Berk}.\]
The image of such map is contained in $\overline{\B_t(G,F)}$ and the compactification consists of such contributions:
 
\begin{prop}[\cite{rtw2010} Theorem 4.1, Propositions 4.5, 4.6]
	\label{prop compactify rational}
	Assume that the type $t$ is $F$-rational.
\begin{enumerate}
	\item 

 We have the following stratification (each contribution is a locally closed subspace):

\[\overline{\B_t(G,F)}= \coprod_{\substack{Q\in \Par(G)(F)\\ Q\; t\tr{-relevant}}} \B(Q_{ss},F).\]

\item The natural $G(F)$-action on $\B(G,F)$ extends uniquely to an action on $\overline{\B_t(G,F)}$ with \[g \B(Q_{ss},F)=\B(gQ_{ss}g^{-1},F).\]

\item (Base change functoriality) For any non-archimedean extension $L|F$, the following diagram is commutative:

\[\xymatrix{
\B(Q_{ss},F) \ar[r]^{i_{F,L}} \ar[d] &\B(Q_{ss},L)\ar[d]\\
\overline{\B_t(G,F)} \ar[r] & \overline{\B_t(G,L)}}\]
\end{enumerate}
\end{prop}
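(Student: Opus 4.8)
The plan is to reduce everything to a careful analysis of the closure of a single apartment and then propagate by the $G(F)$-action. Fix a maximal $F$-split torus $S$ of $G$, let $A(S,F) \cong V(S) := \Hom(X^\ast(S),\R)$ be the corresponding apartment, and let $\ov{A_t(S,F)}$ be its closure in $\Par_t(G)^\Berk$. By (the $F$-rational-type version of) Proposition \ref{explicit}, $\theta_{t,F}$ embeds $A(S,F)$ into the open Bruhat cell $\Omega(S,P)^\Berk \cong \Spec F[(X_\alpha)_{\alpha\in\Psi}]^\Berk$ attached to a parabolic $P\in\Par_t(G)(F)$ compatible with $S$, sending $u\in V(S)$ to the monomial norm $\sum a_v X^v \mapsto \max_v |a_v|\prod_{\alpha\in\Psi} e^{v(\alpha)\langle u,\alpha\rangle}$. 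The main step is to compute $\ov{A_t(S,F)}$: letting $u\to\infty$ along the various directions of $V(S)$, the limiting norms are supported on the coordinate subschemes $\Spec F[(X_\alpha)_{\alpha\in\Psi'}]$ for subsets $\Psi'\subset\Psi$ cut out by faces of the fan attached to $t$, and one checks that each such limit locus is itself the image under $\theta_{t,F}$ of an apartment of the reduced building $\B(Q_{ss},F)$ of a Levi quotient, embedded through $\Par_t(Q_{ss}) = \Par_t(Q) \cong \Osc_t(Q)\hookrightarrow\Par_t(G)$. The combinatorics of which $\Psi'$ occur is exactly that of the $F$-parabolics containing a fixed minimal one, and here is where $t$-relevance enters: a non-$t$-relevant parabolic $Q'$ sharing its osculatory variety with its $t$-relevant hull $Q$ produces the same boundary face, so the faces are naturally indexed by $t$-relevant $Q$.

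Next I would assemble the global picture using the description of $\ov{\B_t(G,F)}$ as the image of $G(F)\times\ov{A_t(S,F)} \lra \Par_t(G)^\Berk$, $(g,x)\mapsto gxg^{-1}$, with the quotient topology. Since $G(F)$ acts transitively on the relevant pairs (apartment, face at infinity of the given type), the $G(F)$-translates of the boundary faces of $\ov{A_t(S,F)}$ exhaust the boundary, exhibiting it as $\coprod_Q \B(Q_{ss},F)$ over $t$-relevant $F$-parabolics $Q$. Disjointness, hence that this is an honest decomposition, follows by attaching to each $\xi\in\ov{\B_t(G,F)}$ a well-defined $t$-relevant $F$-parabolic $Q(\xi)$ — the unique $t$-relevant one for which $\xi$ lies in the image of $\ov{\B_t(Q_{ss},F)}$ — which is read off from the support of the corresponding seminorm. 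Local closedness of each stratum is checked apartment-wise from the explicit monomial description and then transported by the $G(F)$-action; this gives part (1).

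For part (2), the conjugation action of $G(F)$ on $\Par_t(G)^\Berk$ is continuous and stabilises $\B_t(G,F)$, hence its closure $\ov{\B_t(G,F)}$, giving a continuous extension; uniqueness follows from density of $\B_t(G,F)$ (Proposition \ref{bc2}(2)) and Hausdorffness of $\Par_t(G)^\Berk$, and the formula $g\,\B(Q_{ss},F) = \B(gQg^{-1}{}_{ss},F)$ is immediate from $g\,\Osc_t(Q)\,g^{-1} = \Osc_t(gQg^{-1})$. For part (3), base change is inherited from that of $\theta_{t,F}$: Proposition \ref{bc2}(1) already provides the unique $G(F)$-equivariant continuous map $\ov{\B_t(G,F)}\to\ov{\B_t(G,L)}$ compatible with $i_{F,L}$, a homeomorphism onto its image; one then checks it sends $\B(Q_{ss},F)$ into $\B(Q_{ss},L)$, using that an $F$-parabolic $Q$ which is $t$-relevant over $F$ base-changes to an $L$-parabolic which is $t$-relevant over $L$ (its osculatory variety base-changes), and that the induced map on the strata is the building functoriality map $i_{F,L}$ for $Q_{ss}$.

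The hard part will be the first step: pinning down $\ov{A_t(S,F)}$, i.e. proving that the closure of the monomial norms in $\Par_t(G)^\Berk$ decomposes precisely along $t$-relevant parabolics, with the stated identification of boundary faces with apartments of Levi quotients — and, relatedly, the well-definedness of $Q(\xi)$, which is exactly what makes the union disjoint. Everything else is formal bookkeeping built on the functoriality of $\theta_{t,F}$ established earlier; all of this is carried out in \cite{rtw2010} section 4, and only minor adjustments are needed to allow $t$ to be an arbitrary (not necessarily $F$-rational-coming-from-a-point-free) $F$-rational type.
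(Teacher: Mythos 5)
Your sketch faithfully reconstructs the argument of \cite{rtw2010} (Section 4), which is precisely what the paper relies on: the paper offers no independent proof of this proposition and simply cites \cite{rtw2010} Theorem 4.1 and Propositions 4.5, 4.6. The structure you describe — computing $\ov{A_t(S,F)}$ via limiting monomial (semi)norms, indexing boundary faces by $t$-relevant parabolics through osculatory varieties, propagating by the $G(F)$-action with disjointness coming from a well-defined $Q(\xi)$ read off the support, and inheriting base change from the functoriality of $\theta_{t,F}$ — is the RTW route, so this is the same approach.
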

We may also write the compatification as a union over all $F$-rational parabolic subgroups
\[ \overline{\B_t(G,F)}= \bigcup_{Q\in \Par(G)(F)} \B(Q_{ss},F).\]

Now we turn to the general case that $t=t_\mu$ is not necessary $F$-rational. As in Appendix C of \cite{rtw2010}, we can find an $F$-rational type $\tau$ uniquely determined by $t$.  
More precisely, the type $\tau$ is constructed as follows. Take a minimal $F$-parabolic subgroup $P_0$ of $G$, then we get a closed and smooth subscheme $\mathrm{Osc}_t(P_0)$ of $\Par_t(G)$. There exists  a largest $F$-parabolic subgroup \[Q_0\subset G\] stabilizing $\mathrm{Osc}_t(P_0)$. The conjugacy class of $Q_0$ does not depend on the choice of $P_0$, which defines an $F$-rational type $\tau$. If $t$ is $F$-rational, then $\tau=t$.
If $G$ is quasi-split, then $\tau$ is the largest $F$-rational type dominated by $t$. For example, if $G=U(3)$ is the quasi-split unitary group over $F$ defined by a Hermitian vector space $V=E^3$ with respect to a quadratic unramified extension $E|F$, then $G_{E}=\GL_3$. Both $E$-types defined by minuscule cocharacters $\mu_1=(1,0,0)$ and $\mu_2=(1,1,0)$ will correspond to a single $F$-type $t$, and the resulting $\tau$ will correspond to the conjugacy class of Borel subgroups of $G$.

We have the following result, generalizing Proposition \ref{prop compactify rational}, cf. \cite{rtw2010} Appendix C.
\begin{prop}
\label{levistrata}
We have the following decomposition of the Berkovich compactification
\[\overline{\B_{t}(G,F)}= \coprod_{\substack{Q\,\tr{standard}\\ Q\,\tau\tr{-relevant}}} \coprod_{g \in G(F)}g\Big(\B_\tau(Q_{ss},F)\Big),\]
where $Q$ runs through the set of standard $F$-rational parabolic subgroups of $G$ which are $\tau$-relevant.
\end{prop}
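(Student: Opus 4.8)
The plan is to reduce to the $F$-rational case established in Proposition \ref{prop compactify rational} by means of the $F$-rational type $\tau$ attached to $t=t_\mu$, following \cite{rtw2010} Appendix C. The first and principal step is to identify the two Berkovich compactifications attached to $t$ and to $\tau$: one produces a $G(F)$-equivariant homeomorphism
\[ \iota\colon \ov{\B_\tau(G,F)}\;\st{\sim}{\lra}\;\ov{\B_t(G,F)} \]
which restricts, on the building, to the identity of $\B(G,F)$, i.e.\ $\iota\circ \theta_{\tau,F}=\theta_{t,F}$. Here $\B_\tau(Q_{ss},F)$ always denotes the image of $\B(Q_{ss},F)$ under the Berkovich map $\theta_{\tau,F}$, so that Proposition \ref{prop compactify rational} applied to the $F$-rational type $\tau$ produces strata written in exactly this notation. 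The existence of $\iota$ is the substance of \cite{rtw2010} Appendix C: although there is in general no morphism of flag varieties between $\Par_\tau(G)$ and $\Par_t(G)$, the images of the building and their closures, together with their stratifications, match up canonically — concretely, one checks that $\theta_{\tau,F}$ and $\theta_{t,F}$ send each apartment into open Bruhat cells whose closures are described by the same fan, and then glues over the $G(F)$-orbit of a fixed apartment. I would import this construction, after observing, as already noted after Proposition \ref{bc2}, that the arguments of \cite{rtw2010} section 4 and Appendix C go through for general, not necessarily $F$-rational, types.

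Granting the homeomorphism $\iota$, the rest is bookkeeping. Since $\tau$ is $F$-rational, Proposition \ref{prop compactify rational}(1) gives
\[ \ov{\B_\tau(G,F)}=\coprod_{\substack{Q\in \Par(G)(F)\\ Q\;\tau\tr{-relevant}}}\B_\tau(Q_{ss},F), \]
with $G(F)$ acting through $g\cdot \B_\tau(Q_{ss},F)=\B_\tau((gQg^{-1})_{ss},F)$ by part (2). Transporting along the $G(F)$-equivariant homeomorphism $\iota$, we obtain the same decomposition of $\ov{\B_t(G,F)}$, the boundary strata still being indexed by $\tau$-types — which is why the asserted formula features $\B_\tau(Q_{ss},F)$ rather than a ``$t$-type'' version. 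To rewrite the index set: every $F$-parabolic subgroup of $G$ is $G(F)$-conjugate to a unique standard one, and $\tau$-relevance is invariant under $G(F)$-conjugacy (the osculatory variety $\Osc_\tau(\cdot)$ being $G$-equivariant); hence the set of $\tau$-relevant $F$-parabolics is the disjoint union, over standard $\tau$-relevant $Q$, of their conjugates $\{gQg^{-1}:g\in G(F)\}$. Since parabolic subgroups are self-normalizing, the stratum $g\cdot\B_\tau(Q_{ss},F)$ depends only on the coset $gQ(F)$, so $\coprod_{g\in G(F)}g\,\B_\tau(Q_{ss},F)$ — read, as in \cite{rtw2010}, as the disjoint union of the \emph{distinct} translates — is precisely the collection of strata attached to the conjugacy class of $Q$, and strata attached to distinct standard $\tau$-relevant $Q$ are disjoint by Proposition \ref{prop compactify rational}(1). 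Assembling these identities yields the claimed stratification, the term $Q=G$ recovering the open dense stratum $\B_t(G,F)$.

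The main obstacle is the first step — the existence of $\iota$, equivalently the assertion that $\theta_{t,F}$ and $\theta_{\tau,F}$ have ``the same'' image and the same boundary behaviour. Injectivity of $\iota$ on the dense locus $\B_\tau(G,F)$ is formal once $t$ is non-degenerate (so that $\theta_{t,F}$ and $\theta_{\tau,F}$ are both embeddings), but injectivity along the boundary — that no two boundary strata get collapsed — is the delicate combinatorial point, and it is exactly here that the $\tau$-relevance condition intervenes; it requires the explicit fan description of the compactified apartments of \cite{rtw2010}. Everything else (base-change functoriality in an extension $L|F$, the description of the $G(F)$-action, local closedness of the strata) is then inherited from the rational case through $\iota$, just as in Proposition \ref{prop compactify rational}.
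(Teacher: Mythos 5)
Your proposal is correct and takes essentially the same route as the paper: identify $\ov{\B_t(G,F)}$ with the compactification attached to the $F$-rational type $\tau$ (relying on \cite{rtw2010}, Appendix C) and then transport the stratification of Proposition \ref{prop compactify rational}, sorting $\tau$-relevant parabolics into $G(F)$-orbits of standard ones. The paper realizes the identification you call $\iota$ concretely, by passing through the Borel type $t_B$ over a splitting field $L$ and projecting $\Par_{t_B}(G_L)^{\Berk}$ to both $\Par_{\tau}(G)_L^{\Berk}$ and $\Fl(G,\mu)_L^{\Berk}$ (this also pins down each stratum $\B_\tau(Q_{ss},F)$ inside $\Fl(G,\mu)_L^{\Berk}$ for later use), but this is the same mechanism you outsource to Appendix C.
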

\begin{proof}
For later applications, during the proof,
we reformulate the results of \cite{rtw2010} in terms of Levi subgroups. 

First, notice that even if $t$ is not $F$-rational, the Berkovich compactification still has the base change functoriality. Take any non-archimedean field extension $L|E$, through the Berkovich maps, the closure of the Bruhat-Tits building are the same, cf. Proposition \ref{bc2}.
Thus we can apply such base change technique. Now assume $L|E $ is a finite Galois extension that splits $G$. Let $t_B$ denote the $L$-type corresponding to Borel subgroups of $G_L$. Pick up an $L$-type $t_1$ that corresponds to the connected component $\Fl(G,\mu)_L$ of $\Par_t(G)_L$. Let $\tau$ be the $F$-rational type attached to $t$ as above.
We have the following two natural projection maps among Berkovich flag varieties:
\[\xymatrix{
	& \Par_{t_B}(G_L)^{\Berk} \ar[ld]  \ar[rd] \\
	\Par_{\tau}(G)_{L}^{\Berk} &  &\Par_{t_1}(G_L)^{\Berk}.}\] 
Note that $\Par_{t_1}(G_L)=\Fl(G,\mu)_L$.
Through the Berkovich maps and taking the closure of the Bruhat-Tits building inside these flag varieties, we get 
\[\overline{\B_t(G,F)}\simeq \overline{\B_{\tau}(G,F)}\]  and
the following commutative diagram (this is also used in Appendix C of \cite{rtw2010} in a similar way):
\[\xymatrix{
	& \overline{\B_{t_B}(G,F)} \ar[ld] \ar[rd] \\
	\overline{\B_{\tau}(G,F)}  & \cong   &\overline{\B_t(G,F)}.}\] 

On the other hand, for any $F$-parabolic subgroup $Q$ of $G$ and take a Levi subgroup $M$ for $Q$, through the osculatory  variety $\Osc_{t_{B}}(Q_L)$, we enlarge the diagram into the following one: 
\[\xymatrix{
	\B(M,F)\ar[r] & \Par_{t_B}(M_L)^{\Berk} \ar[r] & \Par_{t_B}(G_L)^{\Berk} \ar[ld] \ar[rd]& \\
	&\Par_{\tau}(G)_{L}^{\Berk} & & \Par_{t_1}(G_L)^{\Berk}}\]
By Proposition \ref{prop compactify rational}, $\B(M,F)\cong \B(Q_{ss}, F)$ contributes to the compactification $\overline{\B_{\tau}(G,F)}$. Let $Q$ run over $F$-parabolic subgroups for $G$. The union of these contributions is the whole space $\overline{\B_{\tau}(G,F)}$. Through the isomorphism between these two compactifications, we see that $\overline{\B_t(G,F)}$ is also a union of such contributions. 

We would like to see these contributions inside the $p$-adic flag variety $\Par_{t_1}(G_L)^{\Berk}=\Fl(G,\mu)_L^\Berk$ in a natural way.
To this end, notice that there exists a commutative diagram:
\[\xymatrix{
	\Par_{t_B}(M_L)^{\Berk} \ar[d] \ar[r] & \Par_{t_B}(G_L)^{\Berk} \ar[d]\\
	\Par_{t_1}(M_L)^{\Berk} \ar[r] & \Par_{t_1}(G_L)^{\Berk}.}\]
Therefore, we may rewrite the contribution from the Bruhat-Tits building $\B(M,F)$  to the type $t$ compactification $\overline{\B_t(G,F)}$ through the following map \[\B(M,F) \xrightarrow{\theta_{t_1,F}} \Par_{t_1}(M_L)^{\Berk} \hookrightarrow\Par_{t_1}(G_L)^{\Berk},\] and denote its image by $\B_{\tau}(M,F)$.  

To get disjoint unions, one just considers the $\tau$-relevant parabolic subgroups by Proposition \ref{prop compactify rational}. We have thus verified the proposition.
\end{proof}
In the above proof,
the contributions of $M$ are conjugated by $G(F)$-action. Therefore the locus \[\bigcup_{g \in G(F)}g\Big(\B_\tau(M,F)\Big)\]  only relies on the conjugacy class of $M$, independent of choices of parabolic subgroups $Q$ containing $M$.
Thus, we may also write the compactification as a union over all standard $F$-rational Levi subgroups
\[ \overline{\B_{t}(G,F)}= \bigcup_{M\,\tr{standard}}\bigcup_{g \in G(F)}g\Big(\B_\tau(M,F)\Big).\]

%\begin{rem}
%From the proof, we see that each $\B_\tau(M,F)$ will contribute to the compactification in the desired way. The non-trivial thing is that when $M$ runs over all $F$-rational Levi subgroups, they exhaust the whole compactification. To show this, we need the $F$-rational type $\tau$ as an auxiliary object. In fact, if $G$ is quasi-split, or more generally, there exists an $F$-rational type $t_0$ which is dominated by the type $t$, then we can also use $t_0$ to deduce this proposition without  appealing to Appendix C of \cite{rtw2010}.
%\end{rem}

\section{Bruhat-Tits buildings and $p$-adic period domains}
\label{newtonstrata}

From now on, we will always work with a \emph{minuscule} cocharacter $\mu: \G_m\ra G_{\ov{F}}$ (and its conjugacy class $\{\mu\}$), and discuss the basic relation between Bruhat-Tits buildings and $p$-adic period domains. 

\subsection{Berkovich spaces and adic spaces}
Since $p$-adic period domains are defined more conveniently using the theory of perfectoid spaces and diamonds (cf. \cite{sw2020} and \cite{Sch17}), we first briefly review the basic comparison between Berkovich analytic spaces (\cite{Ber90, Ber93}) and adic spaces (\cite{Hub96}).

Let $k$ be a complete non-archimedean field with residue field of characteristic $p$. Recall that an adic space $X$ over $k$ is called locally of finite type if it is locally of the form $\Spa(R, R^+)$, where $R^+=R^\circ$ and $R$ is a quotient of the algebra $k\lan T_1,\cdots, T_n\ran$ for some $n$. Such an affinoid $k$-algebra $(R,R^+)$ is called topologically of finite type. The adic space $X$ is called taut if it is quasi-separated and for any quasi-compact open subset $U\subset X$, the closure $\ov{U}$ of $U$ in $X$ is also quasi-compact. For example, if $X$ is partially proper over $k$ (cf. \cite{Hub96} Definition 1.3.3), then it is taut.
\begin{thm}[\cite{Hub96} Proposition 8.3.1 and Lemma 8.1.8]\label{thm Berk vs adic}
There is an equivalence  between 
\begin{itemize}
	\item 
the category of Hausdorff strictly $k$-analytic Berkovich spaces, and 
\item the category of taut adic spaces which are locally of finite type over $k$.
\end{itemize}
If $X^\Berk$ is mapped to $X^\ad$ under this equivalence, then there is an injective map of sets $X^\Berk \hookrightarrow X^\ad$ with image consisting of the subset of rank 1 valuation points. This map is not continuous in general, but there is a continuous retraction $X^\ad\ra X^\Berk$, identifying $X^\Berk$ as the maximal Hausdorff quotient of $X^\ad$.
\end{thm}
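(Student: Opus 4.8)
This statement is Huber's (\cite{Hub96} Proposition 8.3.1 and Lemma 8.1.8); the plan is to recall the structure of his argument. The strategy is to settle the affinoid case first and then glue. So let $(R,R^+)$ be an affinoid $k$-algebra topologically of finite type with $R^+=R^\circ$, so that $R$ is strictly $k$-affinoid. On the Berkovich side one has $\M(R)$, the space of bounded multiplicative seminorms on $R$; on the adic side $\Spa(R,R^+)$, the space of continuous valuations $v$ on $R$ with $v(f)\le 1$ for all $f\in R^+$. First I would observe that a point of $\Spa(R,R^+)$ whose valuation has rank $\le 1$ is precisely a bounded multiplicative seminorm on $R$, so that $\M(R)$ is canonically the set of rank-$1$ points of $\Spa(R,R^+)$; this is the inclusion $X^\Berk\hookrightarrow X^\ad$ of the statement, and it is visibly not continuous (the rank-$1$ points are dense, but a higher-rank point and a rank-$1$ generization of it cannot be separated by opens). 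Next I would build the retraction $r\colon\Spa(R,R^+)\to\M(R)$: send a valuation $v$ to its coarsening along the largest proper convex subgroup of the value group $\Gamma_v$, i.e. the unique rank-$\le 1$ vertical generization of $v$ (same support). Since coarsening never increases a value, $r(x)$ still satisfies $r(x)(R^+)\le 1$; and $r$ restricts to the identity on the rank-$1$ locus, so $r\circ(X^\Berk\hookrightarrow X^\ad)=\mathrm{id}$.

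The next step is to show $r$ is continuous and realizes $\M(R)$ as the maximal Hausdorff quotient. Continuity I would check by hand using that both spaces have bases of rational (Laurent) domains defined by non-strict inequalities among absolute values of elements of $R$, together with the compatibility of such inequalities with the coarsening operation; the target $\M(R)$ is compact Hausdorff by Berkovich's theory. For the universal property: given continuous $f\colon\Spa(R,R^+)\to Y$ with $Y$ Hausdorff and two points with $r(x)=r(x')$, both $x$ and $x'$ are specializations of the single point $r(x)=r(x')$, so $f(x),f(x')\in\ov{\{f(r(x))\}}=\{f(r(x))\}$ and hence $f(x)=f(x')$; as $r$ is surjective, $f$ factors uniquely through $r$. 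Matching the structure sheaves on the rank-$1$ locus is then immediate, since on rational subsets both sides are given by the same rational localizations $R\langle f/g\rangle$ (here one invokes Tate acyclicity on both sides).

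The remaining step is globalization. A Hausdorff strictly $k$-analytic Berkovich space $X^\Berk$ is assembled from strictly $k$-affinoid domains $\M(R_i)$ along a net; I would send each chart to $\Spa(R_i,R_i^\circ)$, glue along the induced open immersions of adic spaces to obtain an adic space $X^\ad$, and glue the affinoid retractions to a continuous map $X^\ad\to X^\Berk$ — the retractions agree on overlaps because $r$ is compatible with rational localizations and finite morphisms. One checks $X^\ad$ is locally of finite type over $k$ and taut, and conversely that every taut adic space locally of finite type over $k$ is covered by $\Spa(R_i,R_i^\circ)$ with each $R_i$ topologically of finite type and so assembles, via the $\M(R_i)$, into a Hausdorff strictly $k$-analytic Berkovich space. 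Functoriality of all the affinoid-level constructions in $(R,R^+)$ upgrades these assignments to mutually quasi-inverse functors; the retraction and its universal property globalize because both are local on $X^\ad$.

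The step I expect to be the main obstacle is precisely this last matching of finiteness and separation conditions: explaining why the adic spaces arising from Hausdorff strictly $k$-analytic Berkovich spaces are exactly the \emph{taut} ones, and conversely. Neither ``Hausdorff'' on the Berkovich side nor ``taut'' (quasi-separated, with quasi-compact closures of quasi-compact opens) on the adic side is obviously local, and both are statements about closures of quasi-compact pieces, which on the Berkovich side are governed by the relative boundary of affinoid domains; controlling this precisely — so as to get an equivalence of categories rather than merely a comparison on underlying sets — is the real content of \cite{Hub96} Lemma 8.1.8 and the surrounding analysis. By contrast, the bijection on rank-$1$ points, the discontinuity of the inclusion, and the sheaf comparison over the Berkovich locus are comparatively routine.
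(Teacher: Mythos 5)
The paper does not prove this statement at all—it is recalled verbatim from Huber (\cite{Hub96} Proposition 8.3.1 and Lemma 8.1.8)—and your sketch is a faithful outline of Huber's argument: identification of $\M(R)$ with the rank-$1$ points of $\Spa(R,R^\circ)$, retraction by passing to the maximal (rank-$1$, vertical) generization, the Hausdorff universal property via specialization, gluing along affinoid charts, and the correct identification of the Hausdorff/taut matching as the genuine content of Lemma 8.1.8. One small repair: the non-continuity of $X^\Berk\hookrightarrow X^\ad$ is not a separation phenomenon but is seen, e.g., from the rational subset $\{\,|T|\le|\pi|\,\}$ of the closed unit disc, which is open in the adic space while its trace on rank-$1$ points is a closed Berkovich subdisc and hence not open in $\M(k\langle T\rangle)$.
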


The underlying topological space of the  adic space $X^\ad$ is locally spectral. Recall that for a locally spectral space $Y$, its dimension $\dim\,Y$ is the supremum of the lengths $n$ of the chains of specializations \[x_0\succ x_1\succ \cdots \succ x_n\] of points of $X$, where the partial order $\succ$ is defined as $x\succ y\Leftrightarrow y\in \ov{\{x\}}$.
The underlying topological space of $X^\Berk$ is locally compact and Hausdorff. Its dimension $\dim X^\Berk$ is defined as the supremum of the Krull dimension $\dim \,R$ of the open affinoid subspaces $\M(R)\subset X^\Berk$. Then we have $\dim \,X^\ad=\dim\, X^\Berk$ under the equivalence of Theorem \ref{thm Berk vs adic}. 

Let $x\in X^\Berk$ be a point. We will also view it as a point of $X^\ad$ under the above inclusion $X^\Berk\subset X^\ad$. Consider the closure $\ov{\{x\}}\subset X^\ad$ of the subset $\{x\}\subset X^\ad$.  By definition, this is the set of specializations of $x$ in $X^\ad$.
Then we have $\ov{\{x\}}\subset \pi^{-1}(x)$, where $\pi: X^\ad\ra X^\Berk$ is the quotient map.

If $X$ is an algebraic variety (as usual, we refer to a scheme which is separated and locally of finite type) over $k$. Then we have the associated Berkovich space $X^\Berk$ (which is a Hausdorff strictly $k$-analytic space, cf. \cite{Ber90}) and adic space $X^\ad$ (which is taut and locally of finite type over $\Spa\,k$, cf. \cite{Hub96}). Then $X^\Berk$ corresponds to $X^\ad$ under the equivalence functor in Theorem \ref{thm Berk vs adic}.  If $U\subset X^\ad$ is a partially proper (cf. \cite{Hub96} Definition 1.3.3) open subspace, then $U$ is taut, and the associated Hausdorff strictly $k$-analytic space $U^\Berk$ under Theorem \ref{thm Berk vs adic} is given by the maximal Hausdorff quotient of $U$, which is an open subspace of $X^\Berk$.

Occasionally we will talk about diamonds, so we make a very brief review on the comparison between analytic adic spaces and diamonds, for details see \cite{Sch17, sw2020}.  Let $\Perf$ be the category of perfectoid spaces in characteristic $p$. There are two natural Grothendieck topologies on it: the pro-\'etale topology and the $v$-topology, cf. \cite{Sch17} section 8. By definition, a diamond is a pro-\'etale sheaf on $\Perf$ which can be written as a quotient $X/R$, with $X\in \Perf$ and $R$ a representable pro-\'etale relation, cf. \cite{Sch17} Definition 11.1. For any analytic adic space $X$ (see \cite{Hub96} page 39 or \cite{sw2020} subsection 4.3) over $\Spa\,\Z_p$, one can associate it a diamond $X^\Diamond$ as follows. For any $T\in \Perf$, $X^\Diamond(T)$ is the set of isomorphism classes of pairs $(T^\sharp, T^\sharp\ra X)$, where $T^\sharp$ is an untilt of $T$ over $\Z_p$, and $T^\sharp\ra X$ is a morphism of adic spaces over $\Z_p$. For any diamond $Y$, one has well defined notions of underlying topological space $|Y|$ and \'etale site $Y_{\tr{et}}$. If $Y=X^\Diamond$ for an analytic adic space $X$ over $\Spa\,\Z_p$, then we have (cf. \cite{Sch17} Lemma 15.6)
\[|X^\Diamond|\cong |X|, \quad X^\Diamond_{\tr{et}}\cong X_{\tr{et}}. \]
Moreover, $X^\Diamond$ is locally spatial in the sense of \cite{Sch17} Definition 11.17.  These discussions on analytic adic spaces over $\Z_p$ apply in particular to the above category of taut adic spaces which are locally of finite type over a complete non-archimedean field $k$ with residue field of characteristic $p$.

\subsection{Newton and Harder-Narasimhan stratifications of $p$-adic flag varieties}\label{subsection Newton and HN}
We first review two natural stratifications on the $p$-adic flag variety $\Fl(G,\mu)^{\ad}$ (or $\Fl(G,\mu)^{\Berk}$) in $p$-adic Hodge theory: Newton and Harder-Narasimhan stratifications.

Let $B(G)$ be the set of $\sigma$-conjugacy classes in $G(\breve{F})$. It admits an explanation as the set of isomorphism classes of isocrytals with $G$-structure (cf. \cite{kott1985} and \cite{RR}). There are two invariants attached to a $\sigma$-conjugacy class, given by the Newton map (cf. \cite{kott1985} section 4)
\[\nu: B(G)\lra \N(G)\]
and the Kottwitz map (cf. \cite{kott1997} 4.9 and 7.5, \cite{RR} Theorem 1.15)
\[\kappa: B(G)\lra \pi_1(G)_\Gamma.\]
The induced map \[(\nu,\kappa): B(G)\lra \N(G)\times \pi_1(G)_\Gamma\] is injective (cf. \cite{kott1997} 4.13).  On $B(G)$ we have a partial order: $b\leq b' \Leftrightarrow \nu(b)\leq \nu(b')$.
Let $B(G)_{basic}\subset B(G)$ be the subset of basic elements (cf. \cite{kott1985} section 5), which consists of those $b\in B(G)$ such that $\nu(b)$ factors through the center of $G$. Then the restriction of $\kappa$ induces a bijection (cf. \cite{kott1985} Proposition 5.6) \[\kappa: B(G)_{basic}\st{\sim}{\lra} \pi_1(G)_\Gamma.\]
We will view $B(G)$ as the set of isomorphism classes of $G$-bundles on the Fargues-Fontaine curve. More precisely, let $C|F$ be an algebraically closed perfectoid field, with tilt $C^\flat$. We get the associated Fargues-Fontaine curve $X=X_{C^\flat, F}$ over $F$, which can be viewed as either a one dimensional Noetherian scheme over $\Spec\,F$ or an adic space over $\Spa\,F$.
Fargues proved that there is a bijection of pointed sets (cf. \cite{cfs2021} Theorem 1.4):
\[B(G)\st{\sim}{\lra} H^1_{\tr{et}}(X, G),\quad b\mapsto \mathcal{E}_b. \]
Moreover, the Newton map and Kottwitz map both admit geometric interpretations in terms of $G$-bundles, for more details see \cite{cfs2021} Theorem 1.10.

Attached to the pair $(G, \{\mu^{-1}\})$, we have
 the Kottwitz set (cf. \cite{kott1997} section 6):  \[B(G,\mu^{-1})=\{b \in B(G)\,|\, v(b)\leq \mu^{-1,\diamond},\ \kappa (b)=\mu^{-1,\sharp}\},\] here we are using Galois average \[\mu^{-1,\diamond}=[\Gamma:\Gamma_{\mu^{-1}}]^{-1}\sum_{\tau \in \Gamma/\Gamma_{\mu^{-1}}}\mu^{-1,\tau}\in \N(G),\] and $\mu^{-1,\sharp}\in \pi_1(G)_{\Gamma}$. This is a  finite subset of $B(G)$ which contains a unique basic element (cf. \cite{kott1997} 6.4), and it only depends on the conjugacy class $\{\mu^{-1}\}$. We will also use the generalized Kottwitz set introduced in \cite{cfs2021}: for any $\epsilon\in \pi_1(G)_\Gamma$ and $\delta\in X_\ast(A)_\Q^+$ such that $\epsilon\equiv\delta$ in $\pi_1(G)_\Gamma\otimes\Q$, we set
 \[B(G,\epsilon, \delta)=\{b\in B(G)\,|\, \kappa(b)=\epsilon, \nu(b)\leq \delta\}. \]
 Then by definition we have \[B(G,\mu^{-1})=B(G, \mu^{-1,\sharp}, \mu^{-1,\diamond}).\]

A key property of the Kottwitz set is that we can pass to the adjoint group (see \cite{kott1997} 4.11 and 6.5): the natural map $G\ra G_{ad}$ induces a bijection \[B(G,\epsilon,\delta)\cong B(G_{ad},\epsilon_{ad}, \delta_{ad}).\] This property is suitable for our application, since  some geometric objects like the flag varieties and the Bruhat-Tits building only depend on $G_{ad}$. %This will simplify some arguments and will be used in next section to prove Corollary \ref{closedcor}.

%In $p$-adic Hodge theory, there are two closely related period maps (dual to each other), from the local Shimura varieties (or more generally, the moduli space of local shtukas) to the $B_{dR}^{+}$ affine Grassmannian. One is called the de Rham period map and another one is the Hodge-Tate period map. We refer to \cite{cfs2021} section 5 for more details. In this paper, we will use the Hodge-Tate period map. 

Now, we introduce the Newton stratification on the $p$-adic flag variety $\Fl(G,\mu)^{\ad}$ (and $\Fl(G,\mu)^{\Berk}$). It will be indexed by the Kottwitz set $B(G,\mu^{-1})$ (be careful there is a sign change). Roughly speaking, this is defined by modifying the trivial $G$-bundle over the Fargues-Fontaine curve. In particular, this stratification is stable under the natural $G(F)$-action on $\Fl(G,\mu)^\ad$,  as $G(F)$ is the automorphism group of the trivial $G$-bundle. More precisely, the construction actually involves the $B_{\dR}^+$-affine Grassmannian $\Gr_G^{B^+_{\dR}}$ introduced in \cite{sw2020} and the $v$-stack of $G$-bundles $\Bun_G$ on the Fargues-Fontaine curve  introduced in \cite{FS}. The $\Gr_G^{B^+_{\dR}}$ is a small $v$-sheaf over $\tr{Spd}\,F=(\Spa\,F)^\Diamond$. For each geometric conjugacy class $\{\mu\}$ of cocharacters of $G$, using Cartan decomposition one can define a sub $v$-sheaf $\Gr_{G,\mu}^{B_{\dR}^{+}}\subset \Gr_{G,E}^{B^+_{\dR}}$ over $\tr{Spd}\,E$, where $E=E(G, \{\mu\})$, which is the affine Schubert cell in this setting. See \cite{sw2020} Lecture 19 for more details. Back to our minuscule $\mu$,
the following $p$-adic  Bialynicki-Birula map (cf. \cite{cs2017} Proposition 3.4.3) from the affine Schubert cell attached to $\{\mu\}$  to the diamond flag variety \[\pi_{G,\mu}^{BB}: \Gr_{G,\mu}^{B_{\dR}^{+}}\longrightarrow \Fl(G,\mu)^{\ad,\Diamond}\] is an isomorphism, cf. \cite{cs2017} Theorem 3.4.5. On the other hand, there is a natural Beauville-Laszlo map \[BL_1: \Gr_{G}^{B_{\dR}^{+}} \longrightarrow \Bun_{G} \] relating $B_{\dR}^{+}$ affine Grassmannian with $\Bun_{G}$ by modifying the trivial $G$-bundle, cf. \cite{FS} III.3. By Fargues's theorem on the classification of $G$-bundles over the Fargues-Fontaine curve, we have a natural identification \[|\Bun_G| \cong B(G) \] as sets (cf. \cite{FS} Theorem III.2.2); indeed this is also a topological isomorphism with the order topology on $B(G)$ due to \cite{eva2024}. Combining these maps together, we get the desired morphism \[\Fl(G,\mu)^{\ad,\Diamond}\longrightarrow \Bun_G.\]
On topological spaces, we get a map \[|\Fl(G,\mu)^{\ad}|\lra B(G),\] called the Newton map. Explicitly, for any point $x\in \Fl(G,\mu)^{\ad}(C,C^+)$ with $C|E$ algebraically closed perfectoid field, by  Bialynicki-Birula and Beauville-Laszlo maps, we get a $G$-bundle $\mathcal{E}_{1,x}$ on the Fargues-Fontaine curve $X_{C^\flat, C^{+,\flat}}$ by modifying the trivial $G$-bundle using $x$ at the canonical point $\infty \in X_{C^\flat, C^{+,\flat}}$ corresponding the untilt $C$ of $C^\flat$. The isomorphism class of the $G$-bundle $\mathcal{E}_{1,x}$  defines an element  \[b(\mathcal{E}_{1,x})\in B(G)\] by Fargues's theorem. This is the point-wise description of the Newton map.
As in \cite{cs2017} Proposition 3.5.7 and the paragraph above  there,
 this map is semi-continuous (based on the result of Kedlaya-Liu \cite{KL} in the case $G=\GL_n$). Moreover,  it factors through the maximal Hausdorff quotient, which is exactly the topological space \[|\Fl(G,\mu)^{\Berk}|\] underlying the Berkovich space $\Fl(G,\mu)^{\Berk}$ by Theorem \ref{thm Berk vs adic}. By \cite{cs2017} Proposition 3.5.3, the image of the Newton map is contained in $B(G,\mu^{-1})$. For any $b \in B(G,\mu^{-1})$, let $\Fl(G,\mu)^{\ad, b}$ denote the fiber over $b$, then this is a locally closed subspace of $\Fl(G,\mu)^{\ad}$. In this way we get the Newton stratification for the adic space \[\Fl(G,\mu)^{\ad}=\coprod_{b\in B(G,\mu^{-1})}\Fl(G,\mu)^{\ad, b}.\]
 This stratification is invariant under the natural $G(F)$-action on $\Fl(G,\mu)^{\ad}$.
 We summarize the main properties of the Newton stratification  as follows.
 \begin{prop}\label{prop Newton}
 \begin{enumerate}
 	\item For each $b\in B(G,\mu^{-1})$, the corresponding stratum $\Fl(G,\mu)^{\ad, b}$  is non empty, in other words the image of $BL_1$ is exactly $B(G,\mu^{-1})$.
 	\item For each $b\in B(G,\mu^{-1})$, the dimension of the stratum (as a locally spectral space) is \[\dim\,\Fl(G,\mu)^{\ad, b}=\dim \,\Fl(G,\mu)-\lan\nu(b), 2\rho\ran=\lan \mu-\nu(b), 2\rho\ran,\] where $\rho$ is the half of positive absolute roots of $G$.
 	\item For each $b\in B(G,\mu^{-1})$, we have the closure relation
 	\[ \ov{\Fl(G,\mu)^{\ad, b}}=\coprod_{b'\geq b}\Fl(G,\mu)^{\ad, b'},\]
 	where $b'$ runs through the element $b'\in B(G,\mu^{-1})$ such that $b'\geq b$ for the partial order $\geq$.
 \end{enumerate}
 \end{prop}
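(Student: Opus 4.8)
The plan is to deduce all three assertions from the topological structure of the map to $\Bun_G$. Recall that, on underlying topological spaces, the Newton map factors as
\[ q\colon |\Fl(G,\mu)^{\ad}| \xrightarrow{\ \sim\ } |\Gr_{G,\mu}^{B_{\dR}^{+}}| \xrightarrow{\ |BL_1|\ } |\Bun_G|\,\cong\,B(G), \]
where the first arrow is the homeomorphism induced by the Bialynicki--Birula isomorphism (\cite{cs2017} Theorem 3.4.5) and the last identification is a homeomorphism once $B(G)$ carries the order topology, by \cite{eva2024}. The strata are by definition the fibres $\Fl(G,\mu)^{\ad,b}=q^{-1}(b)$, and $q$ is semi-continuous, i.e.\ continuous for the order topology on $B(G)$, by \cite{cs2017} Proposition 3.5.7. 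Thus the problem reduces to controlling the fibres of $q$ and the closures of these fibres.

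For the closure relation (3), note that in the order topology $\ov{\{b\}}=\{b'\in B(G)\mid b'\geq b\}$, so continuity of $q$ already gives $\ov{\Fl(G,\mu)^{\ad,b}}=\ov{q^{-1}(b)}\subseteq q^{-1}(\ov{\{b\}})=\coprod_{b'\geq b}\Fl(G,\mu)^{\ad,b'}$, where by (1) the union may be taken over $b'\in B(G,\mu^{-1})$. For the reverse inclusion I would prove that $q$ is an \emph{open} map of topological spaces; for an open map one has $q^{-1}(\ov{S})\subseteq\ov{q^{-1}(S)}$ for every subset $S$ of the target, and applying this to $S=\{b\}$ yields the claim. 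Openness of $q$ should be read off from the local structure of $\Bun_G$ and the properties of the maps $\pi_{G,\mu}^{BB}$ and $BL_1$ in \cite{FS}, \cite{cs2017}; alternatively, once (2) is available one can try to induct on the finite poset $B(G,\mu^{-1})$, starting from the open and dense stratum attached to the unique minimal --- hence basic --- element $b_0$. (See also \cite{shen2023} for the analogous statements.) This reverse inclusion is the only non-formal ingredient of (3).

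For (1), the containment $\mathrm{Im}(q)\subseteq B(G,\mu^{-1})$ is \cite{cs2017} Proposition 3.5.3: a type-$\mu$ modification of the trivial $G$-bundle has Kottwitz point $\mu^{-1,\sharp}$ and Newton point $\leq\mu^{-1,\diamond}$. Non-emptiness of every stratum is the assertion that for each $b\in B(G,\mu^{-1})$ the fibre $q^{-1}(b)$ --- equivalently, the space of type-$\mu$ modifications of the trivial $G$-bundle producing $\mathcal{E}_b$, a $B_{\dR}^{+}$-analogue of an affine Deligne--Lusztig variety --- is non-empty. For $G=\GL_n$ this amounts to an explicit construction of a modification whose resulting vector bundle has a prescribed Harder--Narasimhan polygon bounded by $\mu^{-1,\diamond}$, with \cite{KL} used to make the statement work in families; the general reductive case either reduces to $\GL_n$ by a faithful representation or is read off from the non-emptiness of moduli of local shtukas in \cite{sw2020}, \cite{FS}.

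For the dimension formula (2), I would first use that for minuscule dominant $\mu$ one has $\lan\mu,2\rho\ran=\dim\Fl(G,\mu)$, so the two displayed expressions agree and it suffices to establish $\dim\Fl(G,\mu)^{\ad,b}=\lan\mu-\nu(b),2\rho\ran$. The strategy is to compute the relative dimension of $q$ stratum by stratum: by \cite{FS} the Newton stratum $\Bun_G^{b}$ admits a chart $[\ast/\wt{G}_b]$ where $\wt{G}_b$ is an extension of $G_b(F)$ by a cohomologically smooth group of dimension $\lan\nu(b),2\rho\ran$, and the preimage $\Fl(G,\mu)^{\ad,b,\Diamond}$ is, over this chart, the stack quotient by $\wt{G}_b$ of the moduli of type-$\mu$ modifications with fixed target $\mathcal{E}_b$ (an infinite-level moduli space of local shtukas, of dimension $\lan\mu,2\rho\ran$ via its étale period map); subtracting $\lan\nu(b),2\rho\ran$ gives the asserted value. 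Translating between the dimension of this diamond and the dimension of the locally spectral space $|\Fl(G,\mu)^{\ad,b}|$ is done via the dictionary recalled around Theorem \ref{thm Berk vs adic} and the generalities on dimensions of locally spectral spaces in \cite{ext2022}. To summarise, the genuinely non-formal inputs are the non-emptiness of the fibres of $q$ in (1), the openness of $q$ (or a substitute) for the reverse inclusion in (3), and the relative-dimension computation in (2); the main obstacle is that each of these requires the harder structural theory of $\Bun_G$ and of modifications (\cite{cs2017}, \cite{FS}, \cite{KL}, \cite{shen2023}) rather than formal topology, and the work of the proof is to assemble them correctly.
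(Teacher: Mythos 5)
The paper does not reprove any part of this proposition; its proof is purely by citation: (1) is Rapoport's non-emptiness result (\cite{cs2017} Remark 3.5.8, \cite{shen2023} Proposition 6.7), (2) is \cite{cs2017} Proposition 4.2.23 together with \cite{cfs2021} Proposition 5.3 and \cite{shen2023} Proposition 3.1, and (3) is \cite{eva2024} Corollary 6.9. Your proposal instead tries to reprove these statements, and while the framing (factoring the Newton map through $\Gr_{G,\mu}^{B^+_{\dR}}\to\Bun_G$ and Viehmann's identification of $|\Bun_G|$ with $B(G)$ in the order topology) is correct, each of the ``non-formal ingredients'' you isolate is left essentially unproved, and in two places the proposed route would not work.

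The most serious gap is in (3). The formal step is fine: semicontinuity gives the easy inclusion, and for an open map $q$ one has $q^{-1}(\ov{S})\subset\ov{q^{-1}(S)}$. But openness of $q$ is not a simpler surrogate for the closure relation: on the finite poset $B(G,\mu^{-1})$ with the order topology, ``$q$ is open'' says precisely that every neighborhood of a point in the stratum of $b$ meets the stratum of every $b'\leq b$, which, quantified over all pairs, \emph{is} the hard inclusion you want. So the reduction is circular, no argument for openness is given (``should be read off from the local structure of $\Bun_G$'' is not one), and the fallback of dimension counting plus induction on the poset only yields density of the basic stratum, not which higher strata lie in the closure of a given non-basic one. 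This direction is a genuine theorem of Viehmann (\cite{eva2024} Corollary 6.9), proved by constructing explicit families of modifications degenerating from $b$ to the $b'\geq b$, not by formal topology. In (1), the suggested reduction of non-emptiness to $\GL_n$ via a faithful representation does not work: realizability of the induced class by a modification of vector bundles neither implies nor is implied in any evident way by realizability of $b$ by a $G$-modification, and quoting non-emptiness of the corresponding moduli of shtukas is the same statement under another name, whose proof in the literature is exactly Rapoport's argument that the paper cites. For (2), your chart/relative-dimension heuristic is essentially the strategy carried out in \cite{cfs2021} Proposition 5.3 and \cite{shen2023} Proposition 3.1, but as written it is an outline: the identification of the fibres over the chart of $\Bun_G^b$, the dimension $\lan\nu(b),2\rho\ran$ of $\wt{G}_b$, and the passage from the dimension of the diamond to that of the locally spectral space $|\Fl(G,\mu)^{\ad,b}|$ all require the bookkeeping done in those references.
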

\begin{proof}
(1) is due to Rapoport, see \cite{cs2017} Remark 3.5.8 or \cite{shen2023} Proposition 6.7.

(2) was proved in special case in \cite{cs2017} Proposition 4.2.23, and in general case in \cite{cfs2021} Proposition 5.3 and \cite{shen2023} Proposition 3.1.

(3) was proved in \cite{eva2024} Corollary 6.9.
\end{proof} 
 For each $b\in B(G,\mu^{-1})$, we get a corresponding locally closed subspace $\Fl(G,\mu)^{\Berk,b}\subset \Fl(G,\mu)^{\Berk}$ 
 and a $G(F)$-invariant stratification of the Berkovich space \[\Fl(G,\mu)^{\Berk}=\coprod_{b\in B(G,\mu^{-1})}\Fl(G,\mu)^{\Berk,b}.\]
 Recall that
 the Kottwitz set $B(G,\mu^{-1})$ contains a unique basic element $b_0$, which is minimal with respect to the partial order $\leq$. The corresponding basic stratum $\Fl(G,\mu)^{\ad,b_0}$ (resp. $\Fl(G,\mu)^{\Berk,b_0}$)  is  open in $\Fl(G,\mu)^{\ad}$ (resp. $\Fl(G,\mu)^{\Berk}$). Moreover, $\Fl(G,\mu)^{\Berk,b_0}$ is the Hausdorff strictly Berkovich space corresponding to the adic space $\Fl(G,\mu)^{\ad,b_0}$  by Theorem \ref{thm Berk vs adic}.
 We call both \[\Fl(G,\mu)^{\ad,b_0}\quad \tr{and}\quad \Fl(G,\mu)^{\Berk,b_0}\] the $p$-adic period domains\footnote{They arise as the target of the Hodge-Tate period map for the local Shimura variety attached to the triple $(G, \{\mu^{-1}\}, b_0)$ at infinite level.
 	In \cite{eva2024} this is called the admissible locus, although this is slightly different from the original terminology in the de Rham setting as in \cite{RZ96} or \cite{sw2020}. In section 6, we will translate back to the de Rham setting by introducing the dual local Shimura datum.} attached to $(G, \{\mu^{-1}\}, b_0)$. By Proposition \ref{prop Newton} (3), the subspace $\Fl(G,\mu)^{\Berk,b_0}$ is dense in $\Fl(G,\mu)^{\Berk}$.
 	
 The Harder-Narasimhan stratification on $\Fl(G,\mu)^{\ad}$ is based on a semi-continuous function
 \[\HN: |\Fl(G,\mu)^{\ad}|\lra B(G,\mu^{-1}),\]
 which can be constructed by\footnote{For both versions, as usual we need the Tannakian formalism to deal with general reductive groups.} either the theory of Harder-Narasimhan filtrations of filtered vector spaces as in \cite{DOR10}, or the theory of Harder-Narasimhan filtrations of  admissible modifications of vector bundles on the Fargues-Fontaine curve as in \cite{shen2023} section 3. Using the later approach, for  any $x\in \Fl(G,\mu)^{\ad}(C,C^+)$ with $C|E$ algebraically closed perfectoid field, $\HN(x)$ is given by the HN vector of the modification triple $(\mathcal{E}_1, \mathcal{E}_{1,x}, f_x)$, see \cite{shen2023} for more details.  In particular, we get a stratification
 \[\Fl(G,\mu)^{\ad}=\coprod_{b\in B(G,\mu^{-1})}\Fl(G,\mu)^{\ad, \HN=b}, \]
 with each stratum $\Fl(G,\mu)^{\ad, \HN=b}$ locally closed in $\Fl(G,\mu)^{\ad}$. Moreover, this stratification is also $G(F)$-invariant.
 Unfortunately, we do not know enough information on this stratification like Proposition \ref{prop Newton} (neither the non emptiness of $\Fl(G,\mu)^{\ad, \HN=b}$, nor dimension formula, nor closure relation, for example see \cite{shen2023} for more discussions). On the other hand, there is a unique open stratum $\Fl(G,\mu)^{\ad, \HN=b_0}$. All we will need later is about this stratum, which is relatively easier to study.
 \begin{prop}\label{prop HN}
 We have a $G(F)$-equivariant inclusion
 \[\Fl(G,\mu)^{\ad, b_0}\subset \Fl(G,\mu)^{\ad, \HN=b_0}.\]
 In particular, the open Harder-Narasimhan stratum $\Fl(G,\mu)^{\ad, \HN=b_0}$ is dense in the adic flag variety $\Fl(G,\mu)^{\ad}$.
 \end{prop}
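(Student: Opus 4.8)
The plan is to reduce the containment to a pointwise statement on the Fargues--Fontaine curve and then to the case $G=\GL_n$, where it is the classical ``admissible $\Rightarrow$ weakly admissible'' implication. First, the $G(F)$-equivariance is automatic, since both $\Fl(G,\mu)^{\ad,b_0}$ and $\Fl(G,\mu)^{\ad,\HN=b_0}$ are $G(F)$-stable subsets of $|\Fl(G,\mu)^{\ad}|$ and the asserted map is the inclusion of subspaces. For the containment itself, both the Newton invariant and the Harder--Narasimhan invariant of a point are determined, after base change to an algebraically closed perfectoid field, by the associated modification of the trivial $G$-bundle on the Fargues--Fontaine curve; so it suffices to show that for every $x\in\Fl(G,\mu)^{\ad}(C,C^+)$ with $C|E$ algebraically closed perfectoid,
\[b(\mathcal{E}_{1,x})=b_0\ \Longrightarrow\ \HN(x)=b_0,\]
that is, if the modification $\mathcal{E}_{1,x}$ of the trivial $G$-bundle is isomorphic to the basic $G$-bundle $\mathcal{E}_{b_0}$, then the $G$-filtration defined by $x$ is Harder--Narasimhan semistable. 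This is precisely \cite{shen2023} Proposition 3.4, and I would recall its proof.

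One first reduces to $G=\GL_n$. A point $x$ is an exact $\otimes$-filtration of $\Rep_F G$ over $C$; by the Tannakian formalism (as in \cite{DOR10} and \cite{shen2023}), $\HN(x)=b_0$ can be tested on the filtered $C$-vector spaces $\rho_\ast(V,\Fil_x)$ for all $\rho\in\Rep_F G$, while $b(\mathcal{E}_{1,x})=b_0$ amounts to the semistability of $\rho_\ast\mathcal{E}_{1,x}$ for all such $\rho$. Hence it is enough to treat $\GL_n$. (Alternatively one may first pass to $G_{ad}$, using the bijection $B(G,\epsilon,\delta)\cong B(G_{ad},\epsilon_{ad},\delta_{ad})$ and the compatibility of both stratifications with $G\to G_{ad}$ noted in Sections \ref{ber} and \ref{newtonstrata}.)

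For $G=\GL_n$, $x$ is a filtered $C$-vector space $(V,\Fil_x^\bullet)$ of type $\mu$; since $\mathcal{E}_1=V\otimes\mathcal{O}_X$ is trivial and $\mu$ is minuscule, $\mathcal{E}_{1,x}$ is the one-point modification of $\mathcal{E}_1$ determined by $\Fil_x$, with $\deg\mathcal{E}_{1,x}=\langle\mu,2\rho\rangle=t_H(V)$. For a subspace $0\neq V'\subseteq V$ with the induced filtration $\Fil_x^\bullet\cap V'$, the corresponding modification $\mathcal{E}_{1,x}'$ of the trivial sub-bundle $V'\otimes\mathcal{O}_X$ is a sub-sheaf of $\mathcal{E}_{1,x}$ of rank $\dim V'$ and degree $t_H(V')$. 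By definition of the Harder--Narasimhan stratification, together with the GIT description of $\Fl(G,\mu)^{ss}$ (cf.\ \cite{pv1992} and \cite{DOR10} Theorem 9.7.3), $\HN(x)=b_0$ is equivalent to the semistability of $(V,\Fil_x)$, i.e.\ to $t_H(V')/\dim V'\le t_H(V)/\dim V$ for all such $V'$. Now if $b(\mathcal{E}_{1,x})=b_0$, then $\mathcal{E}_{1,x}$ is semistable of slope $t_H(V)/\dim V$, so applying semistability to the sub-sheaf $\mathcal{E}_{1,x}'$ gives
\[\frac{t_H(V')}{\dim V'}=\frac{\deg\mathcal{E}_{1,x}'}{\mathrm{rk}\,\mathcal{E}_{1,x}'}\leq\frac{\deg\mathcal{E}_{1,x}}{\mathrm{rk}\,\mathcal{E}_{1,x}}=\frac{t_H(V)}{\dim V}.\]
Hence $(V,\Fil_x)$ is semistable and $\HN(x)=b_0$. (The two conditions differ in general, since an arbitrary sub-bundle of $\mathcal{E}_{1,x}$ need not arise from a subspace of $V$; this is why $\Fl(G,\mu)^{b_0}$ can be strictly smaller than $\Fl(G,\mu)^{ss}$.)

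The density statement is then formal: by Proposition \ref{prop Newton}(3) and the minimality of $b_0$ in $B(G,\mu^{-1})$, one has $\overline{\Fl(G,\mu)^{\ad,b_0}}=\coprod_{b'\geq b_0}\Fl(G,\mu)^{\ad,b'}=\Fl(G,\mu)^{\ad}$, so $\Fl(G,\mu)^{\ad,b_0}$ is dense, and therefore so is the larger subset $\Fl(G,\mu)^{\ad,\HN=b_0}$. I expect the main obstacle to lie in the second paragraph: carefully matching the definition of the Harder--Narasimhan vector used in \cite{shen2023} (and its compatibility with Tannakian operations and with base change to a geometric point) with the filtered-vector-space / GIT picture of the third paragraph, including the bookkeeping of degrees of modifications, normalizations, and Galois averages in the non-split case. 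Once these conventions are aligned, the essential point is just the displayed slope inequality, whose only geometric input is that $\mathcal{E}_{1,x}'$ sits inside the semistable bundle $\mathcal{E}_{1,x}$.
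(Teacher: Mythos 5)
Your overall strategy is the right one: the paper itself gives no argument for this proposition but simply cites \cite{shen2023} subsection 3.3 and \cite{eva2024} Lemma 4.7, and what you are doing is reconstructing that proof (pointwise reduction to the Fargues--Fontaine curve, Tannakian reduction to $\GL_n$, then a slope comparison inside the semistable bundle $\mathcal{E}_{1,x}$). The slope inequality at the end is indeed the essential point of the cited proofs, and your derivation of density from Proposition \ref{prop Newton}(3) and the minimality of $b_0$ is fine.

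There is, however, a genuine gap in the third paragraph: you test Harder--Narasimhan semistability against arbitrary subspaces $V'\subseteq V$ of the $C$-vector space $V$. With that reading the claimed equivalence ``$\HN(x)=b_0$ iff $t_H(V')/\dim V'\le t_H(V)/\dim V$ for all $V'$'' is false: taking $V'=\Fil_x^1$ (the $d$-dimensional jump of a minuscule filtration of type $(1^d,0^{n-d})$) gives induced slope $1>d/n$, so no point would satisfy it. The whole content of HN-semistability (equivalently weak admissibility, equivalently the GIT semistability of \cite{pv1992} and \cite{DOR10} Theorem 9.7.3, which is taken with respect to $F$-rational one-parameter subgroups) is that the test objects are the \emph{$F$-rational} subspaces $V'\subset F^n$, with $V'_C$ given the induced filtration; this rationality is also exactly why the HN strata are only $G(F)$-invariant and not $G(C)$-invariant. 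Correspondingly, your auxiliary sub-sheaf only exists in the rational case: since $H^0(X,\mathcal{O}_X)=F$, a constant sub-bundle $V'\otimes\mathcal{O}_X\subset\mathcal{E}_1$ makes sense only for $F$-subspaces $V'$. Once $V'$ is restricted to $F$-rational subspaces the displayed inequality does prove semistability in the correct sense and the argument closes, matching the cited proofs. Two smaller points to fix: the identity $\deg\mathcal{E}_{1,x}=\langle\mu,2\rho\rangle$ is a slip, since $\langle\mu,2\rho\rangle=\dim\Fl(G,\mu)$ while the degree of the modification is $\pm\sum_i\mu_i=\pm t_H(V)$ (the sign depending on the $B(G,\mu^{-1})$ normalization used in the paper); and in the Tannakian reduction you should say explicitly that basicness of $\mathcal{E}_{1,x}$ is being used through semistability of the associated $G$-bundle (equivalently of all $\rho_*\mathcal{E}_{1,x}$), which for basic $b_0$ follows from Fargues' classification.
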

 \begin{proof}
 	See \cite{shen2023} subsection 3.3 or \cite{eva2024} Lemma 4.7.
 \end{proof}
 From the description of the Harder-Narasimhan stratification in terms of admissible modifications of $G$-bundles on the Fargues-Fontaine curve, together with the fact that for any $S=\Spa(R, R^+)\in \Perf$, the categories of $G$-bundles on the relative Fargues-Fontaine curves $X_{R, R^+}$ and $X_{R, R^\circ}$ respectively are equivalent (also deduced by the work of Kedlaya-Liu \cite{KL}),  the map $\HN$ factors through the Berkovich flag variety $\Fl(G,\mu)^{\Berk}$.  Therefore, we get a $G(F)$-invariant stratification \[\Fl(G,\mu)^{\Berk}=\coprod_{b\in B(G,\mu^{-1})}\Fl(G,\mu)^{\Berk, \HN=b},\] with each stratum $\Fl(G,\mu)^{\Berk, \HN=b}$ locally closed. The stratum $\Fl(G,\mu)^{\Berk, \HN=b_0}$ attached to $b_0$ is open, which corresponds to the open adic stratum $\Fl(G,\mu)^{\ad, \HN=b_0}$ under Theorem \ref{thm Berk vs adic}.
 By Proposition \ref{prop HN} we have a $G(F)$-equivariant inclusion of open subspaces of $\Fl(G,\mu)^\Berk$ \[\Fl(G,\mu)^{\Berk, b_0}\subset \Fl(G,\mu)^{\Berk, \HN=b_0}.\]
 
 We call \[\Fl(G,\mu)^{\ad, \HN=b_0}\quad (\tr{resp.}\quad \Fl(G,\mu)^{\Berk, \HN=b_0})\] the semistable locus\footnote{Our terminology is compatible with that in \cite{DOR10}. In \cite{eva2024} it is called weakly admissible locus, although the setting is slightly different from the original de Rham setting as in \cite{RZ96} Chapter 1 or \cite{DOR10} Part 3. Again, in section 6 we will translate back to the de Rham setting by introducing the dual local Shimura datum.} in  $\Fl(G,\mu)^{\ad}$ (resp. $\Fl(G,\mu)^{\Berk}$). By \cite{DOR10} XI.1 (see also \cite{RZ96} proof of Proposition 1.36), we can describe the complement:
 \[\Fl(G,\mu)^{\ad}\setminus  \Fl(G,\mu)^{\ad, \HN=b_0}=\bigcup_{i\in I}G(F)Z_i,\] where $(Z_i)_{i\in I}$ is a finite collection of Zariski closed Schubert varieties. Similarly for the Berkovich version.
 
\subsection{Embeddings of Bruhat-Tits buildings into $p$-adic period domains}

After the above preparations, we can discuss the relation between the Berkovich map \[\theta: \B(G,F)\lra \Fl(G,\mu)^{\Berk} \]
and the Newton stratification \[\Fl(G,\mu)^{\Berk}=\coprod_{b\in B(G,\mu^{-1})}\Fl(G,\mu)^{\Berk,b}.\]
As before we denote $\B_t(G,F)=\theta(\B(G,F))$.
 As said in section \ref{ber}, the main intuition is that the image of the Berkovich map consists of \emph{very generic} points. As a warm up, we first deduce the following theorem concerning the semistable locus.

\begin{thm}\label{thm BT vs ss}
Through the Berkovich map, the Bruhat-Tits building $\B(G,F)$ is mapped into the semistable locus, i.e. $\B_t(G,F)\subset \Fl(G,\mu)^{\Berk, \HN=b_0}$.
\label{weak}
\end{thm}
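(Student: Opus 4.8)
The plan is to combine the explicit description of the Berkovich map in Proposition \ref{explicit} with the description, recalled at the end of subsection \ref{subsection Newton and HN}, of the complement of the semistable locus: by \cite{DOR10} XI.1 one has
\[\Fl(G,\mu)^{\Berk}\setminus\Fl(G,\mu)^{\Berk,\HN=b_0}=\bigcup_{i\in I}G(F)\,Z_i^{\Berk},\]
with $(Z_i)_{i\in I}$ a \emph{finite} family of Zariski closed Schubert varieties. Since the semistable locus is open and dense by Proposition \ref{prop HN}, each $Z_i$ is a \emph{proper} closed subvariety, so $\dim Z_i<d:=\dim\Fl(G,\mu)$. Because $\theta$ is $G(F)$-equivariant and $\B_t(G,F)$ is $G(F)$-stable, the assertion $\B_t(G,F)\subset\Fl(G,\mu)^{\Berk,\HN=b_0}$ is equivalent to: $\theta(x)\notin Z_i^{\Berk}$ for all $x\in\B(G,F)$ and all $i\in I$ --- indeed $\theta(x)\in gZ_i^{\Berk}$ would give $\theta(g^{-1}x)=g^{-1}\theta(x)\in Z_i^{\Berk}$, and $g^{-1}x$ again ranges over $\B(G,F)$.

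Next I would reduce to the case of a special vertex. Fix $x\in\B(G,F)$ and $i\in I$. By \cite{rtw2010} Proposition 1.6 there is a non-archimedean extension $K|E$, which we may enlarge so that $G_K$ is split, such that the image $x_K\in\B(G,K)$ of $x$ is a special vertex; set $y:=\theta_{\mu,K,K}(x_K)\in\Fl(G,\mu)^{\Berk}_K$. By the base change functoriality of Proposition \ref{bc1} we have $\theta(x)=\tr{pr}_{K|E}(y)$, and since $\tr{pr}_{K|E}^{-1}(Z_i^{\Berk})=(Z_i\times_E K)^{\Berk}$ is a Zariski closed subvariety of $\Fl(G,\mu)_K$ of dimension $<d$, the hypothesis $\theta(x)\in Z_i^{\Berk}$ would force $y\in(Z_i\times_E K)^{\Berk}$. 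So it suffices to show that $y$ lies on no proper closed subvariety of $\Fl(G,\mu)_K$.

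The crucial point is that $y$ is a \emph{generalized Gauss point}. Applying Proposition \ref{explicit} (with a maximal $K$-split torus $T$ adapted to $P=P_\mu$ and the opposite unipotent $N^{op}$), the point $y$ is the Gauss point of the open Bruhat cell $\Omega(T,P)^{\Berk}\cong(N^{op})^{\Berk}\cong\Spec K[(X_\alpha)_{\alpha\in\Psi}]$, i.e. the multiplicative norm $\sum_v a_vX^v\mapsto\max_v|a_v|$; its residue field $\wt{\mathcal{H}(y)}$ is the purely transcendental extension $\wt{K}(X_\alpha:\alpha\in\Psi)$, so $\tr{tr.deg}(\wt{\mathcal{H}(y)}\,|\,\wt{K})=|\Psi|=\dim N^{op}=d$. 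Viewing $y$ as a rank-one point of the adic space $\Fl(G,\mu)^{\ad}_K$, the dimension theory of locally spectral spaces (as used in \cite{ext2022}) gives $\dim\ov{\{y\}}=\tr{tr.deg}(\wt{\mathcal{H}(y)}\,|\,\wt{K})=d$. If $y$ lay on some proper closed $Z\subsetneq\Fl(G,\mu)_K$, then $\ov{\{y\}}\subset Z^{\ad}$ would yield $d=\dim\ov{\{y\}}\le\dim Z<d$, a contradiction; this completes the proof.

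I expect the only real difficulty here to be organizational rather than conceptual: one must check that the formula of Proposition \ref{explicit} genuinely exhibits $y$ as a Gauss point on the \emph{entire} open cell $\Omega(T,P)$ (so that the residue transcendence degree is exactly $d$ and not smaller), and that the Berkovich--adic dictionary, the base change along $K|E$, and the dimension formalism of \cite{ext2022} all fit together --- routine, but requiring care. The one substantial external ingredient beyond \cite{rtw2010} is the Dat--Orlik--Rapoport description of the complement of the semistable locus as a union of Schubert varieties, since it is exactly what turns the statement into a dimension count: every point of $\theta(\B(G,F))$ is ``too generic'' to meet a positive-codimension Schubert variety. (To upgrade this to the stronger statement that $\theta(\B(G,F))$ lands in the \emph{basic Newton} stratum, one would additionally invoke the dimension formula for Newton strata, Proposition \ref{prop Newton}(2), since non-basic Newton strata are only locally closed.)
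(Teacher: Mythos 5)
Your proposal is correct, but at the decisive step it follows a different (and heavier) mechanism than the paper's own proof of this theorem. The paper also reduces, via the base change functoriality of Proposition \ref{bc1}, to the explicit formula of Proposition \ref{explicit} and to the description of the complement of the semistable locus as a union of $G(F)$-translates of proper Zariski closed Schubert varieties; but the genericity is then extracted in a more elementary way: a point in the image of an apartment is a multiplicative \emph{norm} (not merely a seminorm) on the coordinate ring of the open Bruhat cell, so its support is the generic point of the cell, hence of the irreducible flag variety, and therefore it cannot lie on any proper Zariski closed subvariety --- no dimension theory is required. Consequently the paper only needs a finite splitting extension $L|F$ and handles every point of an apartment at once through Proposition \ref{explicit}(2), rather than passing, point by point, to a large field $K$ over which $x_K$ becomes a special vertex. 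Your argument replaces this generic-support observation by the computation $\tr{tr.deg}(\wt{\mathcal{H}(y)}|\wt{K})=\dim\,\Fl(G,\mu)$ together with the formula $\dim\,\ov{\{y\}}=\tr{tr.deg}(\wt{\mathcal{H}(y)}|\wt{K})$ from \cite{ext2022}, applied against $\dim Z_{i,K}<\dim\,\Fl(G,\mu)$; this is precisely the mechanism the paper reserves for the stronger Theorem \ref{thm BT vs p-adic period}, where the Newton strata are only locally closed and the support argument no longer suffices. What your route buys is uniformity --- as you note, the same dimension count, fed with Proposition \ref{prop Newton}(2) instead of the Schubert-variety bound, immediately yields the basic Newton stratum --- at the cost of invoking \cite{ext2022} and the special-vertex reduction where the paper's proof gets by with the observation that a norm has zero kernel. (Both arguments implicitly use that $\Fl(G,\mu)$ is geometrically irreducible, so that each $Z_i$, and its base change to $K$, is of strictly smaller dimension; your steps identifying $y$ with the Gauss point of the whole cell, and the compatibility of the Berkovich--adic inclusions with closed subvarieties, are all as in the paper's proof of Theorem \ref{thm BT vs p-adic period}.)
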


\begin{proof}

Take a finite field extension $L|F$ that splits the group $G$. In fact we will prove a stronger result: $\theta_L(\B(G,L))$ is contained in the $\Fl(G,\mu)^{\Berk, \HN=b_0}_L$.  

Recall Proposition \ref{explicit} which computes the Berkovich map explicitly. We still use those notions with replacing the base field by $L$.
By Proposition \ref{explicit}, each point $x$ inside the image of the apartment is in fact a norm (instead of merely semi-norm) on the polynomial ring. Therefore under the natural map $\Omega(T,P)^{\Berk} \longrightarrow \Omega(T,P)$ (taking support), the point $x$ will correspond to the generic point of that variety. 

In the following we will pass to adic spaces. Consider the continuous quotient map $\Omega(T,P)^{\ad} \ra \Omega(T,P)^{\Berk} $ and the (discontinuous) inclusion $\Omega(T,P)^{\Berk}\subset \Omega(T,P)^{\ad} $.
We have the following commutative diagram of continuous maps between locally spectral spaces:

\[\xymatrix{
\Omega(T,P)^{\ad} \ar[d] \ar@{^{(}->}[r] & \Fl(G_L, \mu)^{\ad} \ar[d] \\
\Omega(T,P) \ar@{^{(}->}[r] & \Fl(G_L,\mu).
}\]
Both horizontal maps are open embeddings. In particular, the  map of the bottom line will send the generic point of $\Omega(T,P)$ to the generic point of $\Fl(G_L,\mu)$. Therefore the point $x$ is \emph{generic} in the adic space $\Fl(G_L,\mu)^{\ad}$, in the sense that it is in the fiber of the generic point of $\Fl(G_L,\mu)$ under the projection $\Fl(G_L,\mu)^{\ad}\ra \Fl(G_L,\mu)$. In particular,
it can not be contained in any proper Zariski closed subspace. On the other hand, the semistable locus $\Fl(G_L,\mu)^{\ad, \HN=b_0}$  is an open subspace of $\Fl(G_L,\mu)^{\ad}$ whose complement is  a profinite union of proper Zariski closed subspaces (some  Schubert varieties). Therefore $x$ lies in such locus. 

Passing back to Berkovich spaces, we get $x\in\Fl(G_L,\mu)^{\Berk, \HN=b_0}$.  Notice that the Bruhat-Tits building $\B(G,L)$ is a union of apartments. Therefore it lies in this $L$-semistable locus.
We can view $\B(G,F)$ as a subspace of $\B(G,L)$, which is mapped into the $L$-semistable locus  $\Fl(G,\mu)^{\Berk, \HN=b_0}_{L}$.  By the base change functoriality of the map $\theta$ (cf. Proposition \ref{bc1}), we have $\B_t(G,F)=\theta(\B(G,F))\subset \Fl(G,\mu)^{\Berk, \HN=b_0}$.
\end{proof}

Next, we will prove a stronger theorem, showing that the image $\B_t(G,F)=\theta(\B(G,F))$ in fact totally  lies in the $p$-adic period domain $\Fl(G,\mu)^{\Berk, b_0}$. This relies on some dimension arguments and basic geometric properties of the Newton stratification as in Proposition \ref{prop Newton}.
 
\begin{thm}\label{thm BT vs p-adic period}
Through the Berkovich map, the Bruhat-Tits building $\B(G,F)$ is mapped in to the $p$-adic period domain, i.e. $\B_t(G,F)\subset \Fl(G,\mu)^{\Berk, b_0}$.
\label{main1}
\end{thm}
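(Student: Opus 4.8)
The plan is to follow the same strategy as in the proof of Theorem \ref{thm BT vs ss}, but extract sharper information about the point $\theta(x)$ than merely ``it avoids proper Zariski closed subspaces.'' First I would reduce to the situation over a large non-archimedean field $K|E$: by the base change functoriality of the Berkovich map (Proposition \ref{bc1}), for any $x\in\B(G,F)$ and any $K$, the image $\theta(x)$ is the projection under $\tr{pr}_{K|E}$ of $\theta_{\mu,K,K}(x_K)$, where $x_K\in\B(G,K)$. Choosing $K$ large enough that $G_K$ is split and $x_K$ becomes a special vertex (possible by \cite{rtw2010} Proposition 1.6), I can compute $y:=\theta_{\mu,K,K}(x_K)$ explicitly via Proposition \ref{explicit}(1): it is the Gauss point of the open Bruhat cell $\Omega(T,P)\cong\Spec\,K[(X_\alpha)_{\alpha\in\Psi}]$, where $\#\Psi=\dim\,\Fl(G,\mu)$. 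The key numerical input is that the complete residue field $\mathcal{H}(y)$ at this Gauss point has residue field $\wt{\mathcal{H}(y)}$ of transcendence degree exactly $\#\Psi=\dim\,\Fl(G,\mu)$ over $\wt{K}$ — indeed the Gauss norm on a polynomial ring in $N$ variables produces precisely $N$ transcendental residues.

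Next I would pass to adic spaces. Viewing $y$ as a point of $\Fl(G,\mu)^{\ad}_K$ through the (discontinuous) inclusion $\Fl(G,\mu)^{\Berk}_K\subset\Fl(G,\mu)^{\ad}_K$, I consider its closure $\ov{\{y\}}$, i.e.\ the set of specializations of $y$ in the adic space. A standard fact about adic points (for instance via the dimension theory of locally spectral spaces, cf.\ \cite{ext2022}) gives
\[\dim\,\ov{\{y\}}=\tr{tr.deg}\big(\wt{\mathcal{H}(y)}\,\big|\,\wt{K}\big)=\dim\,\Fl(G,\mu).\]
On the other hand, by Proposition \ref{prop Newton}(3) the closure of any Newton stratum is a union of Newton strata, so $\ov{\{y\}}$ is contained in the closure of the unique stratum containing $y$; hence $\ov{\{y\}}$ meets, and therefore (being irreducible with generic point $y$, and the Newton stratification being one of locally closed pieces with the closure relation above) is swept out by finitely many Newton strata, all of dimension at least that of the stratum of $y$. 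Wait — more carefully: $y$ lies in exactly one stratum $\Fl(G,\mu)^{\ad,b}$, and $\ov{\{y\}}\subset\ov{\Fl(G,\mu)^{\ad,b}}$, which by Proposition \ref{prop Newton}(3) has dimension $\dim\,\Fl(G,\mu)^{\ad,b}=\langle\mu-\nu(b),2\rho\rangle$. Combining with the displayed equation forces $\langle\nu(b),2\rho\rangle\le 0$, hence $\nu(b)$ is central, i.e.\ $b=b_0$ is the basic element. Thus $y\in\Fl(G,\mu)^{\ad,b_0}_K$, equivalently $y\in\Fl(G,\mu)^{\Berk,b_0}_K$.

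Finally, since the Newton stratification is compatible with the projection $\tr{pr}_{K|E}$ (the Newton map for $\Fl(G,\mu)_K$ is the composite of the one for $\Fl(G,\mu)$ with $\tr{pr}_{K|E}$, as both are defined by modifying the trivial $G$-bundle on the Fargues–Fontaine curve and this is insensitive to the base field), the point $\theta(x)=\tr{pr}_{K|E}(y)$ lies in $\Fl(G,\mu)^{\Berk,b_0}$ as well. As $x\in\B(G,F)$ was arbitrary, $\B_t(G,F)=\theta(\B(G,F))\subset\Fl(G,\mu)^{\Berk,b_0}$. The main obstacle, and the step deserving the most care, is the identity $\dim\,\ov{\{y\}}=\tr{tr.deg}(\wt{\mathcal{H}(y)}|\wt{K})$: one must justify that specializations of a rank-one adic point in an adic space locally of finite type are controlled by the transcendence degree of the residue field — this is where the dimension theory of locally spectral spaces and the structure of valuation spectra enters, and where I would invoke \cite{ext2022} (or Huber's original results) rather than reprove it. A secondary point requiring attention is checking that $x_K$ can genuinely be arranged to be a \emph{special} vertex so that Proposition \ref{explicit}(1) applies verbatim; if only a non-special vertex or a general point of an apartment is available one uses Proposition \ref{explicit}(2) instead and notes the Gauss-type norm still has the same residue transcendence degree.
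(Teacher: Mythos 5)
Your proposal follows the paper's own strategy almost verbatim (base change so that $x_K$ is a special vertex of $\B(G,K)$ with $G_K$ split, the explicit Gauss-point description of $\theta_{\mu,K,K}(x_K)$ from Proposition \ref{explicit}, the computation $\tr{tr.deg}(\wt{\mathcal{H}(y)}|\wt{K})=\dim\Fl(G,\mu)$, the passage to the adic space and the identity $\dim\ov{\{y\}}=\tr{tr.deg}(\wt{\mathcal{H}(y)}|\wt{K})$ via \cite{ext2022}, and the comparison with the dimension formula of Proposition \ref{prop Newton}(2)). The one step where you deviate is also the one step with a genuine gap: you place $\ov{\{y\}}$ inside $\ov{\Fl(G,\mu)^{\ad,b}}$ and assert that this closure has dimension $\dim\Fl(G,\mu)^{\ad,b}=\langle\mu-\nu(b),2\rho\rangle$ ``by Proposition \ref{prop Newton}(3)''. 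That proposition only gives the closure relation $\ov{\Fl(G,\mu)^{\ad,b}}=\coprod_{b'\geq b}\Fl(G,\mu)^{\ad,b'}$; it does not by itself bound the dimension of the closure. For a locally spectral space written as a union of locally closed pieces, the dimension need not be the maximum of the dimensions of the pieces, because a specialization chain may pass from one stratum into another (already $\Spa$ of a rank-one point sitting over a closed point shows the phenomenon), so a priori $\dim\ov{\Fl(G,\mu)^{\ad,b}}$ could exceed $\dim\Fl(G,\mu)^{\ad,b}$ and even reach $N$, which would destroy the contradiction you want.

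The fix is exactly the observation the paper uses instead, and which is already stated in subsection \ref{subsection Newton and HN}: the Newton map factors through the Berkovich (maximal Hausdorff) quotient $\gamma\colon \Fl(G,\mu)^{\ad}_K\to\Fl(G,\mu)^{\Berk}_K$. Hence the fiber $\gamma^{-1}(y)$ is closed, contains the lift of $y$ and therefore contains $\ov{\{y\}}$, and it lies entirely in the \emph{single} Newton stratum of $y$; so $\ov{\{y\}}\subset\Fl(G,\mu)^{\ad,b}_K$ and the dimension formula of Proposition \ref{prop Newton}(2) gives $N=\dim\ov{\{y\}}\le\langle\mu-\nu(b),2\rho\rangle$, forcing $\nu(b)$ central, i.e.\ $b=b_0$. (Equivalently, your dimension claim for the closure is in fact true, but proving it amounts to showing that specialization chains stay inside single strata, which again uses that the strata are unions of fibers of $\gamma$ — so one should route the argument through that fact directly rather than through Proposition \ref{prop Newton}(3).) Your remaining points — arranging $x_K$ to be special by taking $K$ with value group $\R$, and the compatibility of the Newton stratification with $\tr{pr}_{K|E}$ — are fine and are used (the latter implicitly) in the paper's proof as well.
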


\begin{proof}
 The first step is still applying the base change technique, cf. Proposition \ref{bc1}. For any point $o$ of the building $\B(G,F)$, take a suitable non-archimedean field extension $L|F$ such that $G_L$ is split over $L$ and through the natural map $\B(G,F) \hookrightarrow \B(G,L)$, the point $o$ becomes a special vertex for the later building. In practice, any complete non-archimedean field containing $\bar{F}$ with valuation group being the whole real numbers $\mathbb{R}$ will satisfy this requirement.

Again we apply the explicit formula of Proposition \ref{explicit}. Then the special vertex $o$ will correspond to the standard Gauss point of the Berkovich space \[\Omega(T,P)^{\Berk} \cong \A^{N,\Berk},\] here $N$ is the cardinality of $\Psi$ and it also equals to the dimension of $\Fl(G,\mu)$.
Inside the Berkovich space, the Gauss point (still denote it by $o$) corresponds to the following multiplicative norm: \[L[(X_{\alpha})_{\alpha \in \Psi}]\rightarrow \mathbb{R}_{\geq 0},\quad   \sum_{v \in \mathbb{N}^{\Psi}}a_v X^{v}\mapsto \max_{v} |a_v| \]  Its valuation residue field is $\widetilde{L}(x_{\alpha})$, here $\widetilde{L}$ is the residue field for $L$ and $\alpha$ run over roots in $\Psi$.  It is a field extension of $\widetilde{L}$ with transcendence degree $N$.

Next consider the quotient map $\pi: \Omega(T,P)^{\ad}\rightarrow \Omega(T,P)^{\Berk}$, the Gauss point $o$ has a distinguish lift inside the adic space. We can find this distinguish lift (denoted by $o_1$) by  the (discontinuous) inclusion $\Omega(T,P)^{\Berk}\subset \Omega(T,P)^{\ad}$. Or more concretely,we can  define the Gauss point for the $N$-dimensional disk in a similar way like \cite{sch2012} (which is about one dimensional case),  and view it as a point for the affine $\A^{N,\ad}$ adic space. Then this Gauss point is the desired distinguish lift.

From the discussion above, the valuation residue field of $o_1$ is unchanged, still a field extension of $\widetilde{L}$ with transcendence degree $N$. Then by the dimension formula in Lemma 3.2.2 of \cite{ext2022}, we know that the dimension of the spectral space $\overline{\{o_1\}}$ (closure of $o_1$ inside $\Omega(T,P)^{\ad}$) is \[\dim\overline{\{o_1\}}=N.\]

On the other hand, consider the following commutative diagram:
\[\xymatrix{
\Omega(T,P)^{\ad} \ar[d]^{\pi} \ar@{^{(}->}[r] & \Fl(G, \mu)_{L}^{\ad} \ar[d]^{\gamma} \\
\Omega(T,P)^{\Berk} \ar@{^{(}->}[r] & \Fl(G,\mu)_{L}^{\Berk}
}\] Then $\pi^{-1}(o)=\gamma^{-1}(o)$ and it is closed in $\Fl(G,\mu)_{L}^{\ad}$. Thus the previous closure $\overline{\{o_1\}}$ is indeed the closure inside the whole space $\Fl(G,\mu)_{L}^{\ad}$ and its dimension is $N$.

Since the Newton stratification factors through the Berkovich quotient, for each $b \in B(G,\mu^{-1})$ and each point $y \in \Fl(G,\mu)_{L}^{\Berk, b}$, its fiber $\gamma^{-1}(y)$ is closed and is contained in the same strata $\Fl(G,\mu)_{L}^{\ad, b}$. In particular, we have $o_1\in \Fl(G,\mu)_{L}^{\ad, b} \Leftrightarrow \overline{\{o_1\}} \subset \Fl(G,\mu)_{L}^{\ad, b}$.
Since $\Fl(G,\mu)_{L}^{\ad, b}$ is a locally spectral space,  its dimension is equal to the maximal length of specializing chains. %Then, the dimension of any finite union of Newton strata $\Fl(G,\mu)_{L}^{ad, b}$ is exactly the largest dimension of their member. 
Recall the dimension formula for the Newton strata (see Proposition \ref{prop Newton} (2))), for each $b \in B(G,\mu^{-1})$,
\[\dim\ \Fl(G,\mu)^{\ad,b}=\langle \mu-\nu(b), 2 \rho \rangle.\]  In particular, for any non basic $b$ (equivalent to $\nu(b)$ is not central), we have \[ \dim\ \Fl(G,\mu)^{\ad,b}<N=\langle \mu, 2 \rho\rangle.\] Then the dimension of the union of non basic strata is  strictly smaller than $N$. Therefore $o_1$ lies in the basic stratum. Then $o$ lies in the basic stratum. We are done.

\end{proof}

\begin{rem}
In the de Rham setting, i.e. the setting of \cite{RZ96} Chapter 1 or \cite{DOR10} Part 3, in \cite{hartl2013} Hartl 
%constructed an example showing that $\Fl(G,\mu^{-1}, b_0)^{\adm}\neq \Fl(G,\mu^{-1}, b_0)^{\tr{wadm}}$ (admissible locus vs. weakly admissible locus)
%in general. He further 
made a conjecture (Conjecture 6.4 of \cite{hartl2013}) comparing the difference at level of points:
for any non-archimedean field $L|\breve{E}$ with a finitely generated valuation group, we have \[\Fl(G,\mu^{-1}, b_0)^{\mathrm{wadm}}(L)=\Fl(G,\mu^{-1}, b_0)^{\adm}(L).\]
This conjecture is a generalization of the classical results that $\Fl(G,\mu^{-1}, b_0)^{\mathrm{wadm}}$ and $\Fl(G,\mu^{-1}, b_0)^{\adm}$ have the same classical points (consequence of the theorem of Colmez-Fontaine). 
Hartl proved its analogue  for function fields in \cite{hartl2011}.

In our setting, there is a natural variant\footnote{We will explain how to translate our setting back to the de Rham setting in section 6.} of Hartl's conjecture: for any non-archimedean field $L|E$ with a finitely generated valuation group, we have \[\Fl(G,\mu)^{\Berk, \HN=b_0}(L)=\Fl(G,\mu)^{\Berk, b_0}(L).\]
Our result is compatible with this conjecture. In fact, we can also give another proof for Theorem \ref{main1} by Hartl's conjecture and Theorem \ref{weak}. The idea is in fact similar.

Take a finite extension $L|F$ to split $G$. For any apartment of $\B(G,L)$, apply the proposition \ref{explicit} to  explicit the Berkovich map, then any point $x$ inside the apartment will correspond to a generalized Gauss point $\theta(x)$. Its residue field $K=\mathcal{H}(\theta(x))$ is an affinoid extension of $L$. In particular, the valuation group is still finitely generated. By Theorem \ref{weak}, we have \[\theta(x)\in\Fl(G,\mu)^{\Berk, \HN=b_0}.\] If Hartl's conjecture holds, then $\Fl(G,\mu)^{\Berk, \HN=b_0}(K)=\Fl(G,\mu)^{\Berk, b_0}(K)$, and thus $\theta(x)$ lies in $\Fl(G,\mu)^{\Berk, b_0}$.  

\end{rem}

\section{Comparison of boundaries}
\label{boundary}
In this section, we extend the comparison of buildings and $p$-adic period domains to the boundaries. We keep our notations as before.

\subsection{Functorialities of Newton stratifications}\label{subsection functorial Newton}
 First, we point out that the Newton stratification is functorial.
 
 Let $M$ be an $F$-Levi subgroup of $G$. Suppose over $\bar{F}$, there is a minuscule cocharacter $\mu_{M}$ for $M_{\bar{F}}$ such that its composition with $M_{\bar{F}}\hookrightarrow G_{\bar{F}}$ is the minuscule cocharacter $\mu$. Then $P_{\mu_M}=M \bigcap P_{\mu}$ and this induces an embedding \[\Fl(M,\mu_M)\hookrightarrow \Fl(G,\mu)\] over $E$.
 
 We reformulate the above embedding by Bruhat decomposition. Let $P$ be an $F$-parabolic  subgroup of $G$ with associated Levi subgroup $M$. Choosing a maximal torus $T$ and a Borel subgroup $B$ over $\ov{F}$, and a dominant representative $\mu\in X_\ast(T)_+$ of the conjugacy class $\{\mu\}$.  Let $W$ be the absolute Weyl group of $G$, with the subgroups $W_P$ and $W_{P_\mu}$ corresponding to the parabolic subgroups $P_{\ov{F}}$ and $P_\mu$. Consider the set of minimal length representatives ${}^PW^{P_\mu}\subset W$ for the coset $W_P\setminus W/W_{P_\mu}$. Let $X_\ast(T)_{M,+}$ be set of $M$-dominant cocharacters of $T$, i.e. cocharacters of $T$ dominant for the induced Borel $B\cap M_{\ov{F}}$ of $M_{\ov{F}}$.  Then for each $w\in {}^PW^{P_\mu}$ we get an $M$-dominant cocharacter $\mu^w:=w\mu\in X_\ast(T)_{M,+}$. The $P_{\ov{F}}$-orbits on $\Fl(G,\mu)_{\ov{F}}$ define the Bruhat decomposition
 \[\Fl(G,\mu)_{\ov{F}}=\coprod_{w\in{}^PW^{P_\mu} } P_{\ov{F}}wP_\mu/P_\mu, \]
 and for each $w\in{}^PW^{P_\mu} $, the projection $P\ra M$ induces a morphism of algebraic varieties over $\ov{F}$
 \[ P_{\ov{F}}wP_\mu/P_\mu\lra \Fl(M,\mu^w),\]
 which is a fiberation in affine bundles of rank $\ell(w)$ (for example see \cite{DOR10} Lemma 6.3.6). In particular, the minimal (closed) stratum which corresponds to $w=\tr{id}$ is given by $\Fl(M, \mu_M)$, and we get the closed embedding $\Fl(M,\mu_M)\hookrightarrow \Fl(G,\mu)$ over $\ov{F}$, which descends to $E$.
 
 \begin{lem}\label{lem newton sets}
  Let $M$ be an $F$-Levi subgroup of $G$, and $\mu_M$ a cocharacter of $M$ over $\ov{F}$ with induced cocharacter $\mu$ of $G$ under the inclusion $M\subset G$. Then the induced map $B(M)\ra B(G)$ between Kottwitz sets restricts to a map
  \[B(M,\mu_M)\ra B(G,\mu).\]
 \end{lem}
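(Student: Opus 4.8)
The plan is to unwind the definitions of the Kottwitz sets and check that the two defining conditions --- equality of $\kappa$-invariants and the Newton inequality --- are both preserved under the natural map $i\colon B(M)\to B(G)$ induced by $M\hookrightarrow G$ (equivalently, $\mathcal{E}_{b_M}\mapsto \mathcal{E}_{b_M}\times^M G$ on $G$-bundles). Recall that $i$ is compatible with the two invariant maps: $\kappa_G\circ i$ is the composite of $\kappa_M$ with the push-forward $\pi_1(M)_\Gamma\to\pi_1(G)_\Gamma$, while $\nu_G\circ i$ sends $b_M$ to the (relative) $G$-dominant representative of the Newton point $\nu_M(b_M)\in\N(M)$. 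Granting this, the $\kappa$-condition is immediate: since $\mu_M$ composed with $M\hookrightarrow G$ is $\mu$ (or a $G$-conjugate of it), the image of $\mu_M$ in $\pi_1(G)_\Gamma$ equals $\mu^\sharp$, so $\kappa_G(i(b_M))$ is the image of $\kappa_M(b_M)=\mu_M^\sharp$, which is $\mu^\sharp$.

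For the Newton inequality I would first normalise as in the discussion preceding the lemma: take $\mu$ to be $G$-dominant and $\mu_M=w\mu$ for a minimal-length representative $w\in{}^PW^{P_\mu}$, so that $\mu_M$ is $M$-dominant with $G$-dominant conjugate $\mu$, hence $\mu_M^\diamond$ has $G$-dominant conjugate $\mu^\diamond$. Setting $\nu_1=\nu_M(b_M)$ and $\nu_2=\mu_M^\diamond$ (both elements of $\N(M)$), the statement to prove becomes a purely root-theoretic one: if $\nu_1\leq\nu_2$ in the $M$-order, then $\nu_1^+\leq\nu_2^+$ in the $G$-order, where $(-)^+$ denotes the $G$-dominant representative. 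Indeed, given this, $\nu_G(i(b_M))=\nu_1^+\leq\nu_2^+=\mu^\diamond$, which is exactly the remaining condition for $i(b_M)\in B(G,\mu)$.

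To establish this reduced assertion I would invoke the convexity characterisation of the dominance order: for dominant rational (co)characters one has $\lambda\leq\lambda'$ if and only if $\lambda$ lies in the convex hull of the Weyl orbit of $\lambda'$. Applying it over $M$ to $\nu_1\leq\nu_2$ gives $\nu_1\in\mathrm{conv}(W_M\nu_2)$; since $W_M\nu_2\subseteq W\nu_2=W\nu_2^+$ and convex hulls of $W$-stable sets are $W$-stable, it follows that $\nu_1^+\in\mathrm{conv}(W\nu_2^+)$, and the convexity criterion over $G$ then yields $\nu_1^+\leq\nu_2^+$.

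The step needing the most care is the passage between the relative and absolute settings: the Newton points live in $\N(G)=X_\ast(A)^+_\Q$ (via the quasi-split inner form $H$), so the convexity argument must be run with the relative root datum, or carried out over a splitting field and restricted to $\Gamma$-invariants, checking that forming dominant representatives and the convexity criterion are compatible with the Galois action. Alternatively, one can argue geometrically and sidestep this bookkeeping: by Proposition \ref{prop Newton}(1) applied to $M$ the stratum $\Fl(M,\mu_M)^{b_M}$ is non-empty, and under the closed embedding $\Fl(M,\mu_M)\hookrightarrow\Fl(G,\mu)$ its image lies in the stratum indexed by $i(b_M)$ (the modification of the trivial $G$-bundle attached to such a point being the push-forward of that of the trivial $M$-bundle); hence $\Fl(G,\mu)^{i(b_M)}\neq\emptyset$, so $i(b_M)$ lies in the index set of the Newton stratification --- which, up to the sign convention $\mu\leftrightarrow\mu^{-1}$, is precisely the Kottwitz set.
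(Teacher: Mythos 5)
Your overall strategy is genuinely different from the paper's. The paper disposes of the quasi-split case in a few lines (asserting the compatibilities of $\sharp$, $\diamond$ and of the partial orders) and spends its effort on reducing the general case to the quasi-split one: it passes to adjoint groups, realizes $G_{ad}$ as an inner form of its quasi-split form $H_{ad}$ attached to a basic class $b_G$, interprets the $F$-Levi $M$ through a reduction $b_M$ of $b_G$, and transports the Kottwitz sets through the bijections $B(G,\epsilon,\delta)\cong B(H,\epsilon+\xi,\delta)$ for generalized Kottwitz sets. You instead argue uniformly with the invariants: your $\kappa$-step agrees with the paper's, and your convexity argument ($\nu_1\le_M\nu_2$ with both $M$-dominant gives $\nu_1\in\mathrm{conv}(W_M\nu_2)\subseteq\mathrm{conv}(W\nu_2^+)$, hence $\nu_1^+\le_G\nu_2^+$) is correct and in fact supplies the detail behind the paper's bare assertion that the two orders are compatible: containment of coroot cones alone does not give monotonicity once one projects to dominant representatives. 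The point you defer --- that the orders on $\N(M)$ and $\N(G)$, defined via the respective quasi-split inner forms, may be computed absolutely after restricting to Galois-stable classes --- is exactly where the paper's inner-twisting bookkeeping lives; it can be replaced by the standard comparison of relative and absolute dominance orders on Galois-stable dominant elements (cf. \cite{RR}), applied separately to $M$ and $G$, so your route is viable and arguably more economical, provided that comparison is actually invoked. Your geometric fallback is not circular (the commutative diagram underlying Proposition \ref{natural} does not use the lemma), but it deduces an elementary combinatorial fact from two substantial geometric inputs (non-emptiness of all Newton strata of $\Fl(M,\mu_M)^{\ad}$ and the containment of the image of the Newton map in the Kottwitz set).

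One step fails as stated: with $\mu$ $G$-dominant and $\mu_M=w\mu$, $w\in{}^PW^{P_\mu}$, it is not true in general that $\mu_M^\diamond$ has $G$-dominant representative $\mu^\diamond$. For instance, let $G=U(3)$ be the quasi-split unitary group attached to an unramified quadratic extension, let $M=Z_G(A)$ be the minimal $F$-Levi (a torus), and identify $X_\ast(T_{\ov F})=\Z^3$ so that the nontrivial Galois element acts by $(a_1,a_2,a_3)\mapsto(-a_3,-a_2,-a_1)$; for $\mu=(1,0,0)$ and $\mu_M=s_1\mu=(0,1,0)$ one finds $\mu_M^\diamond=0$, while $\mu^\diamond=(\tfrac12,0,-\tfrac12)$. (The phrase ``$\mu_M^\diamond\mapsto\mu^\diamond$'' in the paper's quasi-split step has the same defect.) What is true, and is all you need, is the inequality: the image of $\mu_M^\diamond$ in $\N(G)$ satisfies $(\mu_M^\diamond)^+\le\mu^\diamond$. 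This follows from the same circle of ideas: averaging over a common open subgroup of $\Gamma$, each term in the average defining $\mu_M^\diamond$ is $W$-conjugate to the corresponding term defining $\mu^\diamond$, and the dominant representative is subadditive for the dominance order. Inserting this in place of the claimed equality, transitivity gives $\nu_G(i(b_M))=\nu_1^+\le\nu_2^+\le\mu^\diamond$, and your argument then establishes the lemma.
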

 \begin{proof}
 If $G$ is quasi-split over $F$, then it is clear that we get an induced map $B(M,\mu_M)\ra B(G,\mu)$. Indeed, this follows from the definitions of the sets $B(M,\mu_M)$ and $B(G,\mu)$: under the induced maps $\pi_1(M)_\Gamma\ra \pi_1(G)_\Gamma$ and $\N(M)\ra \N(G)$, we have $\mu_M^\sharp\mapsto \mu^\sharp$ and $\mu_M^\diamond\mapsto \mu^\diamond$. Moreover, the partial orders on $\N(M)$ and $\N(G)$ are compatible, since both can be described in terms of relative coroots under the quasi-split assumption.
 
 Next, we reduce the general case to the quasi-split case. Assume that $G$ is non quasi-split, and let $H$ be its quasi-split inner form over $F$. Then $G$ corresponds to an element  \[\xi\in H^1(F, H_{ad})=\pi_1(H_{ad})_\Gamma=[\lan \Phi\ran ^\vee/\lan \Phi^\vee\ran ]_\Gamma.\] 
 Moreover, under the bijection \[H^1(F, H_{ad})\simeq B(H_{ad})_{basic},\] we get an induced $[b_G]\in B(H_{ad})_{basic}$ such that $G_{ad}=H_{ad, b_G}$. Now, the $F$-parabolic subgroups of $G_{ad}$ corresponds to the $F$-parabolic subgroups of $H_{ad}$ which admit a reduction of $b_G$, see \cite{cfs2021} Definition 2.5 and section 7. Let $M_{ad}$ be the induced Levi of $G_{ad}$ and $M^H_{ad}$ the corresponding Levi of $H_{ad}$. The isomorphism class of $M_{ad}$ defines an element \[\xi_M\in H^1(F, M^H_{ad}),\] which is given by the class of the reduction $b_M$ of $b_G$ to $M^H_{ad}$. Thus under the map $H^1(F, M^H_{ad})\ra H^1(F, H_{ad})$ we have $\xi_M\mapsto \xi$.  The inclusion $M^H_{ad}\subset H_{ad}$ induces a map
 \[B(M^H_{ad}, \mu^\sharp+_M\xi_M,\mu^\diamond_M)\ra B(H_{ad},\mu^\sharp+\xi,\mu^\diamond).\]
 On the other hand, we have natural bijections
 \[B(M,\mu_M)=B(M_{ad},\mu_M)=B(M^H_{ad}, \mu^\sharp_M+\xi_M,\mu^\diamond_M)\] and similarly
 \[B(G,\mu)=B(G_{ad},\mu)= B(H_{ad},\mu^\sharp+\xi,\mu^\diamond).\]
 Here we use the fact that the inner twisting induces bijections (cf. \cite{cfs2021} subsection 4.2)
 \[B(G)\cong B(H),\quad B(G,\epsilon, \delta)\cong B(H, \epsilon+\xi, \delta). \]
 Putting together, we get the induced map $B(M,\mu_M)\ra B(G,\mu)$.
 \end{proof}
 In the following, we will use the version of induced map $B(M,\mu_M^{-1})\ra B(G,\mu^{-1})$ by Lemma \ref{lem newton sets}.
 We have the following proposition:
 \begin{prop}
 	\label{natural}
 	The map $\Fl(M,\mu_M)^{\ad}\hookrightarrow \Fl(G,\mu)^{\ad}$ is compatible with Newton stratifications: if $b_M\mapsto b$ under the map $B(M,\mu_M^{-1})\ra B(G,\mu^{-1})$ by Lemma \ref{lem newton sets}, then the stratum $\Fl(M,\mu_M)^{\ad, b_M}$ is mapped into the stratum $\Fl(G,\mu)^{\ad, b}$ under the morphism $\Fl(M,\mu_M)^{\ad}\hookrightarrow \Fl(G,\mu)^{\ad}$. 
 \end{prop}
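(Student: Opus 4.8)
The plan is to reduce everything to the point-wise description of the Newton map in terms of modifications of $G$-bundles on the Fargues-Fontaine curve. Concretely, take a geometric point $x\in \Fl(M,\mu_M)^{\ad}(C,C^+)$ with $C|E$ an algebraically closed perfectoid field, and let $x'\in\Fl(G,\mu)^{\ad}(C,C^+)$ be its image under the closed embedding $\Fl(M,\mu_M)^{\ad}\hookrightarrow\Fl(G,\mu)^{\ad}$. We must show $b(\mathcal{E}_{1,x'})=b$ in $B(G)$, where $b_M=b(\mathcal{E}_{1,x})\in B(M)$ and $b_M\mapsto b$ under $B(M,\mu_M^{-1})\to B(G,\mu^{-1})$. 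The key observation is that the Bialynicki-Birula map $\pi^{BB}_{G,\mu}$ and the Beauville--Laszlo modification $BL_1$ are compatible with the inclusion $M\subset G$: the affine Grassmannian $\Gr^{B^+_{\dR}}_{M}$ maps to $\Gr^{B^+_{\dR}}_{G}$ in a way that intertwines $\pi^{BB}_{M,\mu_M}$ with $\pi^{BB}_{G,\mu}$ (via the closed embedding $\Fl(M,\mu_M)\hookrightarrow\Fl(G,\mu)$ as the $w=\mathrm{id}$ Bruhat stratum described above) and intertwines $BL_1^M$ with $BL_1^G$ via pushforward of bundles along $M\hookrightarrow G$. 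Hence $\mathcal{E}_{1,x'}$ is precisely the $G$-bundle obtained from the $M$-bundle $\mathcal{E}_{1,x}$ by extension of structure group $M\to G$.

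Once this compatibility is in place, the statement becomes purely group-theoretic: the extension of structure group along $M\hookrightarrow G$ sends the $\sigma$-conjugacy class $b_M\in B(M)$ to its image under the canonical map $B(M)\to B(G)$, which is exactly the map featuring in Lemma \ref{lem newton sets}. This is essentially the definition of the functoriality of $B(-)$ on Kottwitz sets, combined with Fargues's identification $|\Bun_{G}|\cong B(G)$ being compatible with $M\hookrightarrow G$. Therefore $b(\mathcal{E}_{1,x'})$ equals the image of $b_M=b(\mathcal{E}_{1,x})$ in $B(G)$, i.e. equals $b$. Since the Newton stratification is defined fiberwise by $x\mapsto b(\mathcal{E}_{1,x})$ and these classes match on all geometric points, the scheme-theoretic preimage of $\Fl(G,\mu)^{\ad,b}$ under $\Fl(M,\mu_M)^{\ad}\hookrightarrow\Fl(G,\mu)^{\ad}$ contains $\Fl(M,\mu_M)^{\ad,b_M}$, which is the assertion.

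The main obstacle I expect is making the compatibility of $\pi^{BB}$ and $BL_1$ with the embedding $M\hookrightarrow G$ precise at the level of $v$-sheaves, rather than just on geometric points. One needs that the closed embedding $\Fl(M,\mu_M)\hookrightarrow\Fl(G,\mu)$ lifts to a closed immersion of affine Schubert cells $\Gr^{B^+_{\dR}}_{M,\mu_M}\hookrightarrow\Gr^{B^+_{\dR}}_{G,\mu}$ over $\mathrm{Spd}\,E$ compatible with the two Bialynicki--Birula isomorphisms; this follows from the functoriality of the Cartan decomposition and of the $B^+_{\dR}$-affine Grassmannian in the reductive group, using that $\mu_M$ is a representative of $\{\mu\}$ lying in $X_\ast(T)_{M,+}$, but it requires a careful check that the relevant diagram commutes. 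Alternatively, since we only need the statement on topological spaces, one can bypass $v$-sheaves entirely and argue directly with geometric points $(C,C^+)$ as above: by Proposition \ref{prop Newton}(1) and continuity of the Newton maps, equality of the point-wise recipes on all such points forces the set-theoretic inclusion of strata. I would take this latter route, as it sidesteps the delicate functoriality of the full diamond-theoretic construction and reduces the proof to the already-cited classification of $G$-bundles and the functoriality of extension of structure group, which is standard.
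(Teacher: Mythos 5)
Your proposal is correct and is essentially the paper's own argument: the paper proves the proposition by exhibiting exactly the commutative diagram you describe, relating the inverse Bialynicki--Birula maps, the Beauville--Laszlo maps $BL_{1,M}$ and $BL_{1,G}$, and the pushforward $\Bun_M\ra\Bun_G$ induced by $M\subset G$, from which the compatibility of Newton strata follows since the Newton map is the composite of these arrows. Your pointwise reformulation on $(C,C^+)$-points is just an unwinding of the same diagram and buys nothing essentially new.
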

 
 \begin{proof}
 	The observation is that the Newton stratification is defined by composition of natural maps. More precisely, we have the following commutative diagram:
 	\[\xymatrix{
 		\Fl(M,\mu_{M})^{\ad,\Diamond} \ar[d] \ar[rr]^{\pi_{M,\mu_M}^{BB,-1}}& & \Gr_{M,\mu_M}^{B_{\dR}^{+}} \ar[r]^{BL_{1,M}} \ar[d] & \Bun_{M}  \ar[d] \\
 		\Fl(G,\mu)^{\ad,\Diamond}  \ar[rr]^{\pi_{G,\mu}^{BB, -1}} && \Gr_{G,\mu}^{B_{\dR}^{+}} \ar[r]^{BL_{1,G}} & \Bun_{G}.
 	}\]
 	Now this proposition is obvious.
 \end{proof}
 
% \textcolor{orange}{I feel strange, this proposition is natural, but the map between Kottwitz set is not obvious if G non-quasi-split...}

 \subsection{Strongly regular elements in the Kottwitz set}

To compare boundaries of the Berkovich compactification with non basic Newton strata, we first introduce a notion of \emph{strongly regular elements}  in $B(G,\mu^{-1})$.

%For an $F$-parabolic subgroup $P$  of $G$ together with a Levi decomposition $P= N \rtimes M$, after base change to $\bar{F}$, choose a suitable borel subgroup $B_M$ for $M_{\bar{F}}$ such that the subgroup $N_{\bar{F}} \rtimes B_M$ of $P_{\bar{F}}$ is a Borel subgroup for $G_{\bar{F}}$. Take a maximal torus $T$ of $B_M$, there is a unique dominant cocharacter inside $\{\mu\}$, we still donate this distinguish representative as $\mu$. It factors through $M_{\bar{F}}$, and use $\mu_M$ to denote the minuscule cocharacter for $M_{\bar{F}}$. Although $B_M$ is not unique, they are conjugated by $M(\bar{F})$ action and thus the $M(\bar{F})$-conjugacy class of $\{\mu_M\}$ is well defined and the map between flag varieties \[\Par_{\mu_M}(M_{\bar{F}})\cong M_{\bar{F}}/(M_{\bar{F}} \cap P_{\mu})\hookrightarrow G_{\bar{F}}/P_{\mu} \cong \Par_{\mu}(G_{\bar{F}})\] is also independent of choices of $B_M$. 
Let $P, M$ and $\mu_M$ be as in the last subsection. We have an embedding $\Fl(M,\mu_M)\hookrightarrow \Fl(G,\mu)$ over $E$.
By Lemma \ref{lem newton sets}, the group map $M\hookrightarrow G$ induces a map between Kottwitz sets $B(M,\mu_M^{-1})\rightarrow B(G,\mu^{-1}).$ %(\textcolor{orange}{Or maybe we need to apply functoriality of Newton strata in non quasi-split case?}). 
\begin{definition}\label{def SR elements}
The basic element $b_M$ for $B(M,\mu_M^{-1})$ is mapped to an element $b$ of $B(G,\mu^{-1})$. We call this element $b$ \emph{strongly regular}. Letting $M$ vary, the subset of strongly regular elements is denoted by $\mathrm{SR}(G,\mu^{-1})$. 
\end{definition}
In practice to compute this subset, it is sufficient to consider standard $F$-Levi subgroups. 
%If $G$ is quasi-split, the computation is more direct. We can take a Borel subgroup over $F$ first, then we only need to consider standard Levi subgroups with $\mu$ being the fixed dominant cocharacter. The pick up of such representative is important, because there are several $M(\bar{F})$ conjugacy classes mapping to a common conjugacy class $\{\mu\}$ for $G(\bar{F})$. See section 4.3 of \cite{cfs2021} for this remark.
Now we explain the name \emph{strongly regular}.
For such a $b$, let $M_b$ denote the centralizer of $\nu(b)$ in $G$. Then $M\subset M_b$ (and implicitly we also determine the compatible $F$ parabolic subgroup for $M_b$). The cocharacter $\mu_M$ further induces a cocharacter $\mu_1$ for $M_b$. Combining together, we have the following maps: \[B(M,\mu_{M}^{-1})\rightarrow B(M_b,\mu_1^{-1})\rightarrow B(G,\mu^{-1})\] Then it is easy to see $b_M$ is mapped to the basic element $b_{M_b}$ for $B(M_b,\mu_1^{-1})$. In particular, $b$ is a $G$-\emph{regular} element coming from $B(M_b)_{basic}$. See section 6 of \cite{kott1985}. Our requirement is much stronger, since we take into account of cocharacters $\mu^{-1}$ and $\mu_M^{-1}$. Indeed, this additional requirement has the following consequence.

We first make a comparison of strongly regular elements with \emph{Hodge-Newton decomposable} elements. Recall (cf. \cite{eva2024} Definition 7.1) for $b\in B(G)$ and $\delta\in X_\ast(A)^+_\Q=\N(G)$ with $\nu(b)\leq \delta$, we say $(b, \delta)$ is Hodge-Newton decomposable if there exists a proper standard Levi subgroup $M$ of the quasi-split inner form $H$ of $G$ which contains the centralizer of $\nu(b)$, such that \[\delta-\nu(b)\in \lan \Phi_{0,M}^\vee\ran_\Q.\] For $b\in B(G,\mu^{-1})$, it is called Hodge-Newton decomposable if $(b, \mu^{-1,\diamond})$ is Hodge-Newton decomposable in the above sense. If $G=H$ quasi-split,
Lemma 4.11 in \cite{cfs2021} gives some equivalent descriptions about Hodge-Newton decomposable elements $b$ inside $B(G,\mu^{-1})$. From the above discussion, a strongly regular element $b$ will come from the basic element of $B(M_b,\mu_1^{-1})$. Thus if $G$ quasi-split, it satisfies the condition of Lemma 4.11 (2) of \cite{cfs2021}, so it is a Hodge-Newton decomposable element. In general, we have
\begin{lem}\label{lem SR non basic}
If $b\in \mathrm{SR}(G,\mu^{-1})$ is the image of  $B(M,\mu_{M}^{-1})_{basic}\ra B(G,\mu^{-1})$ for a proper $F$-Levi subgroup $M$, then it is non basic.
\end{lem}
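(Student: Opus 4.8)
The plan is to show that if $b \in \mathrm{SR}(G,\mu^{-1})$ arises as the image of the basic element $b_M \in B(M,\mu_M^{-1})_{basic}$ for a proper $F$-Levi subgroup $M \subsetneq G$, then $\nu(b)$ does not factor through the center of $G$. First I would reduce to the adjoint group: since $B(G,\mu^{-1}) \cong B(G_{ad},\mu^{-1})$ compatibly with the maps from Levi subgroups (this is the key property recalled after the definition of $B(G,\epsilon,\delta)$, and $M$ proper in $G$ corresponds to $M_{ad}$ proper in $G_{ad}$), it suffices to treat the case where $G$ is semisimple adjoint; moreover, via the inner twisting $B(G) \cong B(H)$ one may further assume $G = H$ is quasi-split, so that $\N(G) = X_\ast(A)^+_\Q$ and the partial order is given by $\Q_{\geq 0}\Phi_0^+$, and $M$ is (conjugate to) a standard Levi.

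Next I would compute $\nu(b)$ explicitly. Since $b$ is the image of the basic element $b_M$ of $B(M,\mu_M^{-1})$, its Newton point is the common $G$-dominant representative of $\nu_M(b_M) \in \N(M)$; and because $b_M$ is basic in $B(M,\mu_M^{-1})$, the point $\nu_M(b_M)$ is central in $M$ and (by the defining conditions of the Kottwitz set) equals $\mu_M^{-1,\diamond}$ computed in $\N(M)$ — equivalently, it is the average over $W_M$ (relative Weyl group of $M$) of the Galois orbit of $\mu_M^{-1}$, which is also the image of $\mu^{-1,\diamond}_G$ under the appropriate projection but regarded now only as $M$-central rather than $G$-central. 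The crucial point is that $\nu(b)$ is \emph{central in $M$} by construction, so its centralizer in $G$ contains $M$; but if in addition $\nu(b)$ were central in $G$, then $b$ would be basic, forcing (by injectivity of $(\nu,\kappa)$ and $B(G)_{basic} \cong \pi_1(G)_\Gamma$) that $b_0 = b$, where $b_0$ is the unique basic element of $B(G,\mu^{-1})$. So the whole statement reduces to: $\nu(b) \neq \nu(b_0)$, i.e. the $M$-central cocharacter obtained from $\mu_M$ is not already $G$-central.

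To finish I would argue that $\nu(b_0) = \mu^{-1,\diamond}$ (Galois-averaged $G$-dominant cocharacter) while $\nu(b)$ is a strictly smaller element in the partial order whenever $M \subsetneq G$. Concretely, $\nu(b)$ is obtained from $\mu^{-1}$ by averaging over the relative Weyl group $W_M$ of $M$ (plus Galois average), whereas $\mu^{-1,\diamond}$ is obtained by the full average that makes the result $G$-central; since $M$ is proper, there is a simple relative root $\alpha \in \Delta_0$ not lying in $M$, and $\langle \mu^{-1,\diamond} - \nu(b), \varpi_\alpha^\vee \rangle$-type considerations show $\mu^{-1,\diamond} - \nu(b) \in \Q_{\geq 0}\Phi_0^+$ is nonzero — equivalently, $\nu(b)$ pairs nontrivially with some fundamental coweight outside $M$ in a way $\mu^{-1,\diamond}$ does not, so it cannot be central in $G$ (a $G$-central element of $\N(G)$ pairs to zero with every coroot, in particular lies in a strictly smaller face of the chamber). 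I expect the main obstacle to be bookkeeping the normalization conventions carefully — matching the $\mu^{-1}$ versus $\mu$ sign, the Galois averaging, and the passage between absolute and relative root data through the inner twisting — so that the inequality $\nu(b) < \nu(b_0)$, hence $b \neq b_0$, hence $b$ non-basic, comes out cleanly; the geometric input (nonemptiness, dimension formula) is not needed here, only the combinatorics of the Kottwitz set and the fact that basicness is detected by centrality of the Newton point.
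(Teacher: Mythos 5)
Your setup is fine as far as it goes (reduce to the adjoint group, note that basicness is detected by centrality of the Newton point, and that $b$ basic would force $b=b_0$ since $\kappa(b)=\mu^{-1,\sharp}$), but several of your intermediate claims are wrong in a way that exposes the real gap. The Newton point of the basic class of a Kottwitz set is the \emph{central projection} of the Galois average (the average over $W_M$ and $\Gamma$, as you say in your "equivalently"), not the Galois average itself: $\nu_M(b_M)\neq\mu_M^{-1,\diamond}$ and $\nu(b_0)\neq\mu^{-1,\diamond}$ in general; after your reduction, $G$ is adjoint and $\mu$ nontrivial, so $\nu(b_0)=0$ while $\mu^{-1,\diamond}\neq 0$. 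Consequently the inequality you aim for is backwards ($b_0$ is the \emph{minimal} element of $B(G,\mu^{-1})$, so non-basicness of $b$ means $\nu(b_0)<\nu(b)$), and proving that $\mu^{-1,\diamond}-\nu(b)$ is a nonzero non-negative combination of positive coroots proves nothing, since $\mu^{-1,\diamond}-\nu(b_0)$ has the same property whenever $\mu^{-1,\diamond}$ is not central.

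The genuine gap is the last step: the assertion that "$\nu(b)$ pairs nontrivially with some fundamental (co)weight outside $M$, hence is not central" is exactly the statement to be proved, and you give no argument for it. Since $\nu_M(b_M)$ differs from $\mu_M^{-1,\diamond}$ by a non-negative combination of coroots of $M$, its pairing with a fundamental weight $\varpi_\alpha$ (weights, not coweights) for $\alpha\notin\Delta_M$ equals $\langle\varpi_\alpha,\mu_M^{-1,\diamond}\rangle$, and nothing in your sketch rules out that all of these vanish; ruling that out, i.e. excluding $\nu(b)=0$, is precisely the content of the lemma. The paper supplies the missing combinatorial input: decompose the adjoint group into its quasi-simple factors (your reduction stops at "semisimple adjoint"), so that the relevant relative Dynkin diagram is connected; assume $\nu(b)=0$; then $\mu_M^{-1,\diamond}$ is a non-negative combination of positive coroots of $M$, and pairing the dominant element $\mu^{-1,\diamond}$ with a root $\widehat{\beta}$ whose coroot lies outside $M$, chosen by connectedness so that it meets the support of that combination with a strictly negative Cartan pairing, gives $\langle\mu^{-1,\diamond},\widehat{\beta}\rangle<0$, contradicting dominance. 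Connectedness and the nontriviality of $\mu$ on the factor in question are essential (for a product group with $\mu$ trivial on the factor where $M$ is proper, the image of the basic element is basic), and neither enters your argument. A secondary point: the reduction "to the quasi-split inner form" is both unnecessary and slightly off, since under the inner twisting $B(G,\mu^{-1})$ is identified with a generalized Kottwitz set $B(H_{ad},\mu^{-1,\sharp}+\xi,\mu^{-1,\diamond})$ rather than with $B(H,\lambda^{-1})$ for a cocharacter $\lambda$; the paper's proof avoids this by working directly with $G_{ad}$ and its relative root system.
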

\begin{proof}
We pass to the adjoint quotient $G_{ad}$. This influences nothing.
Now $G$ is an adjoint group over $F$, then $G$ indeed has a product decomposition \[\prod_{i} G_i = G,\] where $G_i$ is the quasi-simple component of $G$. Then the cocharacter $\mu$ is also a product $\mu=\prod_{i} \mu_i$, where $\mu_i$ is a minuscule cocharacter for $G_i$. Then \[B(G,\mu^{-1})= \prod_{i} B(G_i,\mu_i^{-1}).\]
Therefore we only need to verify our claim in the quasi-simple case. 

Take a maximal split torus $S$  of $G$, consider the relative root system. Then the resulting Dynkin diagram is connected. For any parabolic subgroup $P$ together with its Levi $M$, performing the process of defining strongly regular element to get $\mu_M$,  if the basic element $b_M \in B(M,\mu_M^{-1})$ corresponds to the basic element of $B(G,\mu^{-1})$, then from the definition of  $B(M,\mu_M^{-1})$ we get 
\[\mu_M^{-1,\diamond}-\nu_M(b_M)=\sum c_i \alpha_i,\] where $c_i >0$ and $\alpha_i$ is a positive coroot of $M$. Passing to $B(G,\mu^{-1})$, if $b_M$ is mapped to the basic element, then $\nu(b_M)=0$ (since the center of $G$ is trivial), therefore (by our choice of $\mu_M$) \[\mu^{-1,\diamond}=\mu_M^{-1,\diamond}-\nu(b_M)=\sum_{i} c_i\alpha_i\] lies in the positive chamber. This is nonzero because the type determined by $\{\mu\}$ is non-trivial. On the other hand, because $M$ is proper, it can not contain  all positive coroots of $G$. Suppose it does not contain the positive coroot $\beta$ of $G$ and let $\widehat{\beta}$ denote the corresponding positive root. Then for each $\alpha_i$, the product $\langle \alpha_i, \widehat{\beta} \rangle$ is non-positive. Because the Dynkin diagram is connected, there exists at least one $i$ with such negative product. Then \[\langle \mu^{-1,\diamond}, \widehat{\beta} \rangle < 0,\] which is impossible.
\end{proof}

On the other hand, if $G$ quasi-split over $F$,
the requirement of being strongly regular is much stronger than being Hodge-Newton decomposable. 
\begin{example}
For the group $G=\GL_n$, we can view $B(G)$ as a subset of $\N(G)$ (since the Newton map is injective), thus describe an element $b$ through its Newton polygon $\nu(b)$. For non-basic $b \in B(G,\mu^{-1})$, the HN decomposable condition requires that $\nu(b)$ has a turning point lying in the Hodge polygon $\mu^{-1,\diamond}$. The strongly regular condition requires that \textbf{each} turning point of $\nu(b)$ also lies in the Hodge polygon of $\mu^{-1,\diamond}$. Obviously this requirement is much stronger and thus $\mathrm{SR}(G,\mu^{-1})$ contains very few elements. 
\begin{itemize}
	\item 
If $\mu=(1,...1,0..,0)$ with $d$ terms 1, then $\mathrm{SR}(G,\mu^{-1})$ contains only $d(n-d)+1$ elements. 
\item If $d=1$ or $d=n-1$ or $n \leq 4$, then $B(G,\mu^{-1})$ is fully Hodge-Newton decomposable and each element is also strongly regular. 
\item When $n=5$ and $d=2$, there is a single non-Hodge-Newton decomposable element. Other elements are also strongly regular. 
\item When $n \geq 6$, there will be many elements being Hodge-Newton decomposable but not strongly regular. To illustrate this more explicitly, we draw a picture showing an example with $n=7$ and $d=4$.  See the following Figure \ref{pic}.
\end{itemize}
\end{example}

\begin{figure}[h]

 \centering
\begin{tikzpicture}

\begin{axis}[ xlabel=x, 
ylabel=y, 
tick align=outside, 
 legend style={at={(0.5,-0.2)},anchor=north}%
 ]

 \addplot+ [sharp plot,mark=*,blue] plot coordinates {
 (0,0)  (1,1)   (3,3)   (6,3)   (7,3)
 };
  \addlegendentry{$\mu^{-1,\diamond}$}

 \addplot+ [sharp plot,mark=*,pink] plot coordinates {
 (0,0) (1,1) (3,2) (6,3) (7,3)
 };
 \addlegendentry{$\nu(b)$}

\end{axis}                         
\end{tikzpicture}                  
\caption{Example}
\label{pic}
\end{figure}

\subsection{Boundaries of Berkovich compactifications and non basic Newton strata}

Now we turn to the Berkovich compactification. The conjugacy class of the geometric minuscule cocharacter $\{\mu\}$ determines an $F$-type denoted as $t$. In Theorem \ref{thm BT vs p-adic period} we proved that \[\B_t(G,F)\subset \Fl(G,\mu)^{\Berk, b_0}.\] By Proposition \ref{levistrata}, the boundary of $\B_t(G,F)$ in
the Berkovich compactification $\ov{\B_{t}(G,F)}$ can be described by the Bruhat-Tits buildings of $\tau$-relevant  proper  Levi subgroups $M$, where $\tau$ is the $F$-rational type uniquely determined by $t$.
We can then write 
\[ \overline{\B_{t}(G,F)}=\bigcup_{M\,\tr{standard}}\bigcup_{g\in G(F)}g\Big(\B_{\tau}(M,F)\Big).\]
 On the other hand, the boundary of  $\Fl(G,\mu)^{\Berk, b_0}$ can be described by non basic Newton strata by Proposition \ref{prop Newton}. It turns out that the boundaries of  $\B_{t}(G,F)$ and $\Fl(G,\mu)^{\Berk, b_0}$ are matched neatly in the following way:
\begin{thm}\label{thm boundaries}
For each proper standard $F$-rational Levi group $M$ of $G$,
 the contribution of the boundary $\bigcup _{g\in G(F)} g\Big(\B_{\tau}(M,F)\Big)$ is contained in the Newton stratum corresponding an element $b_M \in \mathrm{SR}(G,\mu^{-1})$. This $b_M$ is determined by $M$, and each elements in $\mathrm{SR}(G,\mu^{-1})$ will appear in this way.  

\label{main2}
\end{thm}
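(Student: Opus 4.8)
The plan is to combine Theorem~\ref{thm BT vs p-adic period} (applied to the Levi $M$ in place of $G$) with the functoriality of the Newton stratification established in Proposition~\ref{natural}, together with the description of the boundary strata of the compactification from the proof of Proposition~\ref{levistrata}. First I would fix a proper standard $F$-rational $\tau$-relevant Levi subgroup $M$ with an associated $F$-parabolic $Q$, and recall from the proof of Proposition~\ref{levistrata} that the boundary contribution $\B_\tau(M,F)$ is, by construction, the image of the composite
\[
\B(M,F)\xrightarrow{\theta_{t_1,F}} \Par_{t_1}(M_L)^\Berk \hookrightarrow \Par_{t_1}(G_L)^\Berk=\Fl(G,\mu)^\Berk_L,
\]
and that this descends to $\Fl(G,\mu)^\Berk$. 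The key point is to identify $\Par_{t_1}(M_L)^\Berk$ with $\Fl(M,\mu_M)^\Berk$ for a suitable minuscule cocharacter $\mu_M$ of $M$ whose $G$-conjugacy class is $\{\mu\}$: this is exactly the cocharacter appearing in subsection~\ref{subsection functorial Newton}, coming from the minimal (closed) Bruhat cell $\Fl(M,\mu_M)\hookrightarrow\Fl(G,\mu)$. Here one must check that the $t_1$-relevance (equivalently $\tau$-relevance) of $Q$ guarantees that the relevant osculatory component of $\Par(M)$ is exactly the flag variety $\Fl(M,\mu_M)$; this is the content of the osculatory-variety discussion in subsection~2.2 and is where I would be most careful.

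Having made this identification, the boundary stratum $\B_\tau(M,F)$ becomes precisely the image $\theta_M(\B(M,F))=\B_{t_M}(M,F)\subset \Fl(M,\mu_M)^\Berk$ under the Berkovich map for $M$, pushed forward along the closed embedding $\Fl(M,\mu_M)^\Berk\hookrightarrow\Fl(G,\mu)^\Berk$. Now I would apply Theorem~\ref{thm BT vs p-adic period} \emph{to the group $M$ and cocharacter $\mu_M$}: it gives $\B_{t_M}(M,F)\subset \Fl(M,\mu_M)^{\Berk,b_{M,0}}$, where $b_{M,0}\in B(M,\mu_M^{-1})$ is the unique basic element. Then Proposition~\ref{natural} (the compatibility of Newton stratifications with $\Fl(M,\mu_M)^\ad\hookrightarrow\Fl(G,\mu)^\ad$, which factors through the Berkovich quotient) shows that the image of $\Fl(M,\mu_M)^{\Berk,b_{M,0}}$ lands in $\Fl(G,\mu)^{\Berk,b_M}$, where $b_M\in B(G,\mu^{-1})$ is the image of $b_{M,0}$ under the map $B(M,\mu_M^{-1})\to B(G,\mu^{-1})$ of Lemma~\ref{lem newton sets}. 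By Definition~\ref{def SR elements} this $b_M$ is exactly a strongly regular element, and since $M$ is proper, Lemma~\ref{lem SR non basic} shows $b_M$ is non basic. This gives the inclusion $\bigcup_{g\in G(F)}g(\B_\tau(M,F))\subset\Fl(G,\mu)^{\Berk,b_M}$, using that the Newton stratification is $G(F)$-invariant.

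For the last assertion — that $b_M$ depends only on $M$ and that every element of $\mathrm{SR}(G,\mu^{-1})$ arises this way — I would argue as follows. That $b_M$ depends only on the conjugacy class of $M$ (not on the choice of $Q\supset M$ or of the reduction) follows from the remark after Proposition~\ref{levistrata}: the locus $\bigcup_g g(\B_\tau(M,F))$ depends only on the conjugacy class of $M$, and on the Kottwitz-set side $b_{M,0}$ and its image $b_M$ are manifestly determined by $(M,\{\mu_M\})$, which is in turn determined by $M$ and $\{\mu\}$ via the Bruhat-cell construction. Conversely, by Definition~\ref{def SR elements} every strongly regular element is by definition the image of $b_{M,0}$ for some standard $F$-Levi $M$ admitting a minuscule $\mu_M$ inducing $\mu$; one just needs to observe that such an $M$ can be taken $\tau$-relevant (replace $M$ by the Levi of the unique maximal parabolic representing the same osculatory variety), so that $M$ actually contributes a boundary stratum of $\B_t(G,F)$ in the stratification of Proposition~\ref{levistrata}. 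The main obstacle I anticipate is the bookkeeping in the first paragraph: matching the ``type'' language of $\Par_{t_1}(M_L)$ and osculatory varieties with the ``cocharacter'' language of $\Fl(M,\mu_M)$, and in particular verifying that $\tau$-relevant proper Levis correspond precisely to the proper Levis $M$ for which $\mu_M$ is well-defined and nontrivial — everything else is a formal consequence of Theorem~\ref{thm BT vs p-adic period}, Proposition~\ref{natural}, and Lemmas~\ref{lem newton sets} and \ref{lem SR non basic}.
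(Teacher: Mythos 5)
Your proposal is correct and follows essentially the same route as the paper: reduce by $G(F)$-invariance to $\B_\tau(M,F)$, realize it via the proof of Proposition~\ref{levistrata} as the image of $\theta$ for $M$ inside $\Fl(M,\mu_M)^\Berk\hookrightarrow\Fl(G,\mu)^\Berk$, apply Theorem~\ref{thm BT vs p-adic period} to $(M,\mu_M)$, and transfer through Proposition~\ref{natural} and Lemma~\ref{lem newton sets} to land in the stratum of the strongly regular element $b_M$. The bookkeeping you flag is handled in the paper exactly as you anticipate, by choosing a Borel $B_M$ of $M_L$ with $N_L\rtimes B_M$ a Borel of $G_L$ and the corresponding dominant representative of $\{\mu\}$, so that $P_\mu$ is osculatory with $P_L$ and $\Par_{t_1}(M_L)\hookrightarrow\Par_{t_1}(G_L)$ is identified with $\Fl(M_L,\mu_M)\hookrightarrow\Fl(G_L,\mu)$ (no relevance hypothesis is needed for this identification).
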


\begin{proof}

Since the Newton stratification is invariant under the $G(F)$-action, for each conjugacy class of Levi subgroup, it is sufficient to prove the assertion  for $\B_{\tau}(M,F)$.

 Recall the proof of the Proposition \ref{levistrata}. To compute the contribution from $M$, we pick up an $F$-parabolic subgroup $P$ containing $M$ with Levi decomposition $P= N \rtimes M$. Take a finite extension $L|E$ to split $G$. Then $\{\mu\}$ determines an $L$-rational type $t_1$. We use $\Fl(G,\mu)_L$ to compute the Berkovich compactification $\overline{\B_{t}(G,F)}$. Choose a $L$-Borel subgroup $B_M$ of $M_L$ such that $N_L \rtimes B_M$ is a Borel subgroup for $G_{L}$. Also take a maximal split torus $T$ inside $B_M$, then $M_L$ is a standard Levi subgroup and $P_L$ is a standard parabolic subgroup. Take the dominant representative of $\{\mu\}$, again we still denote it as $\mu$. Then $P_{\mu}$ is compatible with $P_L$ and $M_L$, in other words, the natural inclusion induces an isomorphism \[M_L/(M_L \cap P_{\mu})\cong P_L/(P_L \cap P_{\mu}\] and $P_{\mu}$ is osculatory with $P_L$. Then $\B(M,F)$ contributes to the Berkovich compactification through the following map: \[\B(M,F)\xrightarrow{\theta} \Par_{t_1}(M_L)^{\Berk} \cong \Par_{t_1}(P_L)^{\Berk}\cong \Osc_{t_1}(P_L)^{\Berk}\hookrightarrow \Par_{t_1}(G_L)^{\Berk}.\] By Theorem \ref{main1}, the morphism  $\theta$ will send $\B(M,F)$ into the basic stratum of $\Par_{t_1}(M_L)^{\Berk}$. On the other hand, the cocharacter $\mu$ induces a minuscule cocharacter $\mu_M$ for $M_L$ and the natural map \[\Fl(M_L,\mu_M) \hookrightarrow \Fl(G_L,\mu)\] can be identified with the above map \[\Par_{t_1}(M_L) \hookrightarrow \Par_{t_1}(G_L),\] thus we can apply the functoriality of the Newton stratification (Proposition \ref{natural}), we see that the contribution $\B_{\tau}(M,F)$ will lies in the stratum corresponding to $b_M$, which is the image of $B(M,\mu_{M}^{-1})_{basic} \longrightarrow B(G,\mu^{-1})$.

Finally, the process of computing the contribution $\B_{\tau}(M,F)$ is the same as the process in the definition of strongly regular elements, so $b_M \in \mathrm{SR}(G,\mu^{-1})$ and each element of $\mathrm{SR}(G,\mu^{-1})$ will appear in such way.

\end{proof}

\begin{rem}

In practice, to compute $\mathrm{SR}(G,\mu^{-1})$ it is sufficient to consider those Levi $M$ appearing in the $\tau$-relevant parabolic subgroups. In \cite{rtw2010} subsection 3.3, there is a combinatorial description of $\tau$-relevant subgroups in terms of Dynkin diagrams. 
%So given $\mu$, we can first find the resulting $F$-rational type $\tau$, then compute $\tau$-relevant parabolic subgroups to determine $\mathrm{SR}(G,\mu^{-1})$.
\end{rem}

A further study of $\mathrm{SR}(G,\mu^{-1})$ will show that the image of the Berkovich map is closed in the $p$-adic period domain. It is sufficient to check this in non-degenerate case. 
\begin{cor}
\label{closedcor}
\begin{enumerate}
	\item 
The image of Berkovich embedding $\B_t(G,F)$ is closed in the $p$-adic period domain $\Fl(G,\mu)^{\Berk, b_0}$.

\item Assume that $G$ is quasi-split over $F$. Then the image of Berkovich embedding $\B_t(G,F)$ is also closed in the open Harder-Narasimhan stratum $\Fl(G,\mu)^{\Berk, \HN=b_0}$.
\end{enumerate}
\end{cor}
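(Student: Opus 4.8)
The plan is to deduce both statements from Theorem \ref{thm boundaries} together with the stratification of the Berkovich compactification $\ov{\B_t(G,F)}$ from Proposition \ref{levistrata}. First I would reduce to the non-degenerate case: as noted after Proposition \ref{explicit}, if some quasi-simple factor $G_i$ contributes a trivial type $t_i$, the building of $G_i$ maps to a point, and one can split off that factor without affecting $\B_t(G,F)$, $\Fl(G,\mu)^{\Berk,b_0}$ or the Newton stratification (which only depends on $G_{ad}$ and is a product over the $G_i$). So assume $\{\mu\}$ is non-degenerate; then by \cite{rtw2010} Proposition 3.29 (and its generalization discussed in subsection 2.1) $\theta$ is an embedding and $\ov{\B_t(G,F)}$ is a genuine compactification of $\B(G,F)$.

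For (1): since $\ov{\B_t(G,F)}$ is compact (being closed in the compact Hausdorff space $\Fl(G,\mu)^{\Berk}$), it suffices to show that the boundary $\ov{\B_t(G,F)}\setminus \B_t(G,F)$ is disjoint from $\Fl(G,\mu)^{\Berk,b_0}$; then $\B_t(G,F) = \ov{\B_t(G,F)}\cap \Fl(G,\mu)^{\Berk,b_0}$ is closed in $\Fl(G,\mu)^{\Berk,b_0}$, being the intersection of a compact set with the open period domain inside the ambient Hausdorff space. By Proposition \ref{levistrata} the boundary is the union over proper standard $\tau$-relevant Levi subgroups $M$ of the $G(F)$-translates of $\B_\tau(M,F)$. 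By Theorem \ref{thm boundaries} each such piece lies in the Newton stratum $\Fl(G,\mu)^{\Berk, b_M}$ with $b_M\in \mathrm{SR}(G,\mu^{-1})$ the image of the basic element of $B(M,\mu_M^{-1})$; and by Lemma \ref{lem SR non basic} such a $b_M$ is \emph{non basic}. Hence every boundary stratum sits in a non basic Newton stratum, which is disjoint from $\Fl(G,\mu)^{\Berk,b_0}$. This proves (1).

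For (2): assume $G$ is quasi-split. The argument is the same, but now I want to show the boundary strata avoid the (larger) semistable locus $\Fl(G,\mu)^{\Berk,\HN=b_0}$. Again by Theorem \ref{thm boundaries} each boundary stratum lies in $\Fl(G,\mu)^{\Berk, b_M}$ with $b_M\in \mathrm{SR}(G,\mu^{-1})$, and as observed in subsection 3.2, under the quasi-split hypothesis a strongly regular element is Hodge-Newton decomposable with respect to $\{\mu^{-1}\}$. Here is the crux: I would invoke Theorem 1.3 of \cite{eva2024}, which (in the Hodge-Newton decomposable situation) identifies the relevant Newton stratum with, or at least shows it is disjoint from, the semistable locus --- concretely, for $b\in B(G,\mu^{-1})$ non basic and Hodge-Newton decomposable, the stratum $\Fl(G,\mu)^{\Berk,b}$ does not meet $\Fl(G,\mu)^{\Berk,\HN=b_0}$. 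Granting this, every boundary stratum of $\ov{\B_t(G,F)}$ is disjoint from $\Fl(G,\mu)^{\Berk,\HN=b_0}$, so $\B_t(G,F) = \ov{\B_t(G,F)}\cap \Fl(G,\mu)^{\Berk,\HN=b_0}$ is closed in $\Fl(G,\mu)^{\Berk,\HN=b_0}$, exactly as in (1).

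The main obstacle is the precise input needed from \cite{eva2024} in part (2): one has to be careful that ``strongly regular $\Rightarrow$ Hodge-Newton decomposable'' combined with the cited result really forces the Newton stratum to avoid the \emph{entire} semistable (HN) locus and not just the open admissible locus $\Fl(G,\mu)^{\Berk,b_0}$. Since the HN stratification lacks a clean dimension formula or closure relation (unlike the Newton stratification, cf. Proposition \ref{prop Newton}), one cannot argue by a dimension count as in the proof of Theorem \ref{thm BT vs p-adic period}; instead the disjointness must come from the structural description of semistability in the Hodge-Newton decomposable case, which is exactly what \cite{eva2024} Theorem 1.3 supplies. The quasi-split hypothesis is used precisely to guarantee strongly regular elements are Hodge-Newton decomposable; Remark \ref{rem stronger closed} indicates that a different argument might remove it, but I would not pursue that here.
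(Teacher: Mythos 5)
Your proposal is correct and follows essentially the same route as the paper: reduce to the non-degenerate case, use Proposition \ref{levistrata} and Theorem \ref{thm boundaries} to place each boundary stratum in the Newton stratum of a strongly regular element, then apply Lemma \ref{lem SR non basic} for part (1) and, for part (2), the fact (Theorem 1.3 of \cite{eva2024}) that a non basic Newton stratum meeting the semistable locus must be Hodge-Newton indecomposable, combined with strong regularity implying Hodge-Newton decomposability in the quasi-split case. Your reading of the input from \cite{eva2024} is exactly the one the paper uses, so the worry you flag at the end does not cause a gap.
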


\begin{proof}

The first statement follows from Lemma \ref{lem SR non basic}, since for any boundary stratum corresponding to a proper $M$ of $G$, its contribution $b \in \mathrm{SR}(G,\mu^{-1})$ can not be the basic element.

For the second assertion, we apply Theorem 1.3 of  \cite{eva2024}, which implies that if a non basic Newton stratum $\Fl(G,\mu)^{\Berk, b}$ intersects non trivially with the semistable locus $\Fl(G,\mu)^{\Berk, \HN=b_0}$, then $b$ is Hodge-Newton indecomposable. Under the assumption that $G$ is quasi-split, each non-basic element in $\mathrm{SR}(G,\mu^{-1})$ is Hodge-Newton decomposable. Therefore if a  contribution $\B_{\tau}(M,F)$ intersects with $\Fl(G,\mu)^{\Berk, \HN=b_0}$, it will lie in the basic locus. Then previous claim shows that $M$ has to be the whole group $G$. We are done.

\end{proof}

\begin{rem}\label{rem stronger closed}
One may prove Corollary \ref{closedcor} (2) without the quasi-split assumption by a different approach. More precisely, in Proposition \ref{weak} we have shown $\B_t(G, F)\subset \Fl(G,\mu)^{\Berk, \HN=b_0}$. One may prove a similar result for each boundary stratum $\B_\tau(M, F)$, by using the parabolic induction results for non basic Harder-Narasimhan strata, cf. \cite{DOR10} or \cite{shen2023} Theorem 3.9. As we will not need non basic Harder-Narasimhan strata in the following, we leave the details to the interested reader.
\end{rem}

\section{Retraction maps for $\GL_n$}
\label{retraction}

 In this section, we discuss  some special examples. We will mainly study the case $G=\GL_n$ with $n\geq 2$, though some results in fact hold for more general groups. Let $\mu$ be a minuscule cocharacter of $G$ of the form $(1^d, 0^{n-d})$ such that $(d, n)=1$. Since we will always work with Berkovich spaces in this section, we simply denote $\Fl(G,\mu)^{b_0}=\Fl(G,\mu)^{\Berk, b_0}$ and $\Fl(G,\mu)^{\HN=b_0}=\Fl(G,\mu)^{\Berk,\HN=b_0}$.
 Inspired by the works of \cite{pv1992} and \cite{vos2000}, we will construct a continuous retraction map \[r: \Fl(G,\mu)^{b_0}\lra \B(G,F).\] Here ``retraction'' means that the Berkovich embedding $\theta: \B(G,F)\hookrightarrow \Fl(G,\mu)^{b_0}$ by Theorem \ref{thm BT vs p-adic period} will be a section of $r$.  
 This map $r$ will generalize the Drinfeld map (cf. \cite{Dri74, Ber95}) in the case $d=1$ \[r:\Omega^{n}\lra \B(G,F).\]
 We will also discuss some analogy with tropical geometry and propose a new method to study the $p$-adic period domain $\Fl(G,\mu)^{b_0}$ and the semistable locus $\Fl(G,\mu)^{\HN=b_0}$.
 
 \subsection{$p$-adic period domain and the stable locus}
We first make a few remarks on comparison with the setting of \cite{pv1992} and \cite{vos2000}.
Both loc. cit. work with semisimple simply connected groups like $\SL_n$. Here we will use the adjoint group $\PGL_n$, as we want to use minuscule cocharacters etc. 
 Moreover, both \cite{pv1992} and \cite{vos2000} use some geometric invariant theory, which involves choosing a $G$-equivariant (very ample) line bundle $\mathcal{L}$ on the flag variety $G/P_\mu$ to talk about semistable and stable locus. Since $\SL_n\ra\PGL_n$ is an isogeny with a finite central kernel, we can always rescale $\cL$ by $\cL^{n}$ to get a $\PGL_n$-equivalent line bundle.  This will not influence the geometry of semistable or stable locus. In the rest of this section, we modify our notation by setting $G=\PGL_n$, the parabolic $P=P_\mu$ is defined by the induced minuscule cocharacter $\mu$ of $\PGL_n$.

Take a maximal split torus $T$ together with a Borel subgroup $B$, and consider the resulting root system and fundamental (positive) weights $\omega_i$. The maximal parabolic subgroups $P_i$ containing $B$ are bijection with the fundamental weights $\omega_i$. And any parabolic subgroup containing $B$ is in the form $\cap_i P_i$, thus corresponding to a non-empty subset $I$ of the set of fundamental weights. For such $P$,  one  can further consider a positive weight $\lambda=\sum_{i\in I}m_i \omega_i$ ($m_i$ is positive) and thus produce a highest weight representation $V=V_\lambda$ with a highest weight vector $e_{\lambda}$ (well defined up to units). 

In our case, the cocharacter $\mu$ corresponds to the fundamental weight $\lambda=\omega_d$.
Then we can identify the flag variety (which is the Grassmannian $\Gr(d, n)$)
 \[X:=\Fl(G,\mu)=G/P_\mu\] with the orbit $G(e_{\lambda})$ inside the projective space $\mathbb{P}(V)$ over $F$.  As in subsection \ref{subsection Newton and HN}, we have the associated $p$-adic period domain \[X^{b_0}\subset X^{\Berk}\] and semistable locus \[X^{ss}:=X^{\HN=b_0}\subset X^{\Berk}.\] Both are open subspaces of the Berkovich flag variety $X^{\Berk}$. Moreover, by Proposition \ref{prop HN} we have
\[X^{b_0}\subset X^{ss}.\] On the other hand, there is a variant of $X^{ss}$, the stable locus $X^{s}$ which classifies stable objects in the corresponding Harder-Narasimhan theory (for filtered vector spaces with $G$-structure or admissible modifications of $G$-bundles). By construction we have an open immersion
\[X^s\subset X^{ss}.\]

The closed embedding $X\hookrightarrow \mathbb{P}(V)$ further induces a very ample line bundle $\cL$ (pullback of $O(1)$ on $\mathbb{P}(V)$) on the flag variety $X$ and $\cL$ is also a $G$-equivariant line bundle. Use this line bundle $\cL$ and geometric invariant theory (for the $T$-action on $X$), we can study the semistable and stable locus respect to $T$ \[X(T,\cL)^{ss} \quad \tr{and} \quad X(T,\cL)^s.\] Both are open subschemes of $X$.
When $T$ varies, these describe the above semistable and stable locus:
\begin{prop}\label{prop GIT ss and s}
We have the following equalities of Berkovich analytic spaces over $F$
\[X^{ss}=\bigcap_T X(T,\cL)^{ss, \Berk}, \quad X^s=\bigcap_TX(T,\cL)^{s, \Berk},\]
where $T$ runs over all maximal split torus.
\end{prop}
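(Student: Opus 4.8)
The plan is to reduce the statement, point by point on the Berkovich space $X^{\Berk}$, to the already known algebraic comparison between weak admissibility and $T$-invariant GIT-semistability (\cite{DOR10} Theorem 9.7.3, \cite{pv1992} sections 1 and 2, \cite{totaro}), and then to transport that comparison across analytification. Since $X^{ss}=X^{\HN=b_0}$, $X^{s}$, and each $X(T,\cL)^{ss,\Berk}$, $X(T,\cL)^{s,\Berk}$ are open subspaces of $X^{\Berk}$, it suffices to compare their underlying point sets. First I would fix $x\in X^{\Berk}$, write $K=\mathcal{H}(x)$ for its complete residue field, and let $C$ be a completed algebraic closure of $K$; then $x$ gives a $C$-point $x_C\in X(C)=\Gr(d,n)(C)$, i.e.\ a $\mu$-filtration $\mathcal{F}_x$ on $C^n$, equivalently (via the Bialynicki--Birula and Beauville--Laszlo maps of subsection \ref{subsection Newton and HN}) a modification $\mathcal{E}_{1,x}$ of the trivial bundle on the Fargues--Fontaine curve. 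On the one hand, by construction of the Harder--Narasimhan stratification, its compatibility with base change, and its description on geometric points, one has $x\in X^{ss}$ iff $\mathcal{E}_{1,x}$ is semistable iff the filtered vector space $(C^n,\mathcal{F}_x)$ is weakly admissible; since here the underlying isocrystal is trivial, this is a slope inequality of the shape $\dim(\mathcal{F}_x^{1}\cap U_C)\le\frac{d}{n}\dim U$ (up to normalization) tested over \emph{all} $F$-subspaces $U\subset F^n$. On the other hand, $X(T,\cL)^{ss,\Berk}$ is the preimage of the open subscheme $X(T,\cL)^{ss}\subset X$ under the canonical map $X^{\Berk}\to X$, and the semistable locus of a torus action commutes with extension of the base field (the acting group being a torus), so $x\in X(T,\cL)^{ss,\Berk}$ iff $x_C\in X(T_C,\cL_C)^{ss}(C)$; likewise for the stable locus.

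Next I would run the Hilbert--Mumford criterion over the algebraically closed field $C$: for a fixed maximal $F$-split torus $T$, $x_C$ is $T_C$-semistable iff the Mumford numerical invariant $M^{\cL}(x_C,\nu)$ has the correct sign for every $\nu\in X_{*}(T_C)=X_{*}(T)$. As $T$ ranges over all maximal $F$-split tori of $G=\PGL_n$ — which form a single $G(F)$-conjugacy class, with $X(gTg^{-1},\cL)^{ss}=g\cdot X(T,\cL)^{ss}$ by $G$-equivariance of $\cL$ — the cocharacters $\nu$ that occur are exactly those adapted to an $F$-rational basis of $F^n$, hence exactly those whose associated flag on $C^n$ has $F$-rational steps. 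The standard computation of $M^{\cL}(x_C,\nu)$ for $X=G/P_\mu$ with $\cL$ attached to the fundamental weight $\omega_d$ turns the sign condition along such a $\nu$ into the slope inequality for the steps $U$ of its flag; ranging over all such flags and using that a single destabilizing subspace at a time suffices, this is precisely the weak admissibility condition recalled above. This yields $X^{ss}=\bigcap_T X(T,\cL)^{ss,\Berk}$. For a clean citation one invokes \cite{DOR10} Theorem 9.7.3 (transported from the de Rham to the present Hodge--Tate formulation via the dual local Shimura datum of section \ref{section de Rham}) together with \cite{pv1992} sections 1 and 2 and \cite{totaro}.

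For the stable locus one uses the hypothesis $(d,n)=1$: by \cite{pv1992} section 1 (see also \cite{totaro}), when $(d,n)=1$ the weight $\omega_d$ linearizes $X=\Gr(d,n)$ so that, for every maximal $F$-split torus $T$, the $T$-action has no strictly semistable point, i.e.\ $X(T,\cL)^{s}=X(T,\cL)^{ss}$ as open subschemes of $X$. Intersecting over $T$, analytifying, and combining with the identity just proved and with the equality $X^{s}=X^{ss}$ recalled earlier, one obtains $X^{s}=X^{ss}=\bigcap_T X(T,\cL)^{ss,\Berk}=\bigcap_T X(T,\cL)^{s,\Berk}$. (Alternatively one checks directly that a point which is $T'$-semistable for \emph{every} maximal $F$-split torus $T'$ has $M^{\cL}(x_C,\nu)\neq 0$ for every nontrivial $\nu$, hence is $T$-stable for every $T$.)

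The only real work is the numerical translation of the middle paragraph: one must match the Mumford criterion attached to $F$-split tori — which a priori only probes $F$-rational flags — with the analytically defined membership $x\in X^{\HN=b_0}$. This rests on the fact, already visible in the description $\Fl(G,\mu)^{\ad}\setminus\Fl(G,\mu)^{\ad,\HN=b_0}=\bigcup_i G(F)Z_i$ with the $Z_i$ Schubert varieties, that weak admissibility of a modification of the \emph{trivial} bundle is detected on $F$-subspaces of $F^n$ alone, together with careful bookkeeping of the normalization of the Mumford invariant, of the linearization $\cL$ (possibly replaced by $\cL^{\otimes n}$ to be $\PGL_n$-equivariant, with no effect on the (semi)stable loci), and of the Harder--Narasimhan slope function. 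Once these conventions are pinned down, the remaining steps — analytification of open subschemes, identification of Berkovich points with geometric points, base change of GIT quotients, and the "one subspace at a time" reduction in Harder--Narasimhan theory — are formal.
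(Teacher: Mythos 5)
Your proposal is correct and follows essentially the same route as the paper: the paper's proof is precisely an appeal to \cite{pv1992} (Proposition 2.6, Corollary 2.8.2 and section 3) together with \cite{DOR10} Theorem 9.7.3 and \cite{totaro}, i.e.\ the Hilbert--Mumford/weak-admissibility comparison over all maximal $F$-split tori that you sketch in your middle paragraph. One small caution: your phrase ``the equality $X^{s}=X^{ss}$ recalled earlier'' is not available at this point in the paper (there it is deduced \emph{from} this proposition), so for the stable-locus equality you should instead either invoke the stable version of the cited GIT comparison directly, or use your parenthetical Hilbert--Mumford argument together with the elementary observation that under $(d,n)=1$ a semistable object of slope $d/n$ admits no proper subobject of the same slope, hence is stable.
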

\begin{proof}
This follows from \cite{pv1992} Proposition 2.6, Corollary 2.8.2, and the discussions in section 3 there. Note that in \cite{pv1992}, they work with rigid analytic spaces, and they introduced and studied $X(T,\cL)^{ss}$ and $X(T,\cL)^{s}$ first, then proceeded to show there are rigid analytic structures on the intersections over all $T$. Moreover, there are moduli interpretations for the intersections (the right hand side above) given by the semistable and stable objects (the left hand side above).  See also \cite{DOR10} Theorem 9.7.3 (whose proof in turn origins from \cite{totaro}; the later proves the conjecture in \cite{RZ96} 1.51, which in turn was motivated by \cite{pv1992}).
\end{proof}

To study $X(T,\cL)^{ss}$ and $X(T,\cL)^{s}$, following \cite{pv1992} we further introduce certain convex hulls. The action of $T$ on $H^0(X,\cL)$ produce the weight decomposition \[V^*=H^0(X,\cL)=\bigoplus_{\beta}H^0(X,\cL)_{\beta},\] where $\beta \in X^*(T)$.   For any geometric point $x \in X(k)$ ($k|F$ an algebraically closed field), evaluate $V^*$ at $x$, and let \[S_x \subset X^*(T)\otimes \mathbb{R}\] be the set of those $\beta$ with $V_{\beta}^*$ being non-vanish at $x$. Let \[Conv(S_x)\subset X^*(T)\otimes \mathbb{R}\] denote its convex hull. The Lemma 1.2 in \cite{pv1992} shows that $x \in X(T,\cL)^{ss}$ if and only if $0 \in Conv(S_x)$, and $x \in X(T,\cL)^{s}$ if and only if $0$ is an interior point of $Conv(S_x)$. Applying this lemma and Lemma 1.3 of \cite{pv1992}, we have
\begin{prop}\label{prop ss equals s}
Assume that the cocharacter $\mu=(1^d,0^{n-d})$ satisfies the condition $(d, n)=1$. Then
for any maximal split torus $T$, we have
 \[X(T,\cL)^s=X(T,\cL)^{ss}.\]
 In particular, under this assumption, by Proposition \ref{prop GIT ss and s} we get
 \[X^{ss}=X^s. \]
\end{prop}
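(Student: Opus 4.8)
\emph{The plan} is to reduce to geometric invariant theory for the diagonal torus acting on the Grassmannian, where the statement becomes an elementary coprimality fact about matroid base polytopes. By Proposition~\ref{prop GIT ss and s} it suffices to prove $X(T,\cL)^{s}=X(T,\cL)^{ss}$ for a single maximal split torus $T$, all of them being conjugate in $G=\PGL_n$; fix $T$ to be the image of the diagonal torus of $\GL_n$. As $X(T,\cL)^{s}$ and $X(T,\cL)^{ss}$ are open subschemes of $X=\mathrm{Gr}(d,n)$, it is enough to check equality on $\ov F$-points and then analytify. First I would spell out the criterion of \cite{pv1992} Lemma~1.2 in this case. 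With $X$ Plücker-embedded in $\mathbb P(V)$, $V=\wedge^{d}$ of the standard representation, and $\cL=\mathcal O(1)|_{X}$ (or a power thereof, which is harmless below), the space $H^{0}(X,\cL)=V^{\ast}$ has the $T$-weight basis $\{e_{I}^{\ast}\}_{|I|=d}$ of weight $-\chi_{I}$, $\chi_{I}=\sum_{i\in I}\varepsilon_{i}$; viewing $\chi_{I}$ inside $X^{\ast}(T)_{\mathbb R}=\{v\in\mathbb R^{n}:\sum_i v_i=0\}$ as $\tilde\chi_{I}:=\mathbf 1_{I}-\tfrac dn\mathbf 1$, a geometric point $x\in X(k)$, with associated (realizable) matroid $M_{x}$ of rank $d$ on $\{1,\dots,n\}$ whose basis set is $\mathrm{supp}(x)=\{I:p_{I}(x)\neq 0\}$, has $S_{x}=\{\tfrac dn\mathbf 1-\mathbf 1_{I}:I\in\mathrm{supp}(x)\}$ (rescaling $\cL$ only dilates $S_x$, changing nothing). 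Thus $\mathrm{Conv}(S_{x})$ is, under the affine isomorphism $v\mapsto\tfrac dn\mathbf 1-v$, the matroid base polytope $P(M_{x})=\mathrm{Conv}\{\mathbf 1_{I}:I\in\mathrm{supp}(x)\}$, and \cite{pv1992} Lemma~1.2 reads
\[ x\in X(T,\cL)^{ss}\iff\tfrac dn\mathbf 1\in P(M_{x}),\qquad x\in X(T,\cL)^{s}\iff\tfrac dn\mathbf 1\in\mathrm{int}\,P(M_{x}), \]
interiors being taken inside the hyperplane $\sum_i v_i=d$. So everything comes down to showing $\tfrac dn\mathbf 1\in P(M_{x})\Rightarrow\tfrac dn\mathbf 1\in\mathrm{int}\,P(M_{x})$.

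For this I would use Edmonds' hyperplane description of the matroid base polytope,
\[ P(M_{x})=\Big\{v\in\mathbb R^{n}:\textstyle\sum_i v_i=d,\ 0\le v_i\le 1,\ \sum_{i\in S}v_i\le\mathrm{rk}_{M_{x}}(S)\ \text{for all }\emptyset\neq S\subsetneq\{1,\dots,n\}\Big\}. \]
If $\tfrac dn\mathbf 1$ lay on a proper face of $P(M_{x})$ — a case that also contains the situation where $M_{x}$ is disconnected and $P(M_{x})$ fails to be full-dimensional — one of these relations would be an equality there; the box relations are strict since $0<\tfrac dn<1$ (as $0<d<n$), so we would get $\tfrac dn|S|=\mathrm{rk}_{M_{x}}(S)$, i.e.\ $d\,|S|=n\,\mathrm{rk}_{M_{x}}(S)$, for some $\emptyset\neq S\subsetneq\{1,\dots,n\}$. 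Since $(d,n)=1$ this forces $n\mid|S|$, impossible for $0<|S|<n$. Hence every facet inequality of $P(M_{x})$ is strict at $\tfrac dn\mathbf 1$, so $\tfrac dn\mathbf 1\in P(M_{x})$ implies $\tfrac dn\mathbf 1\in\mathrm{int}\,P(M_{x})$. This is the combinatorial content of \cite{pv1992} Lemma~1.3, and it yields $X(T,\cL)^{s}=X(T,\cL)^{ss}$ on $\ov F$-points, hence as open subschemes; analytifying and intersecting over all maximal split tori $T$, Proposition~\ref{prop GIT ss and s} gives $X^{s}=X^{ss}$.

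\emph{The hard part}, I expect, is not the coprimality computation but keeping straight that one must argue with the matroid polytope $P(M_{x})$ and its rank-function facets, not with the convex hull of an arbitrary family of hypersimplex vertices: for a general collection of $d$-subsets the barycenter $\tfrac dn\mathbf 1$ may well lie in the convex hull without being interior, even under $(d,n)=1$, and it is precisely the Plücker relations — realizability of $M_{x}$, i.e.\ that $\mathrm{supp}(x)$ is the basis set of a matroid — that exclude this. So the delicate point is that the criterion \cite{pv1992} Lemma~1.2 is invoked only at honest points $x\in X(k)$, where $\mathrm{Conv}(S_{x})$ genuinely is a matroid base polytope; once this is granted, the passage from the torus picture to $X^{ss}=X^{s}$ via Proposition~\ref{prop GIT ss and s} is immediate.
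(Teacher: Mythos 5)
Your argument is correct, but it is genuinely more self-contained than the paper's proof, which is a bare citation: the paper simply refers to \cite{pv1992}, Theorem 1.1 and Corollary 2.4, i.e.\ it delegates both the GIT weight-polytope criterion and the coprimality combinatorics to van der Put--Voskuil (having quoted their Lemma 1.2 just before the statement). You keep the same reduction -- conjugacy of maximal split tori plus $G$-equivariance of $\cL$ to fix the diagonal torus, checking on geometric points, and the criterion ``$0\in\mathrm{Conv}(S_x)$ vs.\ $0\in\mathrm{int}\,\mathrm{Conv}(S_x)$'' -- but you replace the citation of \cite{pv1992} Lemma 1.3/Theorem 1.1 by an explicit matroid-theoretic verification: $\mathrm{Conv}(S_x)$ is (up to the central shift and sign, and up to the harmless dilation coming from replacing $\cL$ by a power) the base polytope $P(M_x)$ of the Pl\"ucker support matroid, and Edmonds' rank-inequality description shows that a facet through $\tfrac dn\mathbf 1$ would force $d|S|=n\,\mathrm{rk}_{M_x}(S)$ for some proper nonempty $S$, impossible when $(d,n)=1$; the tightness-at-a-non-interior-point step is valid (the locus where all defining inequalities are strict is open in the ambient hyperplane and contained in $P(M_x)$), and it correctly absorbs the lower-dimensional case of a disconnected matroid. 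What your route buys is transparency about exactly where realizability enters -- as you note, for an arbitrary collection of $d$-subsets the barycenter can lie in the hull without being interior (e.g.\ $n=5$, $d=2$, supports $\{12,34,35,45\}$), so the basis-exchange/matroid structure of the Pl\"ucker support is essential -- whereas the paper's citation is shorter and inherits from \cite{pv1992} the formulation that is reused later in Section 5 (in particular the same criterion applied fiberwise over $O_F$ and over the residue field). No gaps; only cosmetic caveats: the reduction to a single $T$ should explicitly invoke $G(F)$-conjugacy of maximal split tori together with the $G$-linearization of $\cL$, and the ``power of $\cL$ is harmless'' remark implicitly uses that the weight support for $\cL^k$ is the set of $k$-fold sums of elements of $S_x$ (projective normality of the Pl\"ucker embedding), whose hull is the $k$-fold dilate.
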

\begin{proof}
	See \cite{pv1992} Theorem 1.1 and Corollary 2.4.
\end{proof}
The conclusion $X(T,\cL)^s=X(T,\cL)^{ss}$ is quite strong, which will be crucial for the following constructions.
From now on, we assume that the cocharacter $\mu=(1^d,0^{n-d})$ satisfies the condition $(d, n)=1$.
 As a corollary, we get a $G(F)$-equivariant inclusion of open subspaces of $X^\Berk$
 \[X^{b_0}\subset X^s. \]
 To construct a retraction map $r: X^{b_0}\ra \B(G,F)$ for the embedding $\theta: \B(G,F)\hookrightarrow X^{b_0}$, it suffices to construct a retraction map \[r: X^s\lra \B(G,F).\] By restriction to $X^{b_0}$, we get the desired map.

\subsection{The retraction map for a maximal torus}\label{subsec retration appartment}

In section 3 of \cite{pv1992}, van der Put and Voskuil implicitly constructed a map from $X(T,\cL)^{s}$ to the corresponding apartment $A_T$ inside $\B(G,F)$. Their construction is through a point-wise  description, thus they need to pick up a test field $K$.  We will review their construction, complete their arguments at some points, and adapt it to the setting of Berkovich spaces.

Let $K|F$ be a non-archimedean extension with $K$ algebraic closed. In \cite{pv1992} only $K=\mathbb{C}_p$ was considered, but it is  important to go beyond $\mathbb{C}_p$ when working with Berkovich spaces. A point in the Berkovich space (or the algebraic variety) may be realized as a $K$-point for different test fields $K$, but it will clear from the context that the construction is well defined.

  %To avoid too many symbols, we make the following convention about  points: for related algebraic object $X$ over $F$, by $K$ points we usually mean the based changed set $X_{K}(K)$, though sometimes we omit the subscript and write $X(K)$ for simplicity. We use same convention for integral (like $K^0$) points. But for analytic object like Berkovich space, we always consider the original object $X^{ber}_{F}$. By $K$-points for it, we view $X_K(K)$ as a subset inside the Berkovich space through the map $X_{K}(K)\longrightarrow X_{K}^{ber}(K)\longrightarrow X^{ber}(K)$. Although this map is not injective, but it will be clear that the construction is well defined and is independent of choice of test field $K$.

%\textcolor{blue}{The notion problem is annoying...although nothing serious, but not easy to  state things cleanly.}

Let $v: K^* \rightarrow \mathbb{R}$ denote the additive valuation of $K$ extending the discrete valuation on $F$ and satisfying $v(\pi)=1$ (here $\pi$ is a uniformizer of $F$). Using the same sign convention of Tits in \cite{tits1979}, we get the following map \[v_T: T(K)\cong \Hom (X^*(T),K^*)\longrightarrow \Hom(X^*(T),\mathbb{R}).\]
Pick up a special vertex $o$ in the corresponding apartment $A_T$ for $T$, and use $o$ (as the origin) to identify $A_T$ with $\Hom(X^*(T),\mathbb{R})$. The vertex $o$ also defines an integral model $G_o$ over $O_F$ for $G$ by the Bruhat-Tits theory, which is a reductive group scheme since $o$ is special and $G=\PGL_n$ is adjoint. For simplicity we may also use $G$ to denote this integral model. Previous discussions ($T$, $P$, $G/P$, ...) hold over $O_F$.  In particular, we have an open subscheme\[X(T,\cL)^s\subset X\] of the scheme $X=G/P$ over $O_F$, which is the (properly) stable locus for the action of $T$ on $X$ with respect to $\cL$. 
Then we get an analytic subspace 
\[\wh{X(T,\cL)^s}^\Berk_\eta\subset X(T,\cL)^{s,\Berk}_\eta,\ \]
where $\wh{X(T,\cL)^s}$ is the $p$-adic completion of $X(T,\cL)^s$,  $\wh{X(T,\cL)^s}^\Berk_\eta$ is the Berkovich analytic generic fiber of the formal scheme $\wh{X(T,\cL)^s}$, and $X(T,\cL)^{s,\Berk}_\eta$ is the Berkovich analytification of the generic fiber of the $O_F$-scheme $X(T,\cL)^{s}$.  In the following we simply denote $X(T,\cL)^{s,\Berk}=X(T,\cL)^{s,\Berk}_\eta$ as the $F$-analytic space. For the field $K$ as above, note that we have
\[ \wh{X(T,\cL)^s}^\Berk_\eta(K)=\wh{X(T,\cL)^s}(O_K)=X(T,\cL)^s(O_K).\]

If necessary, replace $\cL$ by  $\cL^k$ for a suitable positive integer $k$, so that there exists an \textbf{integral} basis $\{f_1,...,f_m\}$ for $H^0(X,\cL)^{T}$ over $O_F$ with the property \[X(T,\cL)^{s}=\bigcup X_{f_i},\] where $X_{f_i}$ is the non-vanishing locus of $f_i$ and we view everything over $O_F$.
This implies that $\wh{X(T,\cL)^s}^\Berk_\eta$ is a finite union of affinoids inside $X^\Berk$.
Let  $\widetilde{K}$ denote the residue field. Applying the geometric invariant theory, we can perform the geometric quotient \[Z=X(T,\cL)^{s}/T,\] which is a projective scheme over $O_F$, since $X(T,\cL)^{s}=X(T,\cL)^{ss}$ by Proposition \ref{prop ss equals s} and our assumption $(d, n)=1$. Each  $K$-point of $X(T,\cL)^s$ has a finite stabilizer inside $T(K)$. As in section 3.4 of \cite{pv1992}, we have:
\begin{enumerate}
	\item 
$X(T,\cL)^s(O_K)/T(O_K)=Z(O_K)=Z(K)=X(T,\cL)^s(K)/T(K)$.

\item  $T(K) \times X(T,\cL)^s(O_K)\twoheadrightarrow X(T,\cL)^{s}(K)$. 
\end{enumerate}
We note that the equality $Z(O_K)=Z(K)$ and the surjectivity of (2) hold since $Z$ is projective over $O_F$, and the other two equalities in (1) hold since $Z$ is a geometric quotient.
These properties determine uniquely a map (which comes essentially from Proposition \ref{prop ss equals s} and our assumption $(d, n)=1$)
\[r_{T,o}: X(T,\cL)^s(K) \longrightarrow \Hom(X^*(T),\R) \] with the property
\begin{enumerate}
	\item 

 $r_{T,o}(X(T,\cL)^s(O_K))=0$, and 

\item for any $t\in T(K)$ and $x\in X(T,\cL)^s(K)$, 
 \[r_{T,o}(t.x)=-v_{T}(t)+r_{T,o}(x).\]
\end{enumerate}
Through the identification with origin $o$, we can further view the target as the apartment $A_T$ inside the Bruhat-Tits building $\B(G,F)$. The resulting map is still denoted by \[r_{T,o}: X(T,\cL)^s(K) \longrightarrow A_T.\]  Letting $K$ vary, we get a $T$-equivariant map from the associated Berkovich $F$-analytic space of $X(T,\cL)^s$ to $A_T$ \[r_{T,o}: X(T,\cL)^{s, \Berk} \longrightarrow A_T,\]
which we  call  the apartment retraction map. Here we use the name ``retraction" because its composition with the Berkovich map (restrict to the apartment) is identity, cf. Theorem \ref{retract} (for the global version of retraction map, which is harder). By construction, we have
\[ r_{T,o}^{-1}(o)=\wh{X(T,\cL)^s}^\Berk_\eta.\]

 For later upgrading into Berkovich space, we will always view the set of points $X(T,\cL)^{s}(K)$ (and other varieties) as a subspace of the Berkovich flag variety $X^{\Berk}$ and equip them with the subspace topology. Now we will show that the resulting map $r_{T,o}$ is a continuous map of topological spaces.

To prove such continuous results, we need to introduce more notions. The weight decomposition holds as $O_F$-modules \[H^0(X,\cL)=\bigoplus_{\beta}H^0(X,\cL)_{\beta},\] and the previous integral basis $\{f_1,...,f_m\}$ is the basis for weight $0$ space. For other nonzero $\beta$, we also pick up an integral basis $\{f_{\beta,1},...,f_{\beta,\beta_m}\}$. 
For each $\beta$ and each point $x \in X(K)$, we define the follow notion \[||x||_{\beta}=\max\{|f(x)|_x\, |\, f \in H^0(X,\cL)_{\beta}\}=\max\{|f_{\beta,i}(x)|_i\}.\] Here we evaluate the line bundle $\cL$ at $x$ with a norm $|\ |_x$. It is a one dimensional space over $K$, thus the norm is well defined up to positive scalar. In practice we can first put a reasonable non-archimedean metric on $\cL$. The flag variety can be covered by open cells isomorphic to the affine space $\A^{N}$ and the line bundle $\cL$ is induced by $O(1)$ through the embedding into the projective space\footnote{ The situation is the  same as complex geometry, we can endow such metric on $\cL$ just like the case of equipping canonical metric for projective space in complex geometry.} $\mathbb{P}(V)$.
Since we only need to care about some ratios like \[\frac{||x||_{\beta}}{||x||_{\gamma}},\] which are canonically defined, thus independent of choice of such metric. Here we observe that under Berkovich topology, for any $\beta$, the map $|X^\Berk|\ra \R_+,\, x \mapsto ||x||_{\beta}$ is continuous.
With the help of this function, we have the following identifications: 

\[X(T,\cL)^s(O_K)=r_{T,o}^{-1}(o)(K)=\{x\in X(T,\cL)^{s}(K) \,|\, ||x||_{\beta}\leq ||x||_0, \ any \  \beta \}.\]
Now we can verify the continuous property. Let $\mathbb{F}_q$ be the residue field of $O_F$.
\begin{prop}\label{prop continuous retract torus}
 This apartment retraction map  $r_{T,o}$ is continuous.
 \end{prop}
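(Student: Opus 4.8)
The plan is to reduce continuity to a purely local, combinatorial statement on the finitely many pieces into which $X(T,\cL)^{s,\Berk}$ decomposes. First I would use $T$-equivariance: since $r_{T,o}(t.x) = -v_T(t) + r_{T,o}(x)$ and the $T(K)$-translates of the open subspace $\wh{X(T,\cL)^s}^\Berk_\eta$ cover $X(T,\cL)^{s,\Berk}$ (by surjectivity of $T(K)\times X(T,\cL)^s(O_K)\twoheadrightarrow X(T,\cL)^s(K)$, which globalizes to Berkovich points by the usual spreading-out argument over algebraically closed test fields), it suffices to check continuity on $\wh{X(T,\cL)^s}^\Berk_\eta = r_{T,o}^{-1}(o)$ and, more precisely, on a neighborhood of each of its points inside $X(T,\cL)^{s,\Berk}$. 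Equivalently, fixing a point $x_0$ with $r_{T,o}(x_0)=o$, I would show that for each character $\beta\in X^*(T)$ and each $\epsilon>0$, the set of $x$ near $x_0$ with $|\langle r_{T,o}(x), \beta^\vee\rangle|<\epsilon$ (pairing against a basis of cocharacters, or rather checking each coordinate) is open.

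The key computational input is the explicit description already recorded just before the statement:
\[
X(T,\cL)^s(O_K) = \{\, x\in X(T,\cL)^s(K) \mid \|x\|_\beta \le \|x\|_0 \ \text{for all }\beta \,\},
\]
together with the fact that each $x\mapsto \|x\|_\beta$ is continuous on $|X^\Berk|$. The point is that for $t\in T(K)$ one has $\|t.x\|_\beta = |\beta(t)|\cdot \|x\|_\beta$, so the real numbers $\log(\|x\|_\beta/\|x\|_0)$ transform under $T$ by translation by the linear functional $\beta\circ v_T$; hence the vector $u = r_{T,o}(x)\in \Hom(X^*(T),\R)$ is pinned down by the requirement that, after translating by $u$, all ratios $\|{\cdot}\|_\beta/\|{\cdot}\|_0$ become $\le 1$ and at least one becomes an equality — i.e. $u$ is characterized as the unique point of the (bounded, since $X(T,\cL)^s = X(T,\cL)^{ss}$ by Proposition \ref{prop ss equals s}) convex region cut out by $\langle u,\beta\rangle \ge \log(\|x\|_\beta/\|x\|_0)$ realizing equality somewhere. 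Concretely, $\langle r_{T,o}(x),\beta\rangle$ is the value of an explicit $\max$–$\min$ expression in the continuous functions $\log(\|x\|_\gamma/\|x\|_0)$ — a "tropical" formula, exactly as in the Drinfeld case where $r$ is given by restriction of norms. Since finite $\max$ and $\min$ of continuous functions are continuous, each coordinate of $r_{T,o}$ is continuous on $\wh{X(T,\cL)^s}^\Berk_\eta$, and then on all of $X(T,\cL)^{s,\Berk}$ by equivariance.

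I expect the main obstacle to be making the phrase "$u$ is the unique point realizing equality somewhere" into a genuinely well-defined continuous formula: a priori which $\beta$ achieves the maximum can jump as $x$ varies, and one must check that the GIT input (properness of $Z = X(T,\cL)^s/T$ over $O_F$ under $(d,n)=1$, giving boundedness of $\mathrm{Conv}(S_x)$ with $0$ always in its interior, cf. \cite{pv1992} Lemmas 1.2, 1.3) forces the candidate formulas coming from different "active" inequalities to agree on overlaps, so that $r_{T,o}$ is continuous across the walls where the combinatorial type changes. This is precisely the sort of compatibility argument that is "missing in \cite{vos2000}." I would handle it by covering $X(T,\cL)^{s,\Berk}$ by the finitely many affinoid pieces $\wh{X_{f_i}}^\Berk_\eta$ (and their $T$-translates), on each of which one distinguished weight-$0$ section $f_i$ is invertible and one can normalize $\|x\|_0 = |f_i(x)|$, reducing to a finite, explicit system; on each piece the formula for $r_{T,o}$ is manifestly continuous, and on overlaps both formulas compute the same point of the apartment by the uniqueness in the characterization above. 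Finally I would invoke that the subspace topology on $X(T,\cL)^s(K)\subset |X^\Berk|$ used here is the correct one and that $A_T \cong \Hom(X^*(T),\R)$ carries its standard topology, so that checking continuity coordinate-by-coordinate suffices.
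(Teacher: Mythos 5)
Your opening moves coincide with the paper's: reduce by $T(K)$-equivariance to a neighborhood of an integral point of $X(T,\cL)^s(O_K)$, and work with the continuous ratios $\|x\|_\beta/\|x\|_0$ and the identification of the integral points as the locus where all these ratios are $\le 1$. The gap is in the central claim that $u=r_{T,o}(x)$ is ``the unique point of the convex region cut out by $\langle u,\beta\rangle\ge\log(\|x\|_\beta/\|x\|_0)$ realizing equality somewhere.'' As stated this is not a characterization at all: if that region had positive dimension, every point of its boundary would realize equality somewhere. What you actually need is that the region is a \emph{single} point (and, for continuity, a quantitative version of this), and proving it forces you into the special fibre, which your write-up never touches: writing $x=t.y$ with $y$ integral, equality $\|y\|_\beta=\|y\|_0$ holds exactly for $\beta$ in $S_{\bar y}$ of the reduction $\bar y$ over $\widetilde{K}$, and one needs $0$ to lie in the interior of $\mathrm{Conv}(S_{\bar y})$ --- equivalently, after replacing $\cL$ by a suitable power, every root has a positive multiple appearing in $S_{\bar y}$ (this is the input from \cite{pv1992}, section 3.4). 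That is precisely how the paper proves the proposition: from $\|x_0\|_\beta/\|x_0\|_0<1+\varepsilon$ for all $\beta$ one deduces $|\alpha(t)|$ within $\varepsilon$ of $1$ for \emph{every} root $\alpha$, hence $r_{T,o}(x_0)=-v_T(t)$ close to $0$. Your appeal to Lemmas 1.2 and 1.3 of \cite{pv1992} concerns $\mathrm{Conv}(S_x)$ at the generic-fibre point only; that bounds the region but does not collapse it to a point, so the uniqueness your argument rests on is unproved.

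The second unestablished step is the continuity itself. ``On each piece the formula for $r_{T,o}$ is manifestly continuous, and on overlaps both formulas compute the same point by the uniqueness'' is not an argument: no max--min formula is exhibited, and the wall-crossing compatibility is exactly the obstacle you flagged, so invoking ``the uniqueness in the characterization above'' is circular while that uniqueness is the missing ingredient. (Once the singleton statement is proved, no tropical formula is needed anyway: either a direct compactness/limit-point argument, or the paper's route --- the $\varepsilon$-estimate at integral points, then density of classical points in the Berkovich space together with Lemma \ref{lem top}, or a maximally complete test field --- finishes the proof.) Relatedly, your globalization of the decomposition $x=t.y$ from $K$-points to arbitrary Berkovich points via ``the usual spreading-out argument'' is exactly the point the paper handles with that density lemma, and should be made explicit.
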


 \begin{proof}
Take a field $K|F$ as above.
 Apply Lemmas 1.2 and 1.3 in \cite{pv1992} cited previously to study the set \[S_{\ov{y}}\subset X^\ast(T)\otimes\R\]  for any $\widetilde{K}$-point $\ov{y} \in X(\widetilde{K})$  over the residue field $\widetilde{K}|\F_q$. Here $T$ denotes the reduction to $\F_q$ of the previous integral $T$. Then we can rescale $\cL$ by $\cL^k$ for a sufficient large $k$, such that for any $\ov{y}$ and any root $\alpha$, some positive multiple $\alpha_{\ov{y}} \alpha  $ will appear in $S_{\ov{y}}$. See the discussion in section 3.4 of \cite{pv1992} for this fact.
 
 Let $T(K)$ act on $\Hom(X^*(T), \R)$ through the translation action $t \mapsto -v_{T}(t)$. Then its actions on both sides are homeomorphisms. Then it is  sufficient to show the continuity at any integral point $x \in X(T,\cL)^s(O_K)$. Then $r_{T,o}(x)=0\in \Hom(X^*(T),\R)\cong A_T$.
 
 Then for any $\beta$, we know that $||x||_{\beta}\leq ||x||_0$ and $||x||_0$ is obviously nonzero. Thus for any positive number $\varepsilon <1$, by the continuity of the function $\frac{||-||_\beta}{||-||_0}$, there exists a neighborhood $U_{\varepsilon}$  of $x$ in $X(T,\cL)^{s}(K)$ such that for any $x_0 \in U_{\varepsilon}$, $||x_0||_0$ is nonzero and for any $\beta$, we have \[\frac{||x_0||_{\beta}}{||x_0||_0} < 1+\varepsilon.\]
 Suppose $x_0=t.y$ with $y \in X(T,\cL)^s(O_K)$ and $t\in T(K)$. Then we have for any $\beta$, \[||x_0||_{\beta}=|\beta(t^{-1})|||y||_{\beta}.\] Due to the beginning discussion for $S_{\bar{y}}$ (here $\bar{y}$ is the reduction to the residue field), we know that for any root $\alpha$, there is a positive integer $\alpha_{\ov{y}}$ such that $\alpha_{\ov{y}} \alpha \in S_{\bar{y}}$. In particular, this implies \[||y||_{\alpha_{\ov{y}} \alpha}=||y||_0.\]
 Putting together, we get \[|\alpha(t)|^{\alpha_{\ov{y}}} > \frac{1}{1+\varepsilon }> 1- \varepsilon,\] thus \[|\alpha(t)|>1-\varepsilon.\] On the other hand, we can  do the same argument for the opposite root $-\alpha$, and we can the desired control \[\frac{1}{1-\varepsilon}> |\alpha(t)|>1-\varepsilon.\]
 Then let $\varepsilon$ be small enough. Combining with the fact that the root system span the  whole space $X^*(T)\otimes \R$, this will force $r_{T,o}(x_0)=-v_{T}(t)$ close to $0$ arbitrarily. 
 
 Notice that the set of $K=\mathbb{C}_p$ points is dense in the Berkovich space, through the following lemma, the apartment retraction map
 \[r_{T,o}: X(T,\cL)^{s, \Berk} \longrightarrow A_T\]
  is continuous.
 \end{proof} 
 
 \begin{lem}\label{lem top}
 	Let $f: A \longrightarrow B$ be a map between topological spaces. Suppose that there exists a dense subset $A_0$ of $A$ such that for any point $x \in A$, the restriction $A_0 \cup \{x\} \longrightarrow B$ is continuous. If $B$ is a regular Hausdorff space, then $f$ is a continuous map.
 \end{lem}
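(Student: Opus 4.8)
The plan is to check continuity of $f$ pointwise. Fix $x \in A$ and an arbitrary open neighborhood $V$ of $f(x)$ in $B$. Since $B$ is regular Hausdorff, I would first use regularity to pick an open set $W$ in $B$ with $f(x) \in W \subseteq \overline{W} \subseteq V$. Applying the hypothesis to the point $x$, the restriction $f|_{A_0 \cup \{x\}} \colon A_0 \cup \{x\} \to B$ is continuous, so the preimage of $W$ is open in the subspace $A_0 \cup \{x\}$; hence there is an open neighborhood $U$ of $x$ in $A$ with $f\bigl(U \cap (A_0 \cup \{x\})\bigr) \subseteq W$, and in particular $f(U \cap A_0) \subseteq W$.

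The core of the argument is to upgrade this to $f(U) \subseteq \overline{W} \subseteq V$. For an arbitrary $y \in U$ I would argue by contradiction: if $f(y) \notin \overline{W}$, then $B \setminus \overline{W}$ is an open neighborhood of $f(y)$, and applying the hypothesis now to the point $y$, continuity of $f|_{A_0 \cup \{y\}}$ yields an open neighborhood $U'$ of $y$ in $A$ with $f(U' \cap A_0) \subseteq B \setminus \overline{W}$. Since $U \cap U'$ is a nonempty open subset of $A$ (it contains $y$) and $A_0$ is dense, we may choose $z \in U \cap U' \cap A_0$. Then $z \in U \cap A_0$ forces $f(z) \in W \subseteq \overline{W}$, while $z \in U' \cap A_0$ forces $f(z) \in B \setminus \overline{W}$, a contradiction. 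Hence $f(y) \in \overline{W} \subseteq V$, and since $y \in U$ was arbitrary we get $f(U) \subseteq V$, proving continuity of $f$ at $x$; as $x$ was arbitrary, $f$ is continuous.

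The only real subtlety is precisely this passage from ``$f$ sends $U \cap A_0$ into $W$'' to ``$f$ sends all of $U$ into $\overline{W}$'': it is here that one must invoke the hypothesis a second time, at the test point $y$, combine it with the density of $A_0$ to manufacture a common approximating point $z$, and use regularity to keep $W$ and $B \setminus \overline{W}$ disjoint. Everything else is a routine unwinding of the subspace topology, so there is no serious obstacle beyond arranging these ingredients in the right order. It is also worth noting why the hypothesis is phrased this way: continuity of $f|_{A_0}$ alone would be far too weak to conclude, while the stated condition that $f|_{A_0 \cup \{x\}}$ is continuous for \emph{every} $x \in A$ is exactly what powers the density argument above, and this is the form in which it arises in the proof of Proposition \ref{prop continuous retract torus}, where $A_0$ is the dense set of $\mathbb{C}_p$-points.
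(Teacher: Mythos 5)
Your proof is correct. The paper itself omits the argument, stating only that the lemma is a routine exercise in topology, so there is no written proof to compare against; your write-up is exactly the standard way to fill it in. The key points are all in order: regularity supplies the closed neighborhood $\overline{W}\subseteq V$ of $f(x)$, the hypothesis applied at $x$ gives an open $U\ni x$ with $f(U\cap A_0)\subseteq W$, and the second application of the hypothesis at an arbitrary $y\in U$, combined with density of $A_0$ to produce a common test point $z\in U\cap U'\cap A_0$, yields the contradiction that forces $f(y)\in\overline{W}\subseteq V$. (As you implicitly note, only regularity is really used; the Hausdorff assumption in the statement is harmless but not needed for this argument.)
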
 
 This is a routine exercise in topology and we omit the proof. The Bruhat-Tits building $\B(G,F)$  is locally compact Hausdorff, thus it is regular Hausdorff, so we can apply this lemma to upgrade  the pointwise description into the Berkovich setting.

 \begin{rem}
 \begin{enumerate}
 	\item 
 In the above proof some base change functoriality is used implicitly. Through the extension $K|F$, we can identify $\B(G,F)$ with a subspace of $\B(G,K)$. The original apartment $A_T$ of $\B(G,F)$ is identified with the corresponding apartment $\widetilde{A_T}$  for $\B(G,K)$. Thus the group $T(K)$ can not act on $\B(G,F)$, but it acts on $A_T$ naturally by the identification $A_T=\widetilde{A_T}$, which is exactly the translation action defined in the proof.
 \item In fact, one can work with some large enough field $K$, e.g. certain maximally complete field in the sense of \cite{Poon}, to avoid the argument using the density of classical points and Lemma \ref{lem top}.
\end{enumerate} 
 \end{rem}

 Before going on, we prove some further properties of the map $r_{T,o}$.

 \begin{prop}\label{prop properties retract torus}
We have the following properties for the apartment retraction map:
\begin{enumerate}
	\item 
For any $g \in G(F)$, let $T_1$ denote the maximal split torus $gTg^{-1}$ and let $\widetilde{o}=g(o)$, then the translation by $g$ on the flag variety can identify their stable locus \[g(X(T,\cL)^s)=X(T_1,\cL)^s.\] Moreover, for any $x \in X(T,\cL)^{s}(K)$, we have \[g(r_{T,o}(x))=r_{T_1,\widetilde{o}}(g(x)).\]

\item The apartment retraction map $r_{T,o}$ is independent of choices of the special vertex $o$, and thus we also denote it by $r_{T}$ or $r_A$.
\end{enumerate}
 \end{prop}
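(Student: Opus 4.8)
The plan is to deduce both parts from the uniqueness built into the construction of $r_{T,o}$: it is the \emph{unique} map $X(T,\cL)^s(K)\to A_T$ which is $T(K)$-equivariant in the sense that $r_{T,o}(t.x)=-v_T(t)+r_{T,o}(x)$ and which sends $X(T,\cL)^s(O_K)$ to the base point $o$. So in each case I would exhibit the candidate map, check these two properties, and invoke uniqueness; then let $K$ vary (or use density of classical points together with Proposition~\ref{prop continuous retract torus}) to upgrade to maps of Berkovich spaces.

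For part (1), I would first settle the equality of stable loci. Left translation $\ell_g\colon X\to X$ intertwines the $T$-action with the $T_1=gTg^{-1}$-action, and conjugation by $g$ induces a linear isomorphism $X^\ast(T_1)_\R\xrightarrow{\sim}X^\ast(T)_\R$ fixing $0$; by the convex-hull criterion recalled just before Proposition~\ref{prop ss equals s} this carries $T$-(semi)stable points to $T_1$-(semi)stable points, whence $g\bigl(X(T,\cL)^s\bigr)=X(T_1,\cL)^s$ over $E$. Since $g$ carries the special vertex $o$ to $\tilde o=g(o)$, it moreover extends to an isomorphism of the Bruhat--Tits $O_F$-models of $X$, of $T$ and of the $G$-linearized bundle $\cL$ attached to $o$ and to $\tilde o$, so the equality holds integrally and in particular $g\bigl(X(T,\cL)^s(O_K)\bigr)=X(T_1,\cL)^s(O_K)$. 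Now consider $x\mapsto g^{-1}\bigl(r_{T_1,\tilde o}(g(x))\bigr)$, a well-defined map $X(T,\cL)^s(K)\to A_T$ by the previous sentence; it sends $X(T,\cL)^s(O_K)$ to $g^{-1}(\tilde o)=o$, and since the $g$-action on apartments has linear part exactly the above conjugation isomorphism (so that $v_{T_1}(gtg^{-1})$ corresponds to $v_T(t)$) it is $T(K)$-equivariant in the required sense. By uniqueness it equals $r_{T,o}$, which is precisely the identity $g(r_{T,o}(x))=r_{T_1,\tilde o}(g(x))$.

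For part (2) I would simply specialize (1). Given special vertices $o,o'$ of the apartment $A_T$, the fact that $G=\PGL_n$ is adjoint means that $v_T(T(F))$ is the full lattice of vertices of $A_T$, so there is $t_0\in T(F)$ with $t_0\cdot o=o'$. Applying (1) with $g=t_0$ — note $T_1=t_0Tt_0^{-1}=T$ and $\tilde o=o'$ — gives $r_{T,o'}\bigl(t_0(x)\bigr)=t_0\bigl(r_{T,o}(x)\bigr)$ for all $x$. Replacing $x$ by $t_0^{-1}(x)$ and using the $T(K)$-equivariance of $r_{T,o}$, so that $r_{T,o}(t_0^{-1}(x))=t_0^{-1}(r_{T,o}(x))$, yields $r_{T,o'}(x)=r_{T,o}(x)$; hence the map is independent of $o$, and we may denote it $r_T$ or $r_A$.

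The functional-equation and uniqueness manipulations above are formal once set up. The one point that requires genuine care is the integral assertion in part (1): verifying that translation by $g\in G(F)$ identifies the Bruhat--Tits $O_F$-models of $X=G/P$, of $T$ and of the linearized bundle $\cL$ at the special vertices $o$ and $g(o)$, so that the geometric quotient $Z=X(T,\cL)^s/T$ over $O_F$ and the norms $\|\cdot\|_\beta$ feeding into the definition of $r_{T,o}$ transport correctly. (For split $G$ this reduces to the familiar statement that $\ell_g$ maps the standard $O_F$-model of $G/P$ adapted to $o$ isomorphically onto the one adapted to $g(o)$, the equivariant bundle being matched via compatible choices of integral lattices in the highest-weight representation $V_\lambda$.) This is also the only place where the hypothesis $(d,n)=1$ intervenes indirectly, through the equality $X(T,\cL)^{ss}=X(T,\cL)^s$ of Proposition~\ref{prop ss equals s} that makes $Z$ projective and the whole construction of $r_{T,o}$ available.
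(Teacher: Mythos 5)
Your proposal is correct and takes essentially the same route as the paper: identify the integral models (hence the integral stable loci) at $o$ and $\widetilde{o}=g(o)$ via translation by $g$, reduce to integral points, and propagate by $T(K)$-equivariance, with part (2) obtained exactly as in the paper by specializing to $t_0\in T(F)$ with $t_0(o)=o'$. Packaging the verification through the uniqueness characterization of $r_{T,o}$ instead of computing directly on $x=t.x_0$ is only a cosmetic difference, since that uniqueness rests on the same decomposition.
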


 \begin{proof}

For the first statement, we first verify the $G(F)$-equivariant property for integral points.

Through the identification $\widetilde{o}=g(o)$, the resulting integral model $G_{\widetilde{o}}$ can be identified with $G_{o}$, then the $g$ translation produce  the following commutative diagram

\[\xymatrix{
X(T,\cL)^s(K) \ar[r]^{\cong} & X(T_1,\cL)^s(K)\\
X(T,\cL)^s(O_K) \ar[u] \ar[r]^{\cong} & X(T_1,\cL)^s(O_K) \ar[u]}\]

So the equality holds for $x \in X(T,\cL)^s(O_K)$. Further for such $x$ and any $t \in T(K)$, we have 
\[g(r_{T,o}(t.x))=g.t.r_{T,o}(x)=(gtg^{-1}).g.(o)=(gtg^{-1}).r_{T_1,\widetilde{o}}(g(x))=r_{T_1,\widetilde{o}}(gtx),\] so we have verified the first statement.

Here we implicitly use the base  change functoriality again. We identify the apartment $A_T$ for $\B(G,F)$ with the corresponding apartment $\widetilde{A_T}$ inside $\B(G,K)$ and do similar identification for another apartment $A_{T_1}$. Then $T(K)$ (resp $gT(K)g^{-1}$) acts on $A_T$ (resp. $A_{T_1}$) in the natural way.

The second statement follows from the first one. For any other special vertex $o_1$ inside the  apartment $A_T$, there exist $t \in T(F)$ such that $t(o)=o_1$ and such $t$ conjugation will not change the maximal torus $T$. Now apply the first statement for any $x \in X(T,\cL)^s(K)$, we get 
\[t.(r_{T,o}(x))=r_{T,o_1}(t.x)=t.r_{T,o_1}(x),\] therefore $r_{T,o}=r_{T,o_1}$.

\end{proof}

\subsection{The retraction map for $\GL_n$}

 Now we construct the (global) retraction map to the whole Bruhat-Tits building. The stable locus $X^s$ is an open  analytic subspace of $X^{\Berk}$. By Proposition \ref{prop GIT ss and s}, it is the intersection \[X^{s}=\bigcap_{g \in G(F)} X(gTg^{-1},\cL)^{s, \Berk}, \] or equivalently, it corresponds to the locus that is stable respect to any $F$-maximal split torus $T$ action. In particular, for each apartment $A$ of the Bruhat-Tits building, we have a map \[r_A: X^s\lra A.\] The problem is to prove that these $r_A$ are compatible when $A$ varies. We will prove it in the following lemma\footnote{Voskuil wrote this lemma in \cite{vos2000} as Proposition 3.4. But there the proof is too vague and it contains some mistakes, for example,  the reduction will lie in the normalizer of the torus instead of the centralizer. Here we give a new rigorous proof.}, which will guarantee some basic finiteness results and the continuity of the retraction map.

 \begin{lem}[\textbf{Compatibility lemma}]\label{lemma compatibility}

Let $A_1$ and $A_2$ be  two apartments inside the Bruhat-Tits building $\B(G,F)$. Let $z$ denote an interior point of their intersection $A_1 \cap A_2$ with respect to $A_1 \cup A_2$. For any stable point $x \in X^s$,  we have
\[r_{A_1}(x)=z\quad \Longleftrightarrow \quad r_{A_2}(x)=z.\]

 \end{lem}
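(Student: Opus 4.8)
The plan is to reduce to the case where $z$ is a special vertex, and then to reinterpret the condition $r_{A_i}(x)=z$ in terms of the reduction of $x$ in the special fibre of the Bruhat--Tits group scheme at $z$, where the hypothesis on $z$ will force the two tori involved to have the same reduction.

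First I would use base change. The maps $r_A$ were built pointwise over test fields $K$, and the recipe computing $r_A(x)$ for $x$ a $K$-point depends only on the $K$-point $x$ of $X$ and the quantities $\|x\|_\beta$ attached to it, not on the ground field; so, just as for the Berkovich map in Proposition~\ref{bc1}, the maps $r_A$ are compatible with any non-archimedean extension $L|F$. (Since $G=\PGL_n$ is split, every maximal split torus is a maximal torus and stays split over $L$, so each apartment $A_{T_i}$, the intersection $A_1\cap A_2$, and the interior-point condition are preserved, up to the canonical homeomorphism, under $i_{F,L}$.) Choosing $L|F$ so that the image of $z$ is a special vertex of $\B(G,L)$ --- possible by \cite{rtw2010} Proposition~1.6, cf. the remark after Proposition~\ref{explicit} --- I may assume that $z$ itself is a special vertex. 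Let $G_z$ be the associated Bruhat--Tits $O_L$-model of $G$, which is a reductive group scheme over $O_L$ since $G=\PGL_n$ is adjoint, and let $M:=\ov{G_z}$ be its special fibre, a split reductive group over the residue field $\wt L$. Because $z\in A_i=A_{T_i}$, the torus $T_i$ extends to a maximal torus of $G_z$ whose reduction $\ov{T_i}\subset M$ is a maximal split torus.

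Next I would reinterpret the fibre $r_{A_i}^{-1}(z)$. The group scheme $G_z$, hence the smooth projective $O_L$-model $X$ of the flag variety attached to it, depends only on $z$ and not on a choice of apartment through $z$; only the open subschemes $X(T_i,\cL)^s\subset X$ depend on the torus. By construction (cf. \S\ref{subsec retration appartment}, taking $z$ as the basepoint, which is legitimate by Proposition~\ref{prop properties retract torus}(2)) one has $r_{T_i,z}^{-1}(z)=\wh{X(T_i,\cL)^s}^\Berk_\eta$, and for $x$ realised as a $K$-point ($K|L$ algebraically closed) this says: $r_{A_i}(x)=z$ if and only if $x$ extends to an $O_K$-point of $X$ and its reduction $\ov x\in X(\wt K)$ lies in $X(T_i,\cL)^s$, i.e. $\ov x$ is $\ov{T_i}$-stable for the $\cL$-linearised action on $X_{\wt K}=\Fl(M,\mu)_{\wt K}$ (there is no ambiguity in ``stable'' here since $X(T_i,\cL)^s=X(T_i,\cL)^{ss}$ under $(d,n)=1$ by Proposition~\ref{prop ss equals s}, and the stable locus of a torus action commutes with the base change $\wt L\to\wt K$). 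Since the condition ``$x$ extends to an $O_K$-point of $X$'' does not involve $i$, this reduces the lemma to the assertion that $\ov x$ is $\ov{T_1}$-stable if and only if it is $\ov{T_2}$-stable.

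The heart of the argument, and the step I expect to be the main obstacle, is to show that the hypothesis ``$z$ interior to $A_1\cap A_2$ with respect to $A_1\cup A_2$'' forces $\ov{T_1}=\ov{T_2}$. The point is that the link of the special vertex $z$ in $\B(G,L)$ is canonically the spherical building $\Delta(M)$ of $M$, and under this identification the germ at $z$ of the apartment $A_{T_i}$ is the apartment $A(\ov{T_i})\subset\Delta(M)$ of the maximal split torus $\ov{T_i}$. The hypothesis says precisely that the germs of $A_1$ and $A_2$ at $z$ coincide, hence $A(\ov{T_1})=A(\ov{T_2})$ in $\Delta(M)$; and a maximal split torus of a split reductive group is recovered from its apartment (e.g. as the intersection of a Borel and the opposite Borel both lying in that apartment), so $\ov{T_1}=\ov{T_2}=:\ov T$. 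Once this is in place, both conditions $r_{A_1}(x)=z$ and $r_{A_2}(x)=z$ unfold to ``$x$ extends to an $O_K$-point of $X$ and $\ov x$ is $\ov T$-stable'', and are therefore equivalent; undoing the base change of the first step then gives the lemma. Apart from this geometric input on Bruhat--Tits theory, the remaining work is routine bookkeeping with GIT over the valuation ring $O_L$ and with the pointwise construction of the retraction maps.
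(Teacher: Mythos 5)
Your proposal is correct in its core mechanism and it is the same one the paper uses: reduce the equality $r_{A_1}(x)=z\Leftrightarrow r_{A_2}(x)=z$ to the statement that the reductions $\ov{T_1}=\ov{T_2}$ of the two tori in the special fibre of the Bruhat--Tits model at $z$ coincide, whence the two integral stable loci (equivalently, the stable loci over the residue field) coincide and the fibre description $r_{T_i,z}^{-1}(z)(K)=X(T_i,\cL)^s(O_K)$ gives the equivalence. The execution, however, differs in two genuine ways. First, to prove $\ov{T_1}=\ov{T_2}$ the paper does not use the link of $z$: it passes to a finite totally ramified extension $F_2$ whose ramification makes the mesh of special vertices finer than the $\varepsilon$ of the interior-point hypothesis, and then runs an explicit lattice computation showing that the $2(n-1)$ special vertices of $A_1$ nearest to $z$ determine $\ov{T_1}$; your identification of the germ of $A_{T_i}$ at $z$ with the apartment of $\ov{T_i}$ in the spherical building of the special fibre (plus ``opposite Borels intersect in the torus'') is the conceptual form of that same Claim, and it has the advantage of working directly in the non-discretely valued setting, where the paper's own Remark notes that ``closest special vertices'' no longer make sense. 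Second, and this is where the cost lies: the paper only needs its step over discretely valued fields because it treats general points by a density argument, using the continuity of the maps $r_{A_i}$ (Proposition \ref{prop continuous retract torus}) together with Hausdorffness of $A_1\cup A_2$; you instead handle an arbitrary point $x$ at once by base-changing to a possibly huge field $L$ (value group $\R$) making $z$ special. That route is continuity-free, but it quietly requires redoing the construction of the apartment retraction and of the identification $r_{T,z}^{-1}(z)=\wh{X(T,\cL)^s}^\Berk_\eta$ over the non-Noetherian valuation ring $O_L$ (integral basis of invariant sections, projectivity of the quotient, $Z(O_K)=Z(K)$, surjectivity of $T(K)\times X^s(O_K)\to X^s(K)$), which the paper's construction only sets up over $O_F$ for $F$ local and which is more than ``routine bookkeeping'' --- though it can be arranged, e.g.\ by translating the $O_F$-model at a special vertex of $A_i(F)$ by an element of $T_i(L)$ and invoking the equivariance of Proposition \ref{prop properties retract torus}, or avoided altogether by keeping the paper's density-plus-continuity step and applying your link argument only to classical points. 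With that caveat addressed, your argument is sound and, at the key geometric step, cleaner than the paper's.
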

 \begin{proof}
 
 It suffices to show one side implication: $r_{A_1}(x)=z$ will imply $r_{A_2}(x)=z$. 
 We will prove this statement in two steps: first, we deduce it for $x$ coming from some $p$-adic field; second, we generalize it to any points.

 Recall that the Bruhat-Tits building $\B(G,F)$ has a $G(F)$-invariant metric, which is unique up to scalar. We make the normalization of the metric such that two closest special vertices has distance 1. This is possible because such neighborhoods $\{z_1,z_2\}$ are transitive under $G(F)$-action. Moreover, for any finite extension $F_1|F$, suppose the ramification index is $e$, then we require the metric on $\B(G,F_1)$ satisfying that two closest special vertices has distance $\frac{1}{e}$. Then according to the results of Landvogt in \cite{lan2000} about functoriality of the Bruhat-Tits building, the embedding $\B(G,F)\hookrightarrow \B(G,F_1)$ is an isometry after a suitable normalization of the metric on $\B(G,F_1)$. And our convention on the metric is compatible, thus no need to renormalize the metric again.
 
 Suppose $A_1$ corresponds to the maximal split torus $T_1$ and $A_2$ corresponds to $T_2$. 
Because $z$ is an interior point of $A_1 \cap A_2$, there exists a real number $\varepsilon > 0$ such that \[D(z,\varepsilon) \cap A_1=D(z,\varepsilon) \cap A_2,\] here $D(z,\varepsilon)$ is the open ball centered at $z$ with radius $\varepsilon$.

Now suppose the point $x$ comes from  $p$-adic fields, in other words, there exists a finite extension $F_1|F$ such that $x \in X(T_1,\cL)^s(F_1)$. Because our initial construction of $X(T_1,\cL)^{s}$ and its quotient $X(T_1,\cL)^s/T_1$ is purely algebraic over $O_F$, use their geometric properties, we can replace $F_1$ by a finite extension, such that \[x=t.x_0\] with $t\in T(F_1)$ and $x_0 \in X(T_1,\cL)^s(O_{F_1})$.  Then by the definition of the apartment retraction map, the point $z$ is a special vertex inside $\B(G,F_1)$. Here we use base change to identify $\B(G,F)$ with a subspace of $\B(G,F_1)$. By our choose of metric, this is an isometry. Thus inside $\B(G,F_1)$ we still have \[D(z,\varepsilon) \cap A_1=D(z,\varepsilon) \cap A_2.\]

Take a totally ramified extension $F_2|F_1$ with large enough  ramification index $e$ such that we have \[e> \frac{1}{\varepsilon}.\] By the base change functoriality again, we embed $\B(G,F_1)$ into $\B(G,F_2)$. We can define the apartment retraction map in the same way as over $F$. And they are compatible. Thus we can argue over $F_2$.

Now $z$ is a special vertex for the apartment $A_1$, so we can take it as the (new) origin and perform the apartment retraction $r_{T_1,z}$. Then for this new integral model of $G$ and thus $G/P$ etc, the point $x$ is an integral point for $X(T_1,\cL)^{s}$. We will also use $z$ as the origin for $A_2$ and consider the apartment retraction $r_{T_2,z}$. It is sufficient to show that $x$ is also an integral point for $X(T_2,\cL)^s$, which is equivalent to require the reduction $\overline{x}$  lies in the semistable locus \[X(\overline{T_2},\overline{\cL})^s\] over the residue field.

Let the distance between two closest special vertices on $\B(G,F_2)$ be $\delta$. By our assumption on $F_2$, we know that $\delta < \varepsilon$. Let $\overline{D}(z, \delta)$ be the closed ball centered at $z$ with radius $\delta$. We have the following observation:

\textbf{Claim}: the small neighbor $\overline{D}(z, \delta) \cap A_1$  will \textbf{determine} the  reduction of the maximal split torus $\overline{T_1}$ over the residue field.

Now we show this claim. Recall $G=\PGL_n$. Let $V$ be a $n$-dimensional $F_2$-vector space so that the building $\B(G, F_2)$ can be identified with equivalent class of norms on $V$ (or the dual vector space $V^*$). Because $z$ is a special vertex, we can pick up a representative norm inside its equivalence and suppose it corresponds to a lattice $LC$. The apartment $A_1$ together with $z$ can determine a decomposition \[LC= \bigoplus_{i}O_{F_2}e_i\] (pick up an integral adapted basis). For each point in the Bruhat-Tits building, we always rescale the norm so that $|e_n|=1$, then we can pick up a  representative inside the equivalent class. Then intersection of the closed ball $\overline{D}(z,\delta)$ with $A_1$ has $1+2(n-1)$ special vertices. Besides $z$ itself, there are $2(n-1)$ special vertices inside this intersection. Under our convention, we look at the lattices corresponding to them, then there are $n-1$ special vertices corresponding to lattices $L_i\subset LC$ with \[L_i=\bigoplus_{j \neq i}\lan e_j\ran\bigoplus\lan\pi_{O_{F_2}}e_i\ran,\] here we consider the integral module and $i$ runs over positive integer smaller  than $n$, and other $n-1$ special vertices corresponds to lattices $\widetilde{L}_i \supset LC$ with \[\widetilde{L}_i=\bigoplus_{j \neq i}\lan e_j\ran\bigoplus\lan\frac{1}{\pi_{O_{F_2}}}e_i\ran.\]

Let $\widetilde{F_2}$ denote the  residue field and $\overline{LC}$ denote the reduction of $LC$, for the first kind of special vertices. Putting together, we get the following $\widetilde{F_2}$-linear map \[\psi: \overline{LC} \longrightarrow \bigoplus_{i}LC/L_i.\] Its kernel is exactly the line generated by $\overline{e_n}$. For the second kind of special vertices, for each $i$, consider the following map \[\psi_i:\overline{LC} \longrightarrow LC/(\pi_{O_{F_2}}\widetilde{L}_i), \] its kernel is the line generated by $\overline{e_i}$. 

In summary, the $2(n-1)$ closest special vertices around $z$ can determine the basis $\{\overline{e_1},...,\overline{e_n}\}$ for $\overline{LC}$ over the residue field, thus determine the reduction of the maximal split torus $\overline{T_1}$. So our claim is true.

 Then since $\overline{D}(z, \delta) \cap A_2=\overline{D}(z, \delta) \cap A_1$, we get $\overline{T_1}=\overline{T_2}=\overline{T}$. In particular, this shows that over the residue field, they define the same semistable locus: \[X(\overline{T_1},\overline{\cL})^s=X(\overline{T_2},\overline{\cL})^s.\] Therefore the reduction $\overline{x}$ will lie in the semistable locus, then $x$ is an integral point for $X(T_2,\cL)^s$. We finish the first step. 

For a general point $x$, suppose $r_{A_2}(x)=z_2 \neq z$. The space  $A_1 \cup A_2$ is Hausdorff, thus there exists a neighborhood $U_2$ for $z_2$ and a neighborhood $U$ for $z$ such that $U_2 \cap U=\emptyset$. Shrink  $U$ to make  it being contained in $A_1 \cap A_2$. Because both retraction maps $r_{A_1}$ and $r_{A_2}$ are continuous, there exists a neighborhood $U_3$ for $x$ such that $r_{A_1}(U_3)\subset U$ and $r_{A_2}(U_3)\subset U_2$. Because $p$-adic points are dense, there exists a point $x_3 \in U_3$ that comes from a $p$-adic field. Then $r_{A_1}(x_3)$ is an interior point of $A_1 \cap A_2$, and the first step tells us that \[r_{A_2}(x_3)=r_{A_1}(x_3),\] which is a contradiction. Therefore $r_{A_2}(x)=r_{A_1}(x)$. We are done.

 \end{proof}
 
 \begin{rem}
 \begin{enumerate}
 	\item 
In the above proof, 
 the first step indeed works for any field with discrete valuation. If the valuation is non-discrete, we can not talk about the closest pair of special vertices inside the Bruhat-Tits building. They can be arbitrarily closed to each other. 
 %On the other hand, the geometry of the  Bruhat-Tits is not so intuitive beyond $p$-adic fields case. That is why we use density of classical points to overcome this problem.  
 
 \item During the proof of the first step, if we suppose $A_2=g(A_1)$ with $g$ also stabilizes $z$, then $g$ is integral and its reduction $\overline{g}$ lies in the normalizer of the maximal split torus $\overline{T}$. This result about image of such reduction is optimal. Any element in this normalizer has an integral lift $g$ inside the normalizer of the integral torus, then  such $g$ stables $z$ and $A_1$, thus $z$ is obviously the interior point of the intersection. On the other hand, this also shows the difficulty of proving compatibility for different apartment retraction maps. If $z$ lies in the boundary of $A_1 \cap A_2$, then  we lose control of the  reduction $\overline{g}$ and it may not stabilize the semistable locus over the residue field.
 
 %\item Here we proved this  lemma for the Bruhat-Tits building of $\PGL_n$ based on the geometric intuition that small neighborhood inside the apartment can determine the residue maximal split torus. We conjecture it also holds for any split (or even unramified) groups. For example, for  split classical group $G$, through the natural representation on the vector space $V$, we  can embed its Bruhat-Tits building into the (reduced) Bruhat-Tits of $\GL(V)$, and get similar moduli description of the Bruhat-Tits building $\B(G,F)$ in terms of certain norms (or lattice chains) etc. Then the proof works in a similar way. The curious reader can find  such moduli description of (not necessary split) classical groups by Bruhat and Tits in \cite{bt84a} and \cite{bt87}. which has been further generalized by Wilson in \cite{wilson2010}, Cornut in \cite{cornut2020} and  Ziegler in \cite{ziegler2022} etc. Their work include unramified groups. Thus it is possible to try such ideas to  give a general proof.

  \end{enumerate}
 \end{rem}
 
 For any point $z$ in the Bruhat-Tits building $\B(G,F)$, consider a subset (``fiber over $z$'') $Y_{z}$ of $X^\Berk$: \[Y_z=\bigcap_{z \in A}r_{A}^{-1}(z),\] here $A$ runs over all apartments containing $z$. 
 
 \begin{cor}\label{cor fin intersection}
 The intersection $\bigcap_{z \in A}r_{A}^{-1}(z)$ is a finite intersection.
\end{cor}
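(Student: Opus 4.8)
The plan is to combine the local finiteness of the Bruhat-Tits building $\B(G,F)$ with the compatibility Lemma \ref{lemma compatibility}.

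First I would use that, since $F$ is a $p$-adic local field (hence has finite residue field), the polysimplicial building $\B(G,F)$ is locally finite: each face lies in the closure of only finitely many chambers. Fix $z\in\B(G,F)$, let $\sigma$ be the face with $z$ in its relative interior, and choose $\varepsilon>0$ small enough that the open ball $D(z,\varepsilon)$ lies in the open star of $\sigma$. Then for any apartment $A$ containing $z$, the trace $D(z,\varepsilon)\cap A$ is the union of the sets $D(z,\varepsilon)\cap\mathrm{int}(\tau)$ over the faces $\tau$ of $A$ with $\sigma\subseteq\tau$; since, by local finiteness, there are only finitely many faces of $\B(G,F)$ containing $\sigma$, the collection of subsets $\{\,D(z,\varepsilon)\cap A : A \text{ an apartment},\ z\in A\,\}$ is finite. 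Let $S_1,\dots,S_k$ be the distinct subsets of $D(z,\varepsilon)$ occurring in this way, and for each $j$ fix an apartment $A_j$ with $z\in A_j$ and $D(z,\varepsilon)\cap A_j=S_j$.

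Next I would show $Y_z=\bigcap_{j=1}^k r_{A_j}^{-1}(z)$; the inclusion ``$\subseteq$'' is immediate, so only ``$\supseteq$'' requires an argument. Let $A$ be an arbitrary apartment with $z\in A$ and pick $j$ with $D(z,\varepsilon)\cap A=S_j=D(z,\varepsilon)\cap A_j$. Then inside the space $A\cup A_j$ the ball $D(z,\varepsilon)\cap(A\cup A_j)$ equals $S_j$, which is contained in $A\cap A_j$; hence $S_j$ is an open neighbourhood of $z$ in $A\cup A_j$ lying in $A\cap A_j$, so $z$ is an interior point of $A\cap A_j$ with respect to $A\cup A_j$. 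The compatibility Lemma \ref{lemma compatibility} then gives, for every $x\in X^s$, the equivalence $r_A(x)=z\Leftrightarrow r_{A_j}(x)=z$, i.e.\ $r_A^{-1}(z)=r_{A_j}^{-1}(z)$. Therefore any $x\in\bigcap_{j=1}^k r_{A_j}^{-1}(z)$ satisfies $r_A(x)=z$ for all apartments $A\ni z$, that is $x\in Y_z$, which proves the claim.

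The inputs from building theory here are standard: local finiteness of $\B(G,F)$ and the description of a small ball around $z$ in terms of the finitely many faces containing $\sigma$. The only genuine point is to organise the reduction so that Lemma \ref{lemma compatibility} is applied only to pairs $(A,A_j)$ that agree near $z$ --- precisely the situation in which its hypothesis that $z$ be an interior point of $A\cap A_j$ with respect to $A\cup A_j$ holds automatically --- and I expect this bookkeeping, rather than any new obstacle, to be the main thing to get right.
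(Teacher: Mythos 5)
Your proposal is correct and follows essentially the same route as the paper: the paper also uses local finiteness (via the finite star set $Z^*$ of faces at $z$) to sort apartments containing $z$ into finitely many classes according to their local trace near $z$, picks a representative $A_i$ for each class, and applies Lemma \ref{lemma compatibility} to conclude $r_A^{-1}(z)=r_{A_i}^{-1}(z)$. Your use of the small ball $D(z,\varepsilon)$ inside the open star just makes explicit the paper's claim that two apartments in the same class share an open neighbourhood of $z$ lying in the interior of their intersection.
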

\begin{proof}
 Indeed, we first define a subset $Z^*$ (the star set of $\{z\}$) of simplices of the simplicial complex $\B(G,F)$, \[Z^*=\{\bigtriangleup | \bigtriangleup \cap \{z\} \neq \phi\}.\] Because $\B(G,F)$ is locally compact, this is a finite set. (In practice we can also similarly define the star set for any compact subset of $\B(G,F)$. Such a notion also appeared in other works on Bruhat-Tits buildings, like  \cite{ss1997, Schneider, dat2006}.) Any apartment $A$ containing $z$ will also contain some simplex from $Z^*$ and thus determine a subset \[S_A\subset Z^*.\] Then such subset $S_A$ has only finite possibilities. Any two apartment $A$ and $B$ corresponding to the same subset $S_1$ will share a common open subset (only relies on $S_1$) $U_1$ containing $z$, thus $U_1$ is contained in the interior of $A \cap B$. In particular, for any such subset $S_i$, pick up a representative apartment $A_i$, then by Lemma \ref{lemma compatibility}, the intersection  $\bigcap_{z \in A}r_{A}^{-1}(z)$  is just the finite intersection \[\bigcap_{i}r_{A_i}^{-1}(z).\]
\end{proof} 
 We have the following result:
 \begin{prop}\label{prop fiber decomp}
 \begin{enumerate}
 	\item For each $z\in \B(G,F)$,  $Y_{z}$ is a finite union of affinoid subspaces of $X^\Berk$.
 	\item 
$Y_{z} \subset X^{s}$.
 
 \item $X^{s}=\bigcup Y_{z}$.  
\end{enumerate}
\end{prop}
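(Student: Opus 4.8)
The three assertions build on the apartment retraction maps $r_{A}$, the compatibility lemma (Lemma \ref{lemma compatibility}), and its corollary on finite intersections (Corollary \ref{cor fin intersection}). I will treat them in the order (1), (2), (3), since each uses the previous ones.

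For (1), recall that by Corollary \ref{cor fin intersection} we may write $Y_z=\bigcap_{i=1}^{N} r_{A_i}^{-1}(z)$ as a \emph{finite} intersection over representative apartments $A_1,\dots,A_N$ containing $z$. Each $A_i$ corresponds to a maximal split torus $T_i$, and after translating $z$ to a special vertex (using Proposition \ref{prop properties retract torus}, which tells us $r_{T_i,o}$ is independent of the chosen special vertex and is $G(F)$-equivariant), we have $r_{T_i,z}^{-1}(z)=\wh{X(T_i,\cL)^s}^\Berk_\eta$, which was observed to be a finite union of affinoid subspaces of $X^\Berk$ (it is covered by the affinoids $X_{f_j}$ attached to an integral basis $\{f_1,\dots,f_m\}$ of $H^0(X,\cL)^{T_i}$, after possibly replacing $\cL$ by a power). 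A finite intersection of finite unions of affinoids is again a finite union of affinoids (intersections of affinoid subdomains of a Berkovich space are affinoid), so $Y_z$ has the asserted form. The only mild subtlety is that the finitely many apartments $A_i$ need not share a common special vertex, so one translates each $r_{A_i}$-fiber separately and then intersects; this is harmless because we are only taking a set-theoretic intersection inside the ambient $X^\Berk$.

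For (2), I will show $Y_z\subset X(T,\cL)^{s,\Berk}$ for \emph{every} maximal split torus $T$, and then invoke Proposition \ref{prop GIT ss and s}, which gives $X^s=\bigcap_T X(T,\cL)^{s,\Berk}$. Fix $T$ with apartment $A_T$. If $z\in A_T$, then $Y_z\subset r_{A_T}^{-1}(z)\subset X(T,\cL)^{s,\Berk}$ directly, since the apartment retraction $r_{A_T}=r_{T}$ is by construction defined on $X(T,\cL)^{s,\Berk}$. If $z\notin A_T$, one uses the building-theoretic fact that there exists an apartment $A$ containing both $z$ and (a suitable sub-apartment neighbourhood inside) $A_T$, more precisely: any point $z$ and any apartment $A_T$ lie in a common apartment $A$ — this is a standard axiom of (the affine building of) $\B(G,F)$. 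Then $Y_z\subset r_A^{-1}(z)\subset X(T_A,\cL)^{s,\Berk}$ where $T_A$ is the torus of $A$; but $A$ and $A_T$ share a chamber, and since the stable locus condition "$x$ is stable for the $T_A$-action" together with stability for all tori conjugate via the elements fixing that chamber forces stability for $T$, we conclude $Y_z\subset X(T,\cL)^{s,\Berk}$. Running over all $T$ gives $Y_z\subset X^s$. (Alternatively, and perhaps cleaner: pick $x\in Y_z$ realized over a large enough field $K$; the image of $Y_z$ under every $r_A$ through $z$ being $\{z\}$ means $x$ is integral with respect to the model attached to $z$ in every apartment through $z$, hence by Lemma 1.2 of \cite{pv1992} the origin lies in $\mathrm{Conv}(S_x)$ for every $T$, which is precisely semistability = stability under our hypothesis $(d,n)=1$.)

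For (3), the inclusion $\bigcup_z Y_z\subset X^s$ is immediate from (2). For the reverse inclusion, take $x\in X^s$. For each apartment $A$ we have $r_A(x)\in A\subset \B(G,F)$, so it suffices to show that the points $\{r_A(x)\}_A$, as $A$ ranges over all apartments, are all equal to a single point $z\in\B(G,F)$, for then $x\in Y_z$. Fix one apartment $A_0$ and set $z:=r_{A_0}(x)$. For any other apartment $A$, I claim $r_A(x)=z$ as well. If $z$ happens to be an interior point of $A_0\cap A$ this is exactly Lemma \ref{lemma compatibility}. In general, one connects $A_0$ to $A$ by a gallery of apartments $A_0,A_1,\dots,A_m=A$ such that consecutive ones $A_j,A_{j+1}$ share a chamber whose interior contains a point close to where the retractions land; applying Lemma \ref{lemma compatibility} along this chain (together with continuity of each $r_{A_j}$, Proposition \ref{prop continuous retract torus}, to locate an interior point of the intersection near $r_{A_j}(x)$) propagates the equality $r_{A_j}(x)=z$ step by step. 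The main obstacle here — and the place requiring care — is precisely this propagation: Lemma \ref{lemma compatibility} only directly compares $r_{A_1}$ and $r_{A_2}$ when the common value is an \emph{interior} point of $A_1\cap A_2$, so one must argue that it is always possible to route between any two apartments through intermediate apartments whose pairwise intersections are large enough (contain a chamber, hence have non-empty interior around the relevant point). This is a combinatorial fact about the building $\B(\PGL_n,F)$ (convexity of apartments, existence of apartments containing any two facets), which I would extract from the Bruhat-Tits axioms; modulo this, (3) follows and completes the proof. I expect (3), via this apartment-to-apartment propagation, to be the genuinely delicate step, whereas (1) and (2) are essentially bookkeeping on top of the already-established GIT statements and Corollary \ref{cor fin intersection}.
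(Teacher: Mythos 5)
Your part (1) is essentially the paper's argument: Corollary \ref{cor fin intersection} reduces $Y_z$ to a finite intersection of fibers $r_{A_i}^{-1}(z)$, each of which is a finite union of affinoids by the construction of the apartment retractions, and finite intersections of such sets are again of this form. But for (2) and (3) the paper does not argue along the lines you propose at all: it simply quotes \cite{vos2000} (Corollary 3.10, Theorem 3.11, Proposition 3.18), and the reason is that a direct argument of the kind you sketch breaks down at precisely the points you gloss over.

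In (2), the ``standard axiom'' you invoke --- that a point $z$ and an entire apartment $A_T$ lie in a common apartment --- is false: apartments are maximal flats of a fixed dimension, so no apartment properly contains $A_T$, and if $z\notin A_T$ there is no apartment containing both; the building axioms only provide a common apartment for two facets (or two points). The ensuing claim that stability for $T_A$ together with stability for tori ``conjugate via elements fixing a shared chamber'' forces stability for $T$ is unjustified, and your parenthetical alternative only gives $Y_z\subset X(T,\cL)^{s,\Berk}$ for tori $T$ whose apartment passes through $z$, whereas $X^s=\bigcap_T X(T,\cL)^{s,\Berk}$ runs over \emph{all} maximal split tori --- closing that gap is exactly the nontrivial content of (2). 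In (3), the intermediate claim that $r_A(x)$ equals one point $z$ as $A$ ranges over \emph{all} apartments is false on its face, since $r_A(x)\in A$ and $z$ need not lie in $A$; what has to be shown is only that $r_A(x)=z$ for every apartment $A$ containing $z=r_{A_0}(x)$. Even for those, the gallery-propagation cannot be run on Lemma \ref{lemma compatibility} alone: the hypothesis that $z$ be interior to $A_1\cap A_2$ relative to $A_1\cup A_2$ forces $A_1$ and $A_2$ to have the same germ at $z$, so any chain of apartments in which consecutive members satisfy this hypothesis at $z$ keeps the germ at $z$ constant and can never reach an apartment through $z$ with a different germ --- which is the essential case, and is the boundary phenomenon explicitly flagged in the remark following Lemma \ref{lemma compatibility} (loss of control of the reduction when $z$ lies on the boundary of the intersection). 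To repair the proof you would need either Voskuil's GIT results, as the paper uses, or a genuinely new argument treating apartments whose intersection has $z$ on its boundary.
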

\begin{proof}
The first statement follows from Corollary \ref{cor fin intersection} that $Y_z=\bigcap_{z \in A}r_{A}^{-1}(z)$ is a finite intersection, and each $r_{A}^{-1}(z)$ is a finite union of affinoids of $X^\Berk$ by constructions in subsection \ref{subsec retration appartment}. 

The assertions (2) and (3) follow from \cite{vos2000} Corollary 3.10, Theorem 3.11 and Proposition 3.18.
\end{proof}
 %Both are non-trivial and relies on nice properties of the Bruhat-Tits building like locally compactness. Here $z$ runs over all points of $\B(G,F)$. His proof is similar to the proof of theorem 3.6 in the previous paper \cite{pv1992}. In fact, in that paper instead of  points $z$, they defined a similar notion $Y_{\sigma}$ for each simplex $\sigma$ (also define it as the intersection). And they also got a decomposition over all simplex and each $Y_{\sigma}$ is also a finite union of affinoids. Although the picture is very similar, Voskuil's decomposition is finer than their decomposition over simplex. Because even restrict to $Y_{\sigma}$, we only know each reasonable retraction will send points into this simplex, we don't know whether all possible maps are the same.
 
 By Lemma \ref{lemma compatibility}, for $z_1\neq z_2$ in an apartment $A_T$, we have $r_{A_T}^{-1}(z_1)\cap r_{A_T}^{-1}(z_2)=\emptyset$. Therefore in Proposition \ref{prop fiber decomp} (2) we actually get a disjoint union \[X^s=\coprod_{z\in \B(G,F)}Y_z.\] 
 Thanks to this decomposition, the retraction map exists obviously now. We just send points in $Y_{z}$ to the point $z \in \B(G,F)$ and get the retraction map \[r: X^{s} \longrightarrow \B(G,F).\] Then $Y_z$ is exactly the  fiber $r^{-1}(z)$.  Let $x\in X^{s}$ with $r(x)=z$, i.e. $x\in Y_z$, then for any apartment $A$ containing $z$, we have $r(x)=r_A(x)$.
Because of the $G(F)$-equivariant property of the apartment retraction map, this retraction map is also $G(F)$-equivariant. And it is  in fact also continuous.  
 \begin{thm}\label{thm retract continuous}

 The retraction map $r: X^{s} \longrightarrow \B(G,F)$ is continuous.

 \end{thm}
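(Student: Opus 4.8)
The plan is to verify continuity of $r$ locally, working point by point. Fix $x_0\in X^s$ and set $z_0=r(x_0)\in\B(G,F)$, so $x_0\in Y_{z_0}$. Since $\B(G,F)$ is locally compact and Hausdorff, it suffices to produce, for every open neighborhood $U$ of $z_0$ in $\B(G,F)$, an open neighborhood $W$ of $x_0$ in $X^s$ with $r(W)\subset U$. First I would replace $U$ by a smaller open neighborhood contained in the union of the (finitely many, by local compactness) closed simplices meeting the star of $z_0$, so that $U$ is covered by the apartments $A_1,\dots,A_k$ passing through $z_0$ that were used in Corollary \ref{cor fin intersection}: concretely, $U\cap\B(G,F)$ is controlled by the maps $r_{A_1},\dots,r_{A_k}$ on the fiber structure. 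The point is that for any $z$ close to $z_0$, the fiber $Y_z$ equals $\bigcap_{i}r_{A_i}^{-1}(z)$ for these same finitely many apartments, by the compatibility Lemma \ref{lemma compatibility} applied near $z_0$.

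Next I would use the continuity of each apartment retraction $r_{A_i}\colon X(T_i,\cL)^{s,\Berk}\to A_i$ (Proposition \ref{prop continuous retract torus}) together with the fact that $X^s=\bigcap_{g}X(gT g^{-1},\cL)^{s,\Berk}$ is open in $X^\Berk$ (Proposition \ref{prop GIT ss and s}) and in particular open in each $X(T_i,\cL)^{s,\Berk}$. Thus each restriction $r_{A_i}|_{X^s}\colon X^s\to A_i$ is continuous. For $x\in X^s$ with $r(x)$ near $z_0$ we have $r(x)=r_{A_i}(x)$ for all $i$; hence on a suitable neighborhood of $x_0$ the map $r$ agrees with $r_{A_1}|_{X^s}$, which is continuous into $A_1$, and whose image near $z_0$ lands in $A_1\cap A_2\cap\cdots\cap A_k$, a neighborhood of $z_0$ inside $\B(G,F)$. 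Choosing $W=(r_{A_1}|_{X^s})^{-1}(V)$ for a small enough open $V\subset A_1$ with $V\subset U$ and $V$ contained in the common region of the $A_i$ then gives $r(W)\subset U$. The verification that such a $V$ exists uses again that $z_0$ is an interior point of $\bigcap_i A_i$ with respect to $\bigcup_i A_i$, which holds by construction of the star set.

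The one subtlety, and the step I expect to be the main obstacle, is matching the topology on $\B(G,F)$ (the simplicial/metric topology) with the topology of the $A_i$'s near a point that may lie on a lower-dimensional face: I must check that a set which is open in every apartment $A_i$ through $z_0$ and whose intersections are compatible actually contains a genuine neighborhood of $z_0$ in $\B(G,F)$, i.e. that the finite family $\{A_i\}$ ``exhausts'' a neighborhood of $z_0$. This is precisely where Corollary \ref{cor fin intersection} and the star-set argument enter: local compactness guarantees only finitely many simplices meet $z_0$, every apartment through $z_0$ contains one of finitely many local configurations $S_A\subset Z^*$, and any two apartments with the same configuration share a common open neighborhood $U_1$ of $z_0$; so picking one representative $A_i$ per configuration, $\bigcap_i A_i$ does contain a neighborhood of $z_0$ and the restriction of $r$ to the preimage of that neighborhood is computed by a single continuous $r_{A_i}$. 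Assembling these observations and invoking Lemma \ref{lem top} if one prefers to first establish continuity on the dense set of classical points and then extend (using that $\B(G,F)$ is regular Hausdorff) completes the proof.
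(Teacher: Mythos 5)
Your proposal gathers the right ingredients (continuity of the apartment retractions $r_{A_i}$, finiteness of the local configurations via the star set, and the compatibility Lemma \ref{lemma compatibility}), but the way you wire them together contains a genuine error. You claim that for $x$ with $r(x)$ near $z_0$ one has $r(x)=r_{A_i}(x)$ for \emph{all} $i$, hence that $r$ agrees with a single apartment retraction $r_{A_1}$ on a whole neighborhood of $x_0$, and that $\bigcap_i A_i$ contains a neighborhood of $z_0$ in $\B(G,F)$. None of these statements holds: a neighborhood of $z_0$ in the building meets all chambers adjacent to $z_0$ and is not contained in any single apartment, let alone in the intersection of the finitely many representatives $A_i$; and the identity $r(y)=r_A(y)$ is only available for apartments $A$ that actually \emph{contain} $r(y)$, whereas $r_{A_i}(y)\in A_i$ by definition, so for nearby $y$ whose image $r(y)$ leaves $A_i$ the equality $r(y)=r_{A_i}(y)$ is impossible. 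For the same reason your preliminary assertion that $Y_z=\bigcap_i r_{A_i}^{-1}(z)$ for all $z$ close to $z_0$, with the $A_i$ the apartments through $z_0$, is false: the right-hand side is empty as soon as $z\notin A_i$ for some $i$, and in any case the fiber $Y_z$ is governed by apartments through $z$, not through $z_0$.

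The argument has to be pointwise in the nearby points $y$, not a local identification of $r$ with one fixed $r_{A_i}$. Shrink the given neighborhood $U$ of $z_0$ so that $U\cap A_i\subset U_i$ for each representative $A_i$, and use continuity of each $r_{A_i}$ (Proposition \ref{prop continuous retract torus}) to find a single neighborhood $\widetilde{U}$ of $x_0$ in $X^s$ with $r_{A_i}(\widetilde{U})\subset U\cap A_i$ for every $i$. Then, for each $y\in\widetilde{U}$ with $w=r(y)$, choose an apartment $A$ containing both $z_0$ and $w$ (it exists by the building axioms, since $z_0$ and $w$ lie in two simplices which are contained in a common apartment); $A$ realizes one of the finitely many star configurations, say that of $A_1$, so $U_1$ lies in the interior of $A\cap A_1$. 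Since $r_{A_1}(y)\in U\cap A_1\subset U_1$, Lemma \ref{lemma compatibility} yields $r_A(y)=r_{A_1}(y)$, while $w\in A$ forces $r_A(y)=w$; hence $w=r_{A_1}(y)\in U$. The apartment $A$, and therefore the index $i$ for which $r(y)=r_{A_i}(y)$, depends on $y$ — this dependence is precisely what your proposal suppresses, and it is the whole point of invoking the compatibility lemma here. (The appeal to Lemma \ref{lem top} and density of classical points is not needed at this stage; it enters only in the proof of continuity of the $r_{A_i}$ themselves.)
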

 
 \begin{proof}
 
 For any $x \in X^{s}$, suppose $r(x)=z \in \B(G,F)$, then $x \in Y_{z}$. 
 
 Consider the star set $Z^*$ of $\{z\}$ again, and suppose we have chosen the representatives of apartments $A_i$ corresponding to subsets $S_i$ of $Z^*$. Then for each $S_i$, there exists a set $U_i$ containing $z$ such that for any apartment $A$ containing $z$, if the corresponding subset is $S_i$, then $A$ contains $U_i$ and $U_i$ is open in $A$.
 
 For any neighborhood $U$ of $z$, shrink $U$ if necessary so that $U \cap A_i$ is contained in $U_i$ for any $i$. Now for any $i$, consider the apartment retraction map \[r_{A_i}: X(T_i,\cL)^{s,\Berk}\longrightarrow A_i.\] Since this map is continuous by Proposition \ref{prop continuous retract torus} and $X^{s} \subset X(T_i,\cL)^{s,\Berk}$ with $r_{A_i}(x)=z$, there exists a neighborhood for $x$ inside $X^{s}$ such that $r_{A_i}$ will map this neighborhood into $U \cap A_i$. 
 
 Let $\widetilde{U}$ denote the intersection of these neighborhoods of $x$. For any point $y \in \widetilde{U}$, suppose $r(y)=w$. Then there exists an apartment $A$ containing $w$ and $z$. To see this, just notice that $z$ (resp. $w$) is contained in a simplex $\Delta_1$ (resp. $\Delta_2$),  and there exists an apartment passing through $\Delta_1$ and $\Delta_2$.
 
 Suppose this apartment $A$ corresponds to the subset $S_1$ of $Z^*$. Then $U_1$ is contained in the interior of $A \cap A_1$. Because $r_{A_1}(\widetilde{U}) \subset U \cap A_1 \subset U_1$, the  point $r_{A_1}(y)$ is an interior point of $A \cap A_1$, therefore by Lemma \ref{lemma compatibility} we have \[r_{A}(y)=r_{A_1}(y).\] On the other hand, we have $w=r(y)=r_A(y)$. Thus $w=r_{A_1}(y)\in U\cap A_1\subset U$. Therefore, we have shown that $r(\widetilde{U})\subset U$.
 This theorem holds. 
 
 \end{proof}

\begin{rem}
In \cite{pv1992}, a formal integral model over $O_F$ of $X^s$ was constructed, based on a gluing procedure which is related to simplicial structure of $\B(G,F)$. We will not need this integral model, as in general it  says nothing about $X^{b_0}$. 
\end{rem}
  
 Next we justify the name ``\emph{retraction}''.  Recall that by Theorem \ref{thm BT vs p-adic period}  we have the Berkovich map (which is an embedding in this case) \[\theta: \B(G,F)\hookrightarrow X^{b_0}\] and the inclusion $X^{b_0}\subset X^s$. We still denote the restriction of $r$ to $X^{b_0}$ by $r: X^{b_0}\ra \B(G,F)$. 
 \begin{thm}
 \label{retract}
 The composition of the Berkovich map $\theta: \B(G,F)\hookrightarrow X^{b_0}$ and the retraction map $r: X^{b_0}\ra \B(G,F)$ is an identity for the Bruhat-Tits building $\B(G,F)$.

 \end{thm}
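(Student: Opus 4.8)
The goal is to show $r\circ\theta=\mathrm{Id}_{\B(G,F)}$. Since $\theta(\B(G,F))=\B_t(G,F)\subset X^{b_0}\subset X^{ss}=X^s$ by Theorem \ref{thm BT vs p-adic period} together with Propositions \ref{prop HN} and \ref{prop ss equals s}, and since $X^s=\coprod_{z\in\B(G,F)}Y_z$ with $r$ defined by contracting each fibre $Y_z$ to $z$ (Proposition \ref{prop fiber decomp} and Lemma \ref{lemma compatibility}), it suffices to prove $\theta(z)\in Y_z$ for every $z\in\B(G,F)$. Unwinding the definition $Y_z=\bigcap_{z\in A}r_A^{-1}(z)$, this means exactly: $r_{A_T}(\theta(z))=z$ for every maximal split torus $T$ of $G$ with $z\in A_T$. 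The plan is to establish this first for $z$ ranging over a single, conveniently chosen apartment, using the explicit formula for $\theta$ on an apartment, and then to propagate it to all $z$ and all apartments by $G(F)$-equivariance.

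\emph{Step 1: one apartment.} First I would fix a maximal split torus $T_0\subset P=P_\mu$, a special vertex $o$ of $A_0:=A_{T_0}$, and use $o$ to identify $A_0$ with $V(T_0)=\Hom(X^*(T_0),\R)$. By Proposition \ref{explicit}, $\theta$ maps $A_0$ into the open Bruhat cell $\Omega(T_0,P)^\Berk\cong\A^{N,\Berk}$ (with $N=\dim X$), sending $o$ to the Gauss point and a general $u\in V(T_0)$ to the twisted Gauss norm displayed there. The crucial claim is $r_{T_0,o}(\theta(o))=o$, i.e.\ that the Gauss point is an \emph{integral} point for $X(T_0,\cL)^s$. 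Indeed, its reduction is the generic point of the special fibre $X_{\F_q}$ (it is the Gauss point of the $O_F$-model $\Omega(T_0,P)\cong\A^N$, which is dense open in the proper, geometrically integral $X$), and $X(\ov{T_0},\ov{\cL})^s$ is a \emph{non-empty} open subscheme of $X_{\F_q}$, hence contains that generic point; thus $\theta(o)\in\wh{X(T_0,\cL)^s}^\Berk_\eta=r_{T_0,o}^{-1}(o)$. Non-emptiness of $X(\ov{T_0},\ov{\cL})^s$ is the one place the hypothesis $(d,n)=1$ intervenes: then $X^s=X^{ss}$ (Proposition \ref{prop ss equals s}), so the geometric quotient $Z=X(T_0,\cL)^s/T_0$ is projective over $O_F$; its generic fibre is non-empty (as $\emptyset\neq X^s\subset X(T_0,\cL)^{s,\Berk}$), so the image of the proper morphism $Z\to\Spec O_F$ is closed and contains the generic point, hence is all of $\Spec O_F$; therefore $Z_{\F_q}\neq\emptyset$, and surjectivity of $X(T_0,\cL)^s\to Z$ gives $X(\ov{T_0},\ov{\cL})^s\neq\emptyset$.

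\emph{Step 2: from $o$ to all of $A_0$.} Next I would upgrade $r_{T_0,o}(\theta(o))=o$ to $r_{T_0,o}(\theta(u))=u$ for every $u\in A_0$, using the transformation law $r_{T_0,o}(t\cdot x)=-v_{T_0}(t)+r_{T_0,o}(x)$. Over a non-archimedean extension $K\supset\ov F$ with $v(K^\times)=\R$ (for instance a maximally complete extension) pick $t_u\in T_0(K)$ with $v_{T_0}(t_u)=-u$; the explicit norm in Proposition \ref{explicit} identifies $\theta(u)$ with $t_u\cdot\theta(o)$ inside $X(T_0,\cL)^{s,\Berk}$, whence $r_{T_0,o}(\theta(u))=-v_{T_0}(t_u)+o=u$. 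Alternatively, one invokes continuity of $r_{T_0,o}$ (Proposition \ref{prop continuous retract torus}) together with density of rational points in $A_0$. Either way $r_{A_0}(\theta(u))=u$ on $A_0$.

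\emph{Step 3: equivariant bootstrap.} Finally, for an arbitrary $z\in\B(G,F)$ and an arbitrary maximal split torus $T$ with $z\in A_T$, transitivity of the $G(F)$-action on maximal split tori gives $g\in G(F)$ with $gT_0g^{-1}=T$, so $g^{-1}z\in A_0$; combining $G(F)$-equivariance of $\theta$ with Proposition \ref{prop properties retract torus}(1) and Step 2,
\[
r_{A_T}(\theta(z))=r_{A_T}\!\big(g\cdot\theta(g^{-1}z)\big)=g\cdot r_{A_0}\!\big(\theta(g^{-1}z)\big)=g\cdot(g^{-1}z)=z .
\]
Hence $\theta(z)\in\bigcap_{z\in A}r_A^{-1}(z)=Y_z$, so $r(\theta(z))=z$ for all $z$, which is the assertion. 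The only genuinely non-formal point, and the place I expect to have to be careful, is the integrality statement in Step 1 — concretely, the non-emptiness of the GIT-stable locus $X(\ov{T_0},\ov{\cL})^s$ over the residue field — since everything else (the explicit twisting formula, the transformation law, and the equivariant bootstrap) then follows mechanically from the results already in place.
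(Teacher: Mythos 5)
Your proof is correct and follows essentially the same route as the paper's: integrality of the Gauss point $\theta(o)$ for $X(T,\cL)^s$ via its reduction being the generic point of the special fibre, then torus-equivariance of $\theta|_A$ and $r_A$ over a field $K$ with $v(K^*)=\R$ to force the identity on an apartment, and finally the conclusion for every apartment containing $z$ so that $\theta(z)\in Y_z$. The only differences are cosmetic: the paper runs the Gauss-point argument directly in each apartment containing $z$ (choosing a special vertex there) instead of bootstrapping from one fixed apartment by $G(F)$-equivariance via Proposition \ref{prop properties retract torus}, and your explicit non-emptiness argument for the stable locus on the special fibre spells out a point the paper leaves implicit.
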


 \begin{proof}
 
 For any point $z \in \B(G,F)$, suppose $x=\theta(z) \in X^{b_0}$. We need to show that \[x \in Y_{z}=r^{-1}(z).\] 
 %Our Theorem \ref{weak} shows that $x$ lies in the weakly admissible locus while  the weakly admissible locus coincides with the semistable locus (see \cite{totaro}). So $x$ lies in the semistable locus and $r(x)=z$. Alternately, we can also use Voskuil's result that $Y_z$ is already contained in the semistable locus.
 
 Take an apartment $A$ containing $z$ and pick up a special vertex $o$ for $A$. 
 Let the test field $K$ be large enough so that $v(K^*)=\R$ and $\theta(o)\in X^{b_0}(K)$.
 Using this special vertex $o$ as the origin, we get an integral model for $G$ etc and we have the continuous map $r_{A}: X(T,\cL)^{s,\Berk}\ra A$ by Proposition \ref{prop continuous retract torus}. We first show that \[r_A(\theta(o))=o.\] 
 Then, we only need to show that $\theta(o)\in X(T,\cL)^s(O_K)$. Through Proposition \ref{explicit} computing the Berkovich map, the point $\theta(o)$ is the standard Gauss point for the open Bruhat cell $\Omega(T,P)$. And its reduction $\overline{\theta(o)}$ corresponds to the $\widetilde{K}$-point of $\overline{\Omega(T,P)}$ that is an injection of the polynomial ring. So it is the generic point of the open Bruhat cell over  $\F_q$, then it  is also the generic point of the flag variety $\overline{X}$ over $\F_q$.
   
On the other hand, recall that the stable locus $X(T,\cL)^s$ is a union of non-vanishing locus $X_{f_i}$. Therefore $\theta(o)$ lies in the integral stable locus if and only if $\overline{\theta(o)}$ lies in the stable locus over the residue field. But over such a field, the stable locus is cutting off a family of proper Zariski closed subset. In particular, the generic point of the whole flag variety will certainly lie in this stable locus. Therefore $r_A(\theta(o))=o$.
 
 Note  that both maps $\theta|_A: A\rightarrow X(T,\cL)^s(K)$ and $r_A: X(T,\cL)^s(K)\rightarrow A$ are $T(K)$-equivariant (although $T(K)$- can not act on the whole Bruhat-Tits building). Because $v(K^*)=\R$, $T(K)$ acts on $A$ transitively, thus the first map is well defined, and each point in the apartment will be sent to a $K$-point. Combining with $r_A(\theta(o))=o$, the $T(K)$-equivariant property forces the map $r_A\circ\theta$ to be an identity. In particular, we have $r_A(x)=z$.  
 Since $A$ is an arbitrary apartment containing $z$, we get $r(x)=z$.

 \end{proof}

 \begin{rem}
 	There is a significant difference between the Berkovich map $\theta$ and the retraction map $r$. By \cite{rtw2010}, we indeed can construct the Berkovich map $\theta$  for any non-archimedean field $k$ satisfying the functoriality assumption. In particular, if $G$ is split, then any non-archimedean field $k$ works. But for the retraction map $r$, we need $k$ to be locally compact. This fact guarantees that the Bruhat-Tits building is locally compact, which plays an essential role in the construction of $r$ as above.
 	
 	Indeed, the basic example of Drinfeld space (see the next subsection) already shows this difference. We can always use the interpretation of (semi)norms to define a map from the Drinfeld space to equivalent classes of norms on the dual vector space $V^*$. But for general $k$, the later space is \textbf{not} the Bruhat-Tits building of $\PGL_n$. In terms of norms, the Bruhat-Tits building corresponds to those norms on $V^*$ with an adapted basis (thus inside an apartment), but for general $k$, there exist norms on $V^*$ without such a basis.
 	
 \end{rem}
 
%\begin{rem}
%As observed by Rapoport for Drinfeld case in \cite{rap1997}, the existence of a $G(F)$-equivariant map to the Bruhat-Tits building will directly implies that the stabilizer of each point is a compact subgroup.  Moreover, combining with the Berkovich embedding, we can determine the stabilizer for each point in $\theta(\B(G,F))$.
%\end{rem}

\subsection{The case $d=1$}

Our retraction map is a generalization of  the Drinfeld retraction map \[r^{Dr}: \Omega^n\lra \B(G,F)\] for Drinfeld space \cite{Dri74} in the case $d=1$, where one can give a very simple definition by restriction of norms. In this case, we have $X=\mathbb{P}^{n-1}$ and \[\Omega^n=X^{b_0}=X^s=X^{ss},\]
which is the complement of the union of all $F$-rational hyperplanes in $\mathbb{P}^{n-1}$.
 The embedding $\theta: \B(G,F)\hookrightarrow \Omega^n$ was previous constructed by Berkovich in \cite{Ber90} (see also \cite{Ber95})

 \begin{prop}\label{prop retract Drinfeld}
 For $d=1$,  the retraction map $r: \Omega^n\lra \B(G,F)$ coincides with the Drinfeld retraction $r^{Dr}$ in terms of norms.
 \end{prop}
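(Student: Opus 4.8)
The plan is to compare $r$ and $r^{Dr}$ apartment by apartment, reducing the statement to an explicit computation with norms together with the geometric invariant theory over $O_F$ that underlies the construction of the apartment retractions in subsection~\ref{subsec retration appartment}. Recall the classical picture for $d=1$ (see \cite{Dri74,Ber95} and \cite{DOR10} XI.3): $X=\Fl(G,\mu)=\mathbb{P}(V^{\ast})$ for an $n$-dimensional $F$-vector space $V$, the space $\Omega^n=X^{b_0}=X^s=X^{ss}$ is the complement of all $F$-rational hyperplanes, the building $\B(G,F)$ is the space of homothety classes of (splittable) norms on $V$, and a point $x\in\Omega^n(K)$ for a complete algebraically closed extension $K|F$ is a linear form $\phi_x\colon V_K\to K$, well defined up to $K^{\times}$, with $\phi_x(v)\neq 0$ for all $v\in V\setminus\{0\}$; then $r^{Dr}(x)$ is the homothety class of the norm $\|v\|_x:=|\phi_x(v)|$ on $V$, and $r^{Dr}$ is continuous and $G(F)$-equivariant. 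Since every point of the Berkovich space $\Omega^{n}$ is the image of such a $K$-point and both $r$ and $r^{Dr}$ are already defined on $\Omega^n$, it suffices to check $r(x)=r^{Dr}(x)$ for $x\in\Omega^n(K)$, with $K$ allowed to vary.

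First I would make the apartment retraction $r_{A,o}$ completely explicit when $d=1$. Fix a maximal split torus $T$, an $O_F$-basis $f_1,\dots,f_n$ of the lattice $\Lambda_0$ attached to a special vertex $o$ of $A=A_T$, and let $T$ be the corresponding diagonal torus. Then $H^0(X,\cL)=V$, the section $f_i$ is the $i$-th homogeneous coordinate of $\mathbb{P}(V^{\ast})$ and is a $T$-eigensection, and the weight criterion (\cite{pv1992} Lemma~1.2) applied over \emph{both} the generic and the special fibre of $\Spec\,O_F$ identifies $X(T,\cL)^s$ over $O_F$ with the open subscheme where all the $f_i$ are invertible. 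Hence $x$ lies in $X(T,\cL)^s(O_K)=r_{A,o}^{-1}(o)(K)$ exactly when, after normalizing $\phi_x$ so that $\sup_{v\in\Lambda_0}|\phi_x(v)|=1$, one has $|\phi_x(f_i)|=1$ for all $i$; and writing a general $x\in X(T,\cL)^s(K)$ as $x=t\cdot x'$ with $t\in T(K)$ and $x'\in X(T,\cL)^s(O_K)$ (possible after enlarging $K$, by the surjectivity recalled in subsection~\ref{subsec retration appartment}) we obtain $r_{A,o}(x)=-v_T(t)$ together with $\|f_i\|_x=|\phi_x(f_i)|=|t_i|$, where $t_i$ is the $i$-th component of $t$.

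The key step is then the identification of $r(x)$ for arbitrary $x\in\Omega^n(K)$. Put $z_0:=r^{Dr}(x)=[\|\cdot\|_x]\in\B(G,F)$ and let $A$ be any apartment containing $z_0$; it is the apartment $A_{T'}$ of some maximal split torus $T'$, and $z_0\in A_{T'}$ means precisely that $\|\cdot\|_x$ is split by a $T'$-eigenbasis $h_1,\dots,h_n$ of $V$, which we may take to be an $O_F$-basis of the lattice of a special vertex $o'\in A_{T'}$ (the apartment retraction being independent of $o'$ by Proposition~\ref{prop properties retract torus}(2)). Splitness identifies the $i$-th coordinate of $z_0$ relative to $o'$ with a fixed monotone function of $\|h_i\|_x$. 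After enlarging $K$, write $x=t\cdot x'$ with $t\in T'(K)$, $x'\in X(T',\cL)^s(O_K)$; the previous step gives $\|h_i\|_x=|t_i|$, so the coordinates of $z_0$ relative to $o'$ are those of $-v_{T'}(t)$, which is exactly $r_{A_{T'},o'}(x)$. Thus $r_A(x)=z_0$ for \emph{every} apartment $A$ containing $z_0$, so $x\in Y_{z_0}=r^{-1}(z_0)$ by the definition of the fibres $Y_z$ in Proposition~\ref{prop fiber decomp}, i.e. $r(x)=z_0=r^{Dr}(x)$; letting $x$ vary over all such $K$-points concludes the proof. (Alternatively one can first check $Y_o=(r^{Dr})^{-1}(o)$ for vertices $o$ using the compatibility lemma~\ref{lemma compatibility} and then extend by $G(F)$-equivariance, but the argument above handles all points uniformly.)

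The step I expect to be the main obstacle is the explicit description of $X(T,\cL)^s$ over the base $O_F$ in the second paragraph: one must verify that the Hilbert--Mumford weight criterion applies verbatim over the local ring $O_F$, so that a point is ``integrally stable'' precisely when all coordinates are units on the special fibre, and this is exactly where the hypothesis $d=1$ is used in an essential way, beyond the equality $X^s=X^{ss}$ available for all $(d,n)=1$. A secondary, bookkeeping-type difficulty is to keep the sign and normalization conventions consistent between the geometric quotient of subsection~\ref{subsec retration appartment}, Tits' coordinates on the apartment, and Berkovich's formula for $\theta$ in Proposition~\ref{explicit}; this is, however, already pinned down by the identity $r\circ\theta=\mathrm{id}$ proved in Theorem~\ref{retract}.
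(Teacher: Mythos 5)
Your proposal is correct and follows essentially the same route as the paper: reduce to apartments, use that for $d=1$ the integral $T$-stable locus consists exactly of points whose coordinates are units (so $\|h_i\|_{x'}=1$ for $x'\in X(T,\cL)^s(O_K)$), write $x=t\cdot x'$, and compare the resulting diagonal norms using that the torus eigenbasis splits the norm $r^{Dr}(x)$. The only difference is organizational and harmless: the paper first checks $r^{Dr}(r^{-1}(o))=o$ at a special vertex and then chooses a single apartment containing both $r(x)$ and $r^{Dr}(x)$, whereas you verify $r_A(x)=r^{Dr}(x)$ for every apartment $A$ containing $r^{Dr}(x)$ and conclude via the definition of the fibres $Y_z$; the key computation is identical.
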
 
 \begin{proof}

  Let $V$ denote the $n$-dimension $F$-vector space with identification $G=\PGL(V)$ and further identify the flag variety $X=\Fl(G,\mu)$ with the projective space $\mathbb{P}(V)$. Let $V^*$ denote the dual space. The Drinfeld half-space $\Omega^n$ is the semistable locus of $X^{\Berk}$ cutting out off those lines lying in an $F$-rational hyperplane. And the Drinfeld retraction $r^{Dr}$ is a $G(F)$-equivariant continuous map to equivalent classes  of norms on $V^*$. Here $G(F)$ acts on $V^*$ through the dual action and then acts on norms on it. We further identify the Bruhat-Tits building $\B(G, F)$ with the space of equivalent classes of norms on $V^*$.
  
   Let $x \in \Omega^n$ and take an apartment $A$ containing the point $r(x)$ and $r^{Dr}(x)$. Choose a special vertex $o$ inside this apartment and take it as the origin. Suppose the apartment $A$ corresponds to the maximal split torus $T$. 
  We first show the compatibility over $o$, i.e. $r^{Dr}(r^{-1}(o))=o$.
  
  First, we can also identify $\B(G,F)$ with the space of equivalent classes of norms on $V$. Then the origin $o$ together with the apartment $A$ determine an $O_{F}$-lattice (unique up to scalar) $LC$ equipped with an integral decomposition \[LC=\bigoplus_{i}\lan e_i\ran.\] And through the identification with norms on $V^*$, the dual basis $\{e_1^*,..,e_n^*\}$ for $V^*$ is an integral adapted basis for the apartment $A$ and $o$ is the standard Gauss norm, i.e. each vector $e_i^*$ is norm one. The basis $\{e_1,...,e_n\}$ for $V$ gives a (homogeneous) coordinate for $\mathbb{P}(V)$ and  use $o$ to upgrade everything into integral setting. For any point $y \in r^{-1}(o)$, there exists a non-archimedean algebraic closed field $K$ containing $F$ with $y$ being a $K$-point. Because $y \in r^{-1}(o)$, we can pick up a representative coordinate \[y=[y_1:...:y_n]\] with each $y_i \in O_K$ and the reduction $\overline{y}$ lies in the Drinfeld half-space over the residue field $\F_q$. Then each $\overline{y_i}$ is nonzero so we have $|y_i|=1$. Indeed, each point inside $X(T,\cL)^{s}(O_K)$ will already have such norm one result. For each nonzero vector $v=\sum a_{i}e_i^*$ with $a_i \in F$, its norm respect to the Drinfeld retract is \[r^{Dr}(y)(v)=|\sum_i a_i y_i|.\] Then it is sufficient to show that this is the Gauss norm with respect to the basis $\{e_i^*\}$. In other words, we need to check \[|\sum_{i}a_i y_i|=\max\{|a_i|\}.\]
  
  For simplicity, suppose $a_1$,...,$a_k$ reach the maximum norm $\max\{|a_i|\}$, we only need to show $|\sum a_j y_j|=\max\{|a_j|\}$, here $j$ runs from $1$ to $k$. Multiply a suitable power of $\pi$ (the fixed uniformizer of ${O_F}$) so that  each $a_j$ lies in $O_{F}^*$, then consider the reduction: \[\overline{\sum_{j}a_j y_j}=\sum_j (\overline{a_j}) \overline{y_j}.\] Since the reduction $y$ lies in the Drinfeld half-space, it can not lie in any $\F_q$-rational hyperplane, this reduction of the sum is nonzero. Thus $\sum_j a_j y_j$ has norm one, so $r^{Dr}(y)=o$.

 Now we return to the point $x$. Enlarge the test field $K$ if necessary so that $x$ is also a $K$-point. 
 
 We proceed as before, and suppose $x$ is $(x_1,...,x_{n-1})=[x_1:...:x_{n-1}:1]$, here for simplicity we use the inhomogeneous coordinate. This is possible because each $x_i$ is nonzero indeed. Now we  make the convention that $e_{n}^*$ is always norm 1. Then we can pick up a unique representative norm for each point in the  apartment $A$. Consider the Drinfeld retract $r^{Dr}(x)$, it sends the vector $\sum a_i e_i^*$ to $|\sum_{i<n} a_{i}x_i+1|$. Note that our conventions are compatible, the vector $e_n^*$ is surely norm 1 under this norm. Because $\{e_i^{*}\}$ is an adapted basis, so we know that $r^{Dr}(x)$ is determined by the following $n-1$-tuple: \[(r^{Dr}(x)(e_1),...,r^{Dr}(x)(e_{n-1}))=(|x_1|,...,|x_{n-1}|).\]
 
 On the other hand, the  retraction map for $x$ can also be  computed by the apartment retraction map $r_{A,o}$. Now suppose $x=t.y_{0}$ with $y_0 \in X(T,\cL)^s(O_K)$, and  suppose $t=(t_1,...,t_{n-1},1)$ and $y_0=[y_1:..y_{n-1}:1]$. By previous observation, each $y_i$ is norm 1 element. Then we have \[|x_i|=|t_i y_i|=|t_i|.\] Therefore the apartment retraction map $r_{A,o}$ sends $x$ to the norm  on $V^*$  corresponding to the $n-1$-tuple $(|t_1|,...,|t_{n-1}|)$, which means exactly $r^{Dr}(x)=r(x)$. We are done.

 \end{proof}
\begin{rem}
In the Drinfeld case, Werner constructed an extension of the retraction map $r: \Omega^n\ra \B(G, F)$ to a continuous map $\ov{r}: \mathbb{P}^{n-1,\Berk}\ra \ov{\B_t(G, F)}$, for which the embedding $\ov{\B_t(G, F)}\hookrightarrow \mathbb{P}^{n-1,\Berk}$ is a section, cf. \cite{werner2004} section 6.  In the setting of Theorem \ref{retract}, it may be possible to extend the retraction map $r$ to a continuous map $\ov{r}: X^\Berk\ra \ov{\B_t(G, F)}$ for which the the embedding $\ov{\B_t(G, F)}\hookrightarrow X^{\Berk}$ is a section. Since we do not need it in the following, we will not do this here.
\end{rem}
 
\subsection{$p$-adic period domains and tropical geometry}

We conclude this section by making some analogy with tropical geometry, which offers a possible new aspect on the $p$-adic period domains $X^{b_0}$.
Intuitively, it is better to think  about the retraction map $r$ in a reverse way. Here we sketch how to imagine its fiber geometrically. In spirit this is close to the method in \cite{vos2000}.
 
For a special vertex $z \in \B(G,F)$ (otherwise we  need  to use base change to make it become special vertex), we consider its fiber \[r^{-1}(z)=Y_{z}\subset X^s\] 
under the retraction map $r: X^s\ra \B(G,F)$. 
Such $z$ will give  us an integral model $G_z$ for $G$ and enable us to upgrade everything into integral level. In particular, we can consider the reduction map (the flag scheme is proper). For any test field $K$, through the reduction  we get a map \[X(K)=\Fl(G,\mu)(K)=\Fl(G_z,\mu_z)(O_K)\longrightarrow \overline{\Fl(G,\mu)}(\widetilde{K}),\] then $r^{-1}(z)(K)$ is in fact the lift of semistable (= stable) locus over the residue fields. This fiber is closed in the whole semistable locus, it is a finite union of affinoids with the same dimension as the flag variety.

Such fiber-wise thoughts produce some interesting observations. For any apartment $A$, let $T$ denote the resulting maximal split torus over $F$. Let us  restrict to the slice $r^{-1}(A)\subset X^s$ (inverse image over $A$):\[r^{-1}(A)\longrightarrow A\cong \R^{n-1}.\] An amazing thing is that both  $r^{-1}(A)$ and $A$ have an extra $T(K)$ action. The latter one is through the translation, and may think it acts through the identification of $A$ with the base changed apartment inside $\B(G,K)$. But the huge group $T(K)$ can also acts on $r^{-1}(A)$. Of course $T(K)$ can not act on the semistable locus or the Bruhat-Tits building. But inside this slice $r^{-1}(A)$, the $T(K)$ action can even identify different fiber (keeping analytic structure). Moreover, if the field $K$ satisfies $v(K^*)=\R$, then its action on the apartment is transitive. Therefore each fiber is isomorphic to each other. The above map looks like a ``fiberation". The Bruhat-Tits building is a union of apartments, so the whole retraction map also looks like a ``fiberation".

In a certain sense, this picture has some analogy with tropical geometry.
Consider the $n$-dimensional torus $\mathbb{G}_{m}^n$ (as a  Berkovich space), through the valuation, we get a continuous map \[r: \mathbb{G}_{m}^n \longrightarrow \R^{n},\] which has a skeleton section \[s:\R^n \longrightarrow \mathbb{G}_{m}^n\] sending a vector to the corresponding generalized Gauss point.  More generally, for other  Berkovich space $Y$, we may consider its map to $\mathbb{G}_m^n$, then composite with $r$ to get a subset $Im(Y)$ inside $\R^n$ and this set is called tropical image. These ideas (tropical image, skeleton section etc) are widely used in tropical geometry and other related fields. For example  see \cite{ducros2012}, \cite{tropical2017}, \cite{rabin2012}, \cite{werner2016} and so on. Although our setting is a little different from the usual cases in tropical geometry, there people usually use one formal (or integral) model, while the Bruhat-Tits building will provide ``variation" of integral models for $G$.

In the case of Drinfeld spaces, two pictures are perfectly compatible. For example, take a maximal split torus $T$, so we can take homogenous coordinate for the  projective space, then this induces an open embedding \[\Omega ^{n} \hookrightarrow \mathbb{G}_m^{n-1}\] through the inhomogenous coordinate (we have $n$-choices). Then the tropical map from $\Omega ^{n}$ to $\R^{n-1}$ is exactly the apartment retraction map $r_{A}$. And the skeleton section map is the Berkovich map. Moreover, in \cite{werner2011}, Werner introduced a tropical viewpoint on the Bruhat-Tits building and its compactification. Combining these ideas together, it may be possible to use tropical geometry (combined with the Berkovich map and the retraction map) to study the semistable locus $X^{ss}=X^s$.

Finally, we consider $X^{b_0}$. An important  question in $p$-adic Hodge theory is to understand the difference $X^{ss}\setminus X^{b_0}$. This object is mysterious, for example, it does not have any classical points. The previous discussion about the semistable locus is harder for $X^{b_0}$. For  example, the intersection \[X^{b_0}\cap r^{-1}(z)\] ($z$ is a special vertex) is mysterious, it is not a finite union of affinoids. The intersection broke the affinoid property, then the construction of formal model in \cite{pv1992} does not work for $X^{b_0}$. Moreover, since  $X^{b_0}$ shares the same classical points with $X^{ss}$, its reduction through $z$ to mod $p$ points will be all semistable points. 
%There is no ``admissible locus" (or Newton stratification) for the flag variety over the residue field.  This coincides with the intuition that $X^{b_0}$ is cutting out by some purely $p$-adic ``transcendent" method. And the fiber-wise ideas may not work. Because $T(K)$ action may break the admissible locus. However, this locus has an $G(F)$-action. Thus instead of points, 
On the other hand, we can look at the subspace $r^{-1}(\Delta) \subset X^{ss}$ over any maximal dimensional simplex. It is also a finite union of  affinoids. Even better, it is $F$-affinoids. The simplicial decomposition \[X^{ss}=\bigcup_{\triangle} r^{-1}(\triangle)\] induces a similar decomposition for $X^{b_0}$: we have \[X^{b_0}=\bigcup_{\triangle} \Big(X^{b_0} \cap r^{-1}(\triangle)\Big)=\bigcup_{g \in G(F)} g( X^{b_0}\cap \triangle_0),\] here we take a standard simplex $\triangle_0$. Therefore it reduces the problem of studying $X^{b_0}$ to study a slice \[X^{b_0}\cap r^{-1}(\triangle_0).\] The fiber $r^{-1}(\triangle_0)$ is a finite union of $F$-affinoids and is closely related with period domain (semistable locus) over the residue field $\F_q$, which may  be simpler.

 \section{Translations to the de Rham side}\label{section de Rham}
 In this section, we briefly explain how to translate the previous constructions and results to the de Rham setting as in the beginning of the introduction, i.e. the setting as in \cite{RZ96} Chapter one. Roughly speaking, we
 get similar results for the Bruhat-Tits building $\B(G_b, F)$ and the 
 admissible locus $\Fl(G,\mu^{-1},b)^\adm$ with $b=b_0\in B(G,\mu^{-1})$ basic. Here $G_b$ is the reductive group over $F$ defined by the $\sigma$-centralizers of $b$, cf. \cite{RR} 1.11 and \cite{kott1997} 3.3 (where it is denoted by $J_b$) and \cite{FS} III.4.1. As we assume $b$ is basic, $G_b$ is an inner form of $G$ over $F$.
 
 Starting with a basic local Shimura datum $(G, \{\mu^{-1}\}, b)$, we get a tower of rigid analytic spaces \[\Big(\M(G,\mu^{-1},b)_K\Big)_K\] over $\breve{E}$, where $K$ runs over the set of open compact subgroups of $G(F)$ and $E$ is the reflex field $E=E(G, \{\mu^{-1}\})$ as before. These rigid analytic spaces $\M(G,\mu^{-1},b)_K$ are called the associated local Shimura varieties.
 Let \[\Fl(G,\mu^{-1},b)^\adm\subset \Fl(G,\mu^{-1})^\ad_{\breve{E}}\] be the admissible locus as defined in \cite{sw2020}, which is the open Newton stratum for the Newton stratification on $\Fl(G,\mu^{-1})^\ad_{\breve{E}}$ as in \cite{cs2017, cfs2021}. Then there are \'etale morphisms (the de Rham period morphisms) of adic spaces over $\breve{E}$ \[\pi_{\dR, K}:   \M(G,\mu^{-1},b)_K\lra \Fl(G,\mu^{-1},b)^\adm,\]  which are surjective. Consider the local Shimura variety at infinite level \[\M(G,\mu^{-1},b)_\infty=\varprojlim_K\M(G,\mu^{-1},b)_K^\Diamond,\] defined as the inverse limit of the associated diamonds $\M(G,\mu^{-1},b)_K^\Diamond$ of $\M(G,\mu^{-1},b)_K$. The diamonds $\M(G,\mu^{-1},b)_K^\Diamond$ and $\M(G,\mu^{-1},b)_\infty$ are moduli spaces of certains $p$-adic $G$-shtukas, cf. \cite{sw2020} Lectures 23 and 24.
 There is the Hodge-Tate period morphism (cf. \cite{sw13, cs2017, cfs2021})
 \[\pi_{\mathrm{HT}}:  \M(G,\mu^{-1},b)_\infty\lra \Fl(G, \mu)^{b, \Diamond}_{\breve{E}},\]
 where \[\Fl(G, \mu)^{b}\subset \Fl(G, \mu)^{\ad}\] is the open Newton stratum of the adic space $\Fl(G, \mu)^{\ad}$ over $E$ studied in subsection \ref{subsection Newton and HN}. 
 
 Consider also the basic local Shimura datum $(G_b, \{\mu\}, b^{-1})$, which is the dual local local Shimura datum of $(G, \{\mu^{-1}\}, b)$ in the sense of \cite{sw2020} Corollary 23.3.2. Then one has a natural $G(F)\times G_b(F)$-equivariant isomorphism \[\M(G,\mu^{-1},b)_\infty\cong \M(G_b,\mu,b^{-1})_\infty \] of locally spatial diamonds over $\mathrm{Spd}\,\breve{E} =(\Spa\,\breve{E})^\Diamond$, cf. \cite{sw2020} Corollary 23.3.2.
 Moreover, we have natural isomorphisms of adic spaces over $\breve{E}$ (cf. \cite{shen2023} section 4):
 \[\Fl(G,\mu^{-1},b)^\adm\cong \Fl(G_b, \mu^{-1})^{b^{-1}}_{\breve{E}}, \quad  \Fl(G, \mu)^{b}_{\breve{E}}\cong \Fl(G_b,\mu,b^{-1})^{\adm}. \]
 These fit into a twin towers diagram 
 using the de Rham and Hodge-Tate period morphisms that allow us to collapse each tower on its base (cf. \cite{Fal02, FGL08, sw13, sw2020, cfs2021, shen2023})
 $$
 \begin{tikzcd}[row sep=large,column sep=large]
 	\M (G,\mu^{-1},b)_\infty \ar[d,"\pi_{dR}", two heads]\ar[r,"\sim"]  \ar[rd,"\pi_{HT}" description]
 	\ar[d, dash, dotted , bend right=40, start anchor={[xshift=-12mm]}, end anchor={[xshift=-12mm]}, start anchor={[yshift=3mm]}, end anchor={[yshift=-3mm]},"G(F)"']
 	& \M (G_b,\mu,b^{-1})_\infty \ar[d,"\pi_{dR}", two heads] 
 	\ar[ld,"\pi_{HT}" description]    \ar[d, dash, dotted, bend left=40, start anchor={[xshift=12mm]}, end anchor={[xshift=12mm]}, start anchor={[yshift=3mm]}, end anchor={[yshift=-3mm]},"G_b(F)"]
 	\\
 	\Fl(G,\mu^{-1},b)^{\adm,\Diamond} & \Fl(G_b,\mu,b^{-1})^{\adm,\Diamond}   
 \end{tikzcd}
 $$
 
 Now we can translate our previous constructions and results as follows:
 \begin{itemize}
 	\item (Theorem \ref{thm BT vs p-adic period}) The natural Berkovich map similarly constructed as before by R\'emy-Thuillier-Werner \[\theta: \B(G_b, F)\lra \Fl(G_b, \mu^{-1})_{\breve{E}}^\Berk=\Fl(G, \mu^{-1})_{\breve{E}}^\Berk\] factors through the admissible locus $\Fl(G,\mu^{-1},b)^{\Berk,\adm}$, i.e. \[\theta(\B(G_b, F))\subset \Fl(G,\mu^{-1},b)^{\Berk,\adm}.\]
 	In terms of moduli, this means that from a point of the building $\B(G_b, F)$, one can construct a $p$-adic $G$-shtuka (up to isogeny) in the sense of \cite{sw2020}. Note that here the base field $\breve{E}$ still satisfies the functoriality assumption of  \cite{rtw2010} 1.3.4.
 	\item (Theorem \ref{thm boundaries}) Let $\dot{b}$ be a representative of $b$ and $s$ an integer such that $s\nu_{\dot{b}}$ factors through $\G_m$ and  $E_s=E\cdot F_s$ with $F_s|F$ the unramified extension of degree $s$ (see \cite{RZ96} pages 8-9).  Then the space $\Fl(G,\mu^{-1},b)^{\adm}$ is defined over $E_s$ (this is an exercise by the construction of the admissible locus) and we have $G_{b,F_s}\cong G_{F_s}$ (cf. \cite{RZ96} Corollary 1.14). Consider the Berkovich map \[\theta: \B(G_b, F)\lra \Fl(G_b, \mu^{-1})_{E_s}^\Berk=\Fl(G, \mu^{-1})_{E_s}^\Berk.\] By the above, we have \[\theta(\B(G_b, F))\subset \Fl(G,\mu^{-1},b)^{\Berk,\adm}.\] Consider the closure \[\ov{\theta(\B(G_b, F))}\]  of $\theta(\B(G_b, F))$ inside $\Fl(G, \mu^{-1})_{E_s}^\Berk$. Then there exists a natural description of the boundary strata of $\ov{\theta(\B(G_b, F))}$ in terms of  $\B(M, F)$ with $M$ the $F$-rational proper Levi subgroups of $G_b$. Moreover, each boundary stratum $\B_\tau(M, F)$  is contained in a uniquely determined non basic Newton stratum.
 	\item (Section \ref{retraction}) Consider the basic local Shimura datum 
 	\[(D^\times, \{\mu\}, b^{-1}),\] where $D$ is a division algebra over $F$ of invariant $\frac{d}{n}$ with $(d, n)=1$, and $\mu=(1^{d}, 0^{n-d})$. Then $G_{b^{-1}}=\GL_n$ and the dual local Shimura datum is $(\GL_n, \{\mu^{-1}\}, b)$. Moreover, 
 	 \[\Fl(D^\times, \mu, b^{-1})^\adm=\Fl(\GL_n, \mu)^{b}\] is the $p$-adic period domain studied in section \ref{retraction} after base change to $\breve{E}$.  
 	We have \[\theta(\B(\GL_n, F))\subset \Fl(D^\times, \mu, b^{-1})^{\Berk,\adm}.\] Moreover, we have a continuous retraction map
 	\[r:  \Fl(D^\times, \mu, b^{-1})^{\Berk,\adm}\lra \B(\GL_n, F),\]
 	generalizing the case of Drinfeld for $d=1$.  Here, one can first construct the associated formal Rapoport-Zink space \[\widehat{\M},\] which is a moduli space of $p$-divisible groups with certain $O_D$-action together with a rigidification (cf. \cite{RZ96}), then one can construct the 
 	local Shimura varieties $\M_K$ as the \'etale coverings of the rigid analytic generic fiber of $\widehat{\M}$. In this case, the associated admissible locus $\Fl(D^\times, \mu, b^{-1})^{\adm}$, together with its \'etale coverings $\M_K$ and the isomorphism between the associated twin towers, were studied  in \cite{Fal10}. In particular, one sees that the whole picture is quite similar to the Drinfeld case, although the retraction map $r:  \Fl(D^\times, \mu, b^{-1})^{\Berk,\adm}\lra \B(\GL_n, F)$ is more complicated.
 \end{itemize}

Finally, we discuss some open problems. Back to the Hodge-Tate setting as in section \ref{newtonstrata}. Recall that we have the closed subspace $\B_t(G,F)\subset \Fl(G,\mu)^{\Berk,b}$ defined as the image of the building $\B(G,F)$ under the Berkovich map. Without loss of generality, assume that $\{\mu\}$ is non-degenerate, so that $\B(G, F)\cong \B_t(G,F)$, which we simply identify. Let \[\B(G, F)^\ad\subset \Fl(G, \mu)^{\ad,b}\] be the inverse image of $\B(G, F)= \B_t(G,F)$ under the quotient map\footnote{Note here we can replace the $p$-adic period domain by the whole flag variety.} $\Fl(G, \mu)^{\ad,b}\ra \Fl(G,\mu)^{\Berk,b}$. This is a generalizing closed subset of $|\Fl(G, \mu)^{\ad,b}|$, thus it defines a closed locally spatial diamond \[\B(G, F)^\Diamond\subset \Fl(G, \mu)^{\ad,b, \Diamond}\] such that \[|\B(G, F)^\Diamond|=\B(G, F)^\ad.\]  The diamond $\B(G, F)^\Diamond$ has a well defined \'etale site, cf. \cite{Sch17}. This arises a natural question to study its \'etale cohomology. More precisely, let $\ell\neq p$ be another prime, and $\Lambda\in \{\F_\ell, \Z_\ell, \Q_\ell\}$, the \'etale cohomology with compact support (\cite{Sch17})
\[H^\ast_{c}(\B(G, F)^\Diamond_{\ov{E}}, \Lambda) \]
form natural $\Lambda$-representations of $\tr{Gal}(\ov{E}/E)\times G(F)$. The question is  what are these representations? The classical compact support cohomology of the topological building $H^\ast_c(\B(G,F), \C)$ had been computed explicitly by Borel-Serre in \cite{BS76}. There only $H^d_c(\B(G,F), \C)\neq 0$ with $d=\dim\,\B(G,F)$, in which case it is the Steinberg representation of $G(F)$. Here the situation is much more complicated, as the \'etale cohomology involves the Galois cohomology of large (transcendent degree) non-archimedean fields $\mathcal{H}(\theta(x))$ for $x\in \B(G,F)$.

Similarly, one may study the derived category of $G(F)$-equivariant \'etale sheaves (\cite{Sch17}) \[D_{\tr{et}}(\B(G, F)^\Diamond, \Lambda).\] In the topological setting, there  is the work of Schneider-Stuhler \cite{ss1997}, in which one transforms smooth representations of $G(F)$ to sheaves (or coefficient systems) on the building $\B(G, F)$ and can deduce several interesting results on representations. In \cite{Schneider}, Schneider further studied Verdier duality of constructible sheaves on $\B(G, F)$. Inspired by the recent work of Fargues-Scholze \cite{FS}, it seems reasonable to study certain class of \'etale sheaves and duality on $\B(G, F)^\Diamond$, and deduce representation theoretic consequences.  We leave these considerations to future works.

%\bibliographystyle{plain}
%\bibliography{reference}

\end{document}